\newtheorem{thm}{Theorem}
\newtheorem{lem}{Lemma}
\newtheorem{Def}{Definition}
\newtheorem{Prop}{Property}
\newtheorem{cor}{Corollary}
\newtheorem{rem}{Remark}
\def\R{\mathbb R}
\def\Z{{\mathbb Z}}
\def\N{{\mathbb N}}
\newcommand{\pt}[1]{\left( #1 \right) }
\newcommand{\ptg}[1]{\left\{ #1 \right\} }
\newcommand{\ptq}[1]{\left[ #1 \right] }
\newcommand{\ptqt}[1]{\left[ #1 \right) }
\newcommand{\abs}[1]{\left| #1 \right| }
\newcommand{\norm}[1]{\left\| #1 \right\| }
\newcommand\dist[2]{\mathrm{d}\pt{#1,#2}}
\def\supp{\operatorname{supp}}
\newcommand{\Lprod}[2]{ \langle #1, #2\rangle }
\def\phipkn{\varphi_{j,k,\lambda}}
\def\F{\operatorname{F}}
\def\Phip{\Phi_{j,k}}
\def\pii{2\pi \mathrm{i}\,}
\def\bap{\beta\pt{j}}
\def\varb{j,k,\lambda\in\Gamma}
\def\Dil{\operatorname{D}}%Dilatazione
\def\bap{2^j}
\def\bas{2^{\sigma}}
\def\varb{j,k,\lambda\in\Gamma}
\def\ud{\mathrm{d}}
\def\varpb{\varphi_{\bullet}}
\renewcommand{\phi}{\varphi}
\def\sinc{\operatorname{sinc}}
\begin{document}
\begin{abstract}
  We construct a family of 
  frames describing Sobolev norm and Sobolev seminorm of the space $H^s(\R^n)$.
  Our work is inspired by the Discrete Orthonormal Stockwell Transform
  introduced by R.G. Stockwell, which provides a time-frequency localized version of Fourier basis of $L^2([0,1])$.
  This approach is a hybrid between Gabor and Wavelet frames.
  We construct explicit and computable examples of these frames, discussing their properties.
\end{abstract}
\vspace{1em}
%%%%%%%%%%%%%%%%%%%%%%%%%%%%%%%%%%%%%%%%%%%%%%%%%%%%%%%%%%%%%%%%%%%%%%%%%%%%%%%%%%%%%%%%%%%%%%%%%%%%%%%%%%%%%%%%%%%%%%%%%%%%
%%%%%%%%%%%%%%%%%%%%%%%%%%%%%%%%%%%%%%%%%%%%%%%%%%%%%%%%%%%%%%%%%%%%%%%%%%%%%%%%%%%%%%%%%%%%%%%%%%%%%%%%%%%%%%%%%%%%%%%%%%%%
%title

\title{Stockwell-Like Frames for Sobolev Spaces}

\author{Ubertino Battisti, Michele Berra, Anita Tabacco}
\address{Dipartimento di Scienze Matematiche,
Politecnico di Torino, corso Duca degli Abruzzi 24, 10129 Torino,
Italy}
\email{ubertino.battisti@polito.it}
\address{Dipartimento di Scienze Matematiche,
Politecnico di Torino, corso Duca degli Abruzzi 24, 10129 Torino,
Italy}
\email{michele.berra@polito.it}
\address{Dipartimento di Scienze Matematiche,
Politecnico di Torino, corso Duca degli Abruzzi 24, 10129 Torino,
Italy}
\email{anita.tabacco@polito.it}

\maketitle
  \section{Introduction}
%%%%%%%%%%%%%%%%%%%%%%%%%%%%%%%%%%%%%%%%%%%%%%%%%%%%%%%%%%%%%%%%%%%%%%%%%%%%%%%%%
%%%%%%INTRODUZIONE%%%%%%%%%%%%%%%%%%%%%%%%%%%%%%%%%%%%%%%%%%%%%%%%%%%%%%%%%%%%%%%
%%%%%%%%%%%%%%%%%%%%%%%%%%%%%%%%%%%%%%%%%%%%%%%%%%%%%%%%%%%%%%%%%%%%%%%%%%%%%%%%%
%A classical problem in time frequency analysis is the characterization of
%smoothness  spaces via suitable frames. The leading idea is that
%the smoothness or regularity must be characterized via decay or sparsity properties of
%associated discrete expansions. 
%{\color{red} MIK: 
\noindent One of the most challenging problems in modern mathematics is to represent key properties of signals using discretization techniques connected with numerical methods. Among these properties,  regularity (or smoothness) plays a very important role both in classical and applied mathematics. Therefore, it is interesting to characterize regularity function spaces,  via suitable frames. The leading idea is that
the smoothness or regularity must be characterized via decay or sparsity properties of
associated discrete expansions. \\
For example, it is well known that $H^s([0,1])$ can be characterized by the decay properties 
of the Fourier coefficients. Similarly, Sobolev spaces $H^s(\R)$, and more generally inhomogeneous Besov spaces,
can be described using suitable wavelets expansions, see for example \cite{ME92}. 
See \cite{BN07,CD02,FE89,FG85,FE85,Fo07,GR92,SCD02} for other discrete time-frequency representations of functional spaces.
In \cite{BB15},  the Stockwell frames are introduced associated to different partitions of
the real line, which are connected to the notion of $\alpha$-partitioning, see e.g. \cite{Fo07}. 
In this paper we  focus on the
Stockwell frames associated to a dyadic partition of the frequency domain. In order to give a clean introduction to our frame, we use 
a particular partition in dimension $d=1$, while later
we generalize to a wider set of admissible ones in arbitrary dimension. \\
Consider the system of functions
 \begin{equation}
 \begin{split}
    \phipkn(t)&=T_{\frac{\lambda }{\bap}}  \pt{  \frac{1}{ 2^{j/2} } \sum_{\eta \in Z_{j,k}} 
    e^{\pii \eta t } \varphi(t)},\quad  j\in \N, k=\pm, \lambda \in \nu \Z\\
   \label{eq:stframe} 
    \phi_{\bullet,\lambda}&=   T_{\lambda} \varphi(t), \quad \lambda \in \nu \Z
 \end{split}
 \end{equation}
 where $Z_{j,+}= 2^{j}, \ldots, 2^{j+1}-1$ and $Z_{j,-}=-2^{j+1}+1, \ldots, -2^{j}$.\medskip\\
 The first aim of the paper is to study in detail the system of functions \eqref{eq:stframe} 
 and to establish conditions so that this system is a frame characterizing Sobolev norms. We also generalize~\eqref{eq:stframe} to arbitrary dimension.
In Theorem \ref{thm:fram}, we prove that if $\varphi$ satisfies the conditions of
 Definition \ref{def-sadm} for a certain $s\geq 0$ then the system of functions \eqref{eq:stframe}, for $\nu$ small enough, 
 is a frame of $L^2(\R)$ which characterize the $H^s$-Sobolev norm, 
 that is
 \begin{align}
   \label{eq:framsob}
   A\norm{f}^2_s \leq\sum_{\lambda\in\nu\Z}|\Lprod {f}{\phi_{\bullet,\lambda}}|^2  + 
   \sum_{j\in \N, k=\pm, \lambda \in \nu \Z}2^{2js}\abs{\Lprod{f}{\phipkn}}^2  \leq B\norm{f}^2_s.
 \end{align}
 The requirements of Definition \ref{def-sadm}  involve the decay and non vanishing properties of suitable linear combinations
 of translated of the Fourier transform 
  $\widehat{\phi}$. 
 Given $s\geq 0 $, it is not difficult to find functions which satisfy Definition \ref{def-sadm}.
 For example the Gaussian works for all $s$. 
 In Theorem \ref{thm:condsuff}, we provide natural sufficient conditions on
 ${\phi}$ so that it defines a frame describing the $H^s$-norm. 
 In Section \ref{sec:ex},
 we give some explicit examples and, in a particular case, we show that the system of functions \eqref{eq:stframe}
 is indeed an orthonormal basis of $L^2(\R)$, characterizing all Sobolev spaces.

In some cases, approximating the norm is not enough to capture the regularity properties of a signal and a finer analysis is needed. In this perspective, the analysis of the Sobolev's seminorms is a natural task to address with frames.  
As we mentioned before, wavelet frames, under suitable conditions, can describe the $H^s$-norm. 
 Actually, in several cases, it is possible to prove that wavelets characterize the $H^s$-seminorm as well, see \cite{ME92} and references therein.
 It is therefore an interesting question to determine conditions under which the Stockwell frame 
 describes not only the $H^s$-norm, but 
 also the $H^s$-seminorm, which is a key tool
  in order to describe Besov spaces and other interpolation between Sobolev spaces.
 In Theorem \ref{thm:mainsemi}, we prove a seminorm characterization using a frame arising from \eqref{eq:stframe}. \\ We set $\phipkn(t)$ as in \eqref{eq:stframe} and we add (for negative values of $j$)
 \begin{align}
%   \nonumber
%   \phipkn(t)&=T_{\frac{\lambda }{\bap}}  \pt{  \frac{1}{ 2^{j/2} } \sum_{\eta \in Z_{j,k}} 
%    e^{\pii \eta t } \varphi(t)},\quad  j\in \N, k=\pm, \lambda \in \nu \Z\\
   \label{eq:stframsemi}
   \phi_{-j,k,\lambda}(t)&= T_{\lambda \bap} \Dil_{2^{2j}} \phi_{j,k,0}(t), \quad  j\in \N\setminus \ptg{0}, k=\pm, \lambda \in \nu \Z
 \end{align}
where $\Dil_a(f)(\cdot)= \frac{1}{\sqrt{a}} f\pt{ \frac{\cdot}{a} }$ is the usual $L^2$-dilation operator.
We prove that if $\phi$ satisfies the conditions of Definition \ref{def-sadmsemi} then 
\[
  A\abs{f}^2_s \leq 
   \sum_{j\in \Z, k=\pm, \lambda \in \nu \Z}2^{2js}\abs{\Lprod{f}{\phipkn}}^2  \leq B\abs{f}^2_s.
\]
As expected, the conditions of Definition \ref{def-sadmsemi} are stronger than those of the $H^s$-norm
characterization. Nevertheless, it is possible to provide explicit examples of frames describing the $H^s$-seminorm, see Section \ref{sec:ex}.
% The condition of $s$-admissibility for the $H^s$-seminorm
% is actually linked to the existence of so called \emph{interpolatory functions}, precisely $f$ so that 
% $f(0)\neq 0$ and $f(m)=0$ for all $m\in \Z\setminus \ptg{0}$. This kind of functions is a key tool in the construction of
% Daubechies wavelets, \cite{DA88}. In \cite{ME92}, these functions are called \emph{Lagrangian Spline}.

 The frames \eqref{eq:stframe} and \eqref{eq:stframsemi} can be extended in a natural way to 
 the $n$-dimensional case.
The definition  is truly multidimensional, in the sense that the frame elements are, in general, not tensor product 
of $1$-dimensional frames. Indeed, in Section~\ref{sec:normHs} we show how to construct admissible
partitions in $\R^d$. The frame results we stated for the Sobolev norms and semi-norms in the one dimensional case 
are actually valid in 
arbitrary dimension.

It is worth to explain why we choose \eqref{eq:stframe} as candidate to be a $L^2$-frame,
and why the dyadic partition should be the correct one to describe Sobolev Spaces.

In \cite{BR16}, it is proven that the so called DOST-functions, 
\begin{equation}
\begin{split}
  P_{j,k,\tau}(t)&= T_{\frac{\tau}{2^{j}}}\frac{1}{2^{j/2}} \sum_{j\in Z_{j,k}}  e^{\pii \eta t}, \quad j\in \N, \tau=0, \ldots, 2^{j}-1, k=\pm, \quad 
  P_{\bullet}(t)=1
  \label{eq:dost}
\end{split}	
\end{equation}

form an orthonormal basis of $L^2([0,1])$ which is time-frequency localized in terms of the Donoho-Stark uncertainty principle \cite{DS89}. 
The DOST functions \eqref{eq:dost} were first introduced in \cite{ST07} as a discretization of the $S$-transform, 
defined in \cite{ST96}. In \cite{WA09}, the DOST functions were studied in order to obtain a FFT-fast algorithm able to compute the related coefficients, that is the scalar product $\Lprod{f}{P_{j,k,\tau}}$.

The naive idea of the definition of the Stockwell frames comes from the interpretation of Gabor frames
\[
  T_{\lambda} \pt{e^{\pii m t} \phi(t)}, \quad \lambda  \in \nu \Z, m\in \Z
\]
as the uniform translation of $\ptg{e^{\pii mt}}_{m\in \Z}$, the usual Fourier basis of $L^2([0,1])$, localized via the window function $\phi$.
Using the DOST basis instead of the  Fourier one,  and a natural non-stationary translation we are led to
\begin{align}
  \nonumber
     T_{\frac{\lambda }{\bap}} \pt{ P_{j,k,0}(t) \phi(t)}, \quad  
\textrm{ and }\quad       T_{\lambda} P_{\bullet}\varphi(t)=  T_{\lambda} \varphi(t), \quad j\in \N, k=\pm, \lambda \in \nu \Z
\end{align}
which is exactly the system of functions defined in \eqref{eq:stframe}.
Notice that the non-stationary translation is related to the frequency parameter $j$. Roughly, we refine the space translation as the frequency increases. \\ 
Since Sobolev spaces are naturally associated to a dyadic partition of the frequency domain, we used the same approach for our frequency tiling.  \medskip\\
 The time frequency method which is behind the construction of Stockwell frames is the $S$-transform.
 The $S$-transform has been  studied in several papers
 both from an applied point of view (see \cite{DR09, ZH14}) and a theoretical one (see \cite{WO10,RW13}); it can be seen as an hybrid between the Continuous Wavelet Transform and the 
 Short Time wavelet transform, see \cite{GLM06,VE08}. 
 This \emph{hybrid} behavior can be described also in terms of representation theory,
 indeed, the Continuous Wavelet Transform hides a representation of the affine group and the Short Time Fourier Transform is related to
 the Heisemberg group, while the $S$-transform is essentially represented by the affine Weyl Heisemberg group, see
 \cite{RI14,RW13} and also \cite{DA08,kalisa1993n} for other applications of the affine Weyl Heisember group.
 
This \emph{hybrid} nature is the reason why the Stockwell frames shares several properties of Gabor frames
  and of wavelets frames. Indeed, our proof are much closer to the Gabor frames techniques, while the time frequency partitioning and the space characterization properties are closer to the wavelets approach.
  Compared with wavelet theory, 
  we do not use dilations to increase localization in time,
  while sums of translates in the frequency domain. In same cases the two approaches are very similar,
  for instance if we compare the Stockwell-like frame with the $\sinc$ as window function and the Shannon wavelet.
  Other cases may look very different; moreover, with this approach we can use a wider class of window functions,
  for example the Gaussian, as we shall see in Section \ref{sec:ex}. 
%  \label{sec:int}

   \section*{Notations}
 We choose the following normalization for the Fourier Transform
  \[
   \pt{\F u}(\omega)=\int e^{-2 \pi i x\cdot \omega } u(x)dx.
  \]
  We denote with $\norm{\cdot}_s$ and $\abs{\cdot}_{s}$ respectively the $H^s$-norm and seminorm.\\
  We write 
  \[
    f\lesssim g,   
  \]
  is there exists a constant $C\in \R$ such that
  \[
   f\leq C\, g. 
  \]
  We write $f\sim g$ if $f\lesssim g$ and $g \lesssim f$.

  \section{Frames of $H^s(\R^d)$}
   \label{sec:normHs}
   
   \noindent 
   First, we define a dyadic frequency  tiling of $\R^d$; 
   then we construct  frames for the $H^s$-norm associated to admissible partitions.
   \subsection{Admissible partitions of $\R^d$}
   We state here the general definition in arbitrary dimension then we give some
   examples of admissible partitions. Similar concepts
   are used for example in \cite{FG85,Fo07}, and we refer also to the bibliography presented there.
      \begin{Def}
        \label{def:adm}
        The family $\ptg{\ptg{I_{j,k}}_{j\in \N, k\in K}, I_{\bullet}}$, where $K$ is a finite index set,
        is called admissible if
        \begin{itemize}
          \item [i)] $I_{j,k}$ and $I_{\bullet}$ are non empty connected subsets of $\R^d$; 
          \item [ii)] $\bigcup_{j\in \N, k\in K } I_{j,k} \cup I_{\bullet}= \R^d$;
          \item [iii)] $I_\bullet$ is a neighborhood of the origin and for each $j\in \N, k\in K$ $I_{j,k}\cap \Z^d\neq \emptyset$;
          \item [iv)] has the finite intersection property, that is there is $N\in \N$ such that 
          for each $\pt{\bar{j}, \bar{k}}$ there exist at most $N$ indexes $(j,k)$ such 
          that $I_{\bar{j},\bar{k}}\cap I_{j,k}\neq \emptyset$;
          \item [v)] There exists $c_{\min},c_{\max}>0$ such that for all $\omega\in   I_{j,k}$
          \[
                       c_{\min}<\frac{|\omega|}{2^j}<c_{\max}, \quad \mbox{uniformly in } j,k.
                     \]
                    where $|I_{j,k}|$ is the Lebesgue measure of $I_{j,k}$. 
        \end{itemize}
   \end{Def}
   \begin{rem}
   Essentially, an admissible partition covers $\R^d$ with no holes and
   with finitely many intersections and each set contains at least some integer. 
   Moreover, the elements of the set $I_{j,k}$ are comparable with the scaling $2^j$. 
   The index $k$ represents the direction in frequency and, by taking K to be finite, we choose to restrict our frame to a finite number of directions.	\medskip\\
   \end{rem}
   \paragraph{\textbf{Dimension $d=1$}}
   We set
   \begin{equation}\label{eq:setspl}
     \begin{split}	
     I_{j,+} &= \ptg{\omega\in\R, \omega\in \ptqt{\bap-\frac{1}{2}, 2^{j+1}-\frac{1}{2} }},
     \\ I_{j,-} &= \ptg{\omega\in\R, \omega
     \in \left(-2^{j+1}+\frac{1}{2}, -2^j +\frac{1}{2}\right]  },\\
     I_{\bullet} &=\pt{-\frac{1}{2},\frac{1}{2} }.	
   \end{split}
   \end{equation}
   Clearly this partition is admissible in the sense of Definition~\ref{def:adm}; it is worth to notice that with this particular choice we show in Section~\ref{sec:ex} that it is possible to define a Stockwell-like frame that 
   is also an orthonormal basis of $L^2(\R)$.
   Finally, we notice that in dimension $d=1$ we have just two possible directions, represented by $k = \pm$.\medskip\\
   \paragraph{\textbf{Dimension $d=2$}}
   In order to extend the case $d=1$, 
   let us work in polar coordinates. We consider balls of radius proportional to $2^j$ and we define eight possible directions, as shown in Figure \ref{F:part}.
   The precise definition is simpler if we use polar coordinates 
   \[\omega(\rho,\theta) = \pt{\rho \cos(\theta), \rho \sin(\theta)}.\]
   Set
   \begin{equation}\label{eq:setspl_d}
     \begin{split}	
     I_{j,k} &= \ptg{\omega\in \R^2 : 2^j -\frac{1}{2}\leq \rho <2^{j+1} -\frac{1}{2},\: \frac{k\pi}{4}\leq\theta<\frac{(k+1)\pi}{4} }, \quad k=0,\ldots,7
   \\  I_{\bullet} &=\ptg{\omega\in \R^2 : \rho <\frac{1}{2}},
   \end{split}
   \end{equation}
   This partition extends naturally to arbitrary dimension. 
   % \begin{rem}
   % We notice that the choice of eight directions is not accidental. Indeed, in order to fulfill $iii)$ of Definition~\ref{def:adm}, we need to have non empty intersections with $\Z^2$ for each set. If we take $j=0$, then the number of integers of modulo in $(-\frac{1}{2},\frac{1}{2})$ is exactly eight. Hence, taking more directions violates this requirement in dimension $d=2$. 
   % \end{rem}
   \begin{figure}
   \centering
   \begin{tikzpicture}[>=latex]
    \draw [dashed,domain=90:405] plot ({1.5*cos(\x)}, {1.5*sin(\x)});
     \draw [thick,domain=0:360] plot ({.5*cos(\x)}, {.5*sin(\x)});
     \draw [dashed,domain=90:405] plot ({3.5*cos(\x)}, {3.5*sin(\x)});
       \draw [thick,domain=45:90] plot ({1.5*cos(\x)}, {1.5*sin(\x)});
     \draw [thick,domain=45:90] plot ({3.5*cos(\x)}, {3.5*sin(\x)});
   \node at (.95,2.31) [left] {$I_{j,k}$};
   \node at (.95,2.31){.};
   \node at (0.65,0) [below] {\scriptsize $\frac{1}{2}$};
   \node at (1.65,0) [below] {\scriptsize $\frac{3}{2}$};
   \node at (3.65,0) [below] {\scriptsize $\frac{7}{2}$};
   \draw[dashed]  (.35,.35) -- (2.48,2.48);
   \draw[thick]  (1.05,1.05) -- (2.48,2.48);
   \draw[dashed]  (-.35,-.35) -- (-2.48,-2.48);
   \draw[dashed]  (.35,-.35) -- (2.48,-2.48);
   \draw[dashed]  (-.35,.35) -- (-2.48,2.48);
   \draw[thick]   (0,1.5) --(0,3.5);
   \draw[dashed]   (0.5,0) --(3.5,0);
   \draw[dashed]   (-.5,0) --(-3.5,0);
   \draw[dashed]   (0,-.5) --(0,-3.5);
   \draw[dashed]   (0,.5) --(0,1.5);
   \end{tikzpicture} 
   \caption{An example of admissible partition in $d=2$.}
   \label{F:part}
   \end{figure}
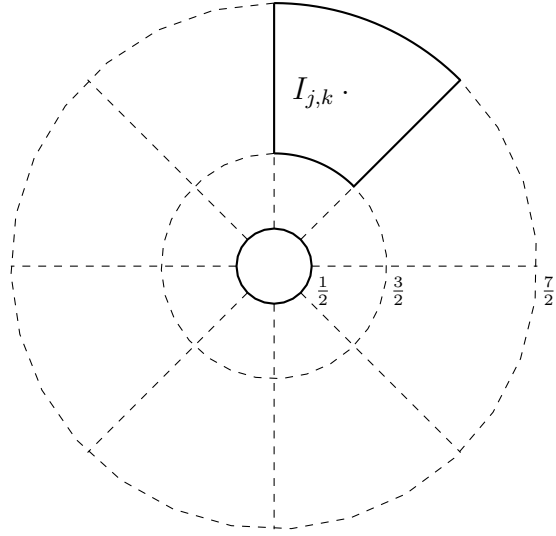

   \begin{figure}
     \centering
     \begin{minipage}{0.4\textwidth}
       \includegraphics[width=1\textwidth]{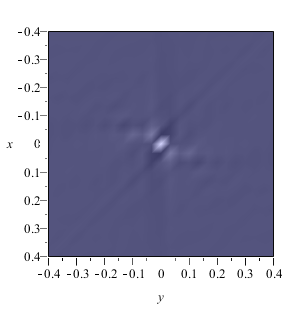}
     \end{minipage}
     %\hfill
     \begin{minipage}{0.4\textwidth}
       \includegraphics[width=0.87\textwidth]{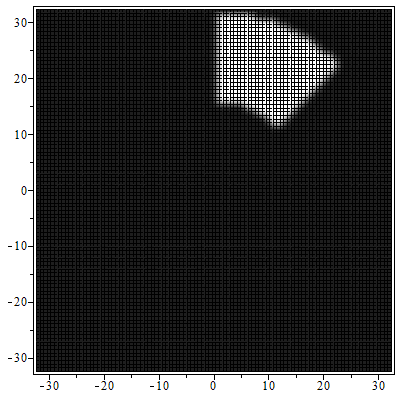}
     \end{minipage}
 
    \caption{A frame element obtained using the partition of Figure \ref{F:part}, the scale parameter is $j=4$
    the angular parameter is the same of Figure \ref{F:part}, that is $k=1$. As localizing window we consider
    $\phi(x,y)=\sinc^3(x)\cdot \sinc^3(y)$. On the left is plotted the real part of the frame element and on the right 
    the absolute value of its Fourier transform.
    }
    \label{Fig:2dframe}
   \end{figure}

   \subsection{Stockwell-like frames definition}
   Consider the functions $\varphi, \varpb \in L^2(\R^d)$, and the index set 
   \[
   \Gamma = \ptg{(j, k , \lambda) \mid j\in \N, k\in K, \lambda \in \nu\Z^d},
   \]
   with $\abs{K}<+\infty$ and an admissible partition $\ptg{\ptg{I_{j,k}}_{j \in \N, k \in K}, I_\bullet}$.
   %We set
   %\begin{equation}\label{eq:setspl}
   %  \begin{split}	
   %  I_{j,+} &= \ptg{\omega\in\R, \omega\in \pt{\bap-\frac{1}{2}, 2^{j+1}-\frac{1}{2} }}, \quad I_{j,-} = \ptg{\omega\in\R, \omega
   %  \in \left(-2^{j+1}+\frac{1}{2}, -2^j +\frac{1}{2}\right)  },\\
   %  I_{\bullet} &=\pt{-\frac{1}{2},\frac{1}{2} }.	
   %\end{split}
   %\end{equation}
   Then, the Stockwell frame as defined in \cite{BB15}, is 
   \begin{align*}
      \phipkn(t)=&T_{\frac{\lambda}{\bap}}  \pt{  \frac{1}{ 2^{jd/2} } \sum_{\eta\in Z_{j,k}} 
     e^{\pii \eta \cdot t } \varphi(t)}, & j,k,\lambda \in \Gamma,
   \\  
     \varphi_{\bullet,\lambda}(t) =& \varpb(t - \lambda),
   \end{align*}
    where $Z_{j,k}  = I_{j,k}\cap\Z^d$.
   Taking the Fourier transform yields
    \begin{align*}
      \widehat{{\varphi}_{\bullet,\lambda}} (\omega) =& e^{-\pii \omega\cdot \lambda} \widehat{\varpb}( \omega ) & \\
      \widehat{\phipkn}(\omega)=
      & e^{-\pii \omega\cdot  \frac{\lambda}{\bap}} \pt{  \frac{1}{ 2^{jd/2}} \sum_{\eta\in Z_{j,k}} 
      \widehat{\varphi}(\omega-\eta)}.\\
    \end{align*}
   % Let us define 
   % \begin{align*}
   %   \Phi_{\bullet}(\omega)&=\widehat{\varpb}(\omega), \quad &\\
   %   \Phi_{j,k}(\eta)&=\sum_{\eta\in Z_{j,k}} 
   %   \widehat{\varphi}(\xi-\eta).
   % \end{align*}
   We remark that the exact $L^2$-norm normalization constant for the frame elements
   is $\abs{Z_{j,k}}^{-1/2}$ rather than $2^{-jd/2}$. Since $\abs{Z_{j,k}}\sim 2^{jd}$
   we chose the latter for the sake of clarity.

   \medskip

   If $\varphi$ is a localization window, 
   the sum over the index set $Z_{j,k}$ determines, in the space domain, a higher localizations near the point
   $\frac{\lambda}{2^j}$ as  
   $j$ increases while, in the frequency domain, it implies that the frame element $\phi_{j,k,\lambda}$
   has a Fourier transform $\widehat{\phipkn}$ with the \emph{ major part} of the support in the set $I_{j,+}$.
   %see Figure \ref{Fig:TreGauss}.
   In Figure \ref{Fig:FrWin} we plot some frame elements with different windows $\phi$, 
   in the one dimensional case. 
   In Figure \ref{Fig:2dframe}, we plot a frame element and its Fourier transform in the $2$-d case using the partition
   of Figure \ref{F:part}.
   %As pointed out in the introduction, our frame is composed by an (adaptive) translation of two factors: a window $\varphi$ that ideally restricts our frame to $\ptq{-\frac{1}{2},\frac{1}{2}}$ and a sum of harmonics in the frequency band. This sum of Fourier exponents is responsible for oscillations and time concentration; see Figure \ref{Fig:FrWin} for a plot of these frame elements. 

   \begin{figure}
       \centering
       \begin{subfigure}[b]{0.4\textwidth}
   \includegraphics[width=.7\textwidth]{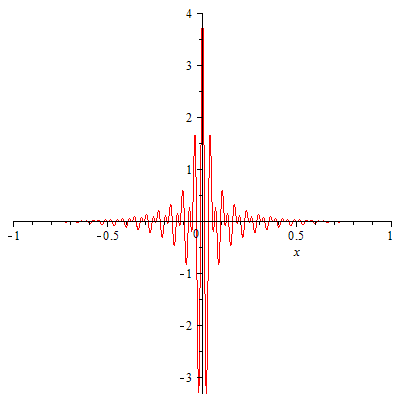}
           \caption{Time, $\phi_{4,+,0}(t) $ \\with $\phi(t)=\sinc^3(t)$}
       \end{subfigure}
       \begin{subfigure}[b]{0.4\textwidth}
           \includegraphics[width=.7\textwidth]{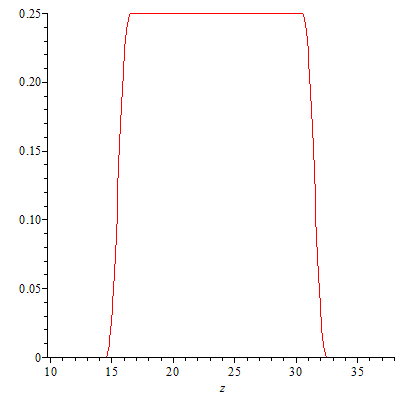}
           \caption{Frequency, $\widehat{\phi_{4,+,0}}(\omega)$ \\
           with $\phi(t) = \sinc^3(t)$}
       \end{subfigure}\\
    
           \begin{subfigure}[b]{0.4\textwidth}
   \includegraphics[width=.7\textwidth]{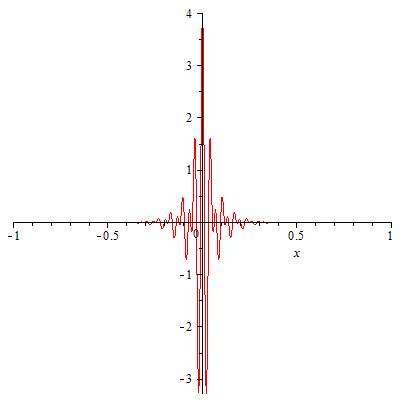}
           \caption{Time, $\phi_{4,+,0}(t)$\\ with $\phi(t)= \sinc^{15}(t)$}
       \end{subfigure}
       \begin{subfigure}[b]{0.4\textwidth}
           \includegraphics[width=.7\textwidth]{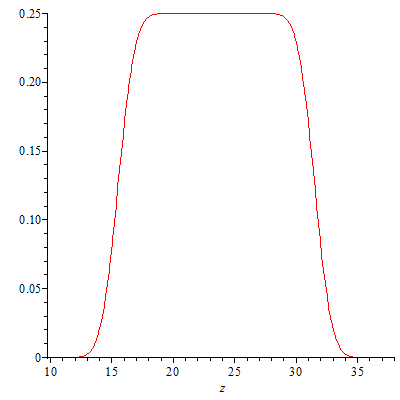}
           \caption{Frequency, $\widehat{\phi_{4,+,0}}(\omega)$\\ with $\phi(t) = \sinc^{15}(t)$}
       \end{subfigure}
   \caption{Frame element $\phipkn(t)$ in both time and frequency in dimension $d=1$. 
   Here, the reference frequency is $2^j, j=4$. We observe that the frame window in frequency \emph{works} as
   a characteristic
   function over $\ptq{2^j-\frac{1}{2}, 2^{j+1}-\frac{1}{2}}$. The frame functions are in general complex, here we are plotting the real part only.
   }\label{Fig:FrWin}
   \end{figure}

   First, we introduce conditions under which the system of functions
    $\ptg{\phipkn}_{j,k,\lambda\in\Gamma}$ is a frame representing the $H^s$-norm. Later, we 
    analyze $H^s$-seminorm's case as well. 
 
    \subsection{Main assumption and result}   
   Before going into specific details, let us give a brief explanation of which conditions we need
   for the $H^s$-norm characterization.
   We require that the frame elements
   cover in the frequency domain the whole of $\R^d$ and that they do not overlap too much. 
   Finally, we require  decay properties at infinity in the frequency domain, in order to obtain
   Sobolev regularity. 
   %Figure \ref{Fig:TreGauss} gives a hint of these two ideas. 
   Let us formalize. 
   %%%%%%%%%%%%%%%%
   \begin{Def}[$s$-admissible for Sobolev norms]
   	\label{def-sadm}
   Let $\ptg{\ptg{I_{j,k}}_{j \in \N, k \in K}, I_\bullet}$ 
   be an admissible covering and $s\geq0$, 
   we say that a pair of functions $\varpb,\varphi$ is $s$-admissible with respect to the covering if, given
   \begin{align}
   \Phi_{\bullet}(\omega) = \widehat{\varpb}(\omega),\qquad \Phip(\omega)=\sum_{\eta\in Z_{j,k} }
                         		\hat{\varphi}(\omega- \eta)\label{eq:G}
   \end{align}
   then
       \begin{itemize}
         \item[i)]There exists $\alpha>\frac{d}{2}$ such that 
   	\begin{align}
   |\Phi_{\bullet}(\omega)| \lesssim \frac{1}{(1+|\omega|)^{\alpha}},\quad\textrm{ and }\quad
           \label{eq:con2}
           |\Phip(\omega)|\lesssim \left\{
   			    \begin{array}{ll}
   			      \frac{2^{jd/2}}{\pt{1+d(\omega, I_{j,k})}^{\alpha+s}},& \quad \omega\neq I_{j,k}\\
   			      1, &\quad \omega\in \R^d
   			    \end{array}
   			   \right. . 
          \end{align}
   \item[ii)] There exists a constant $a>0$ such that,  if $\omega \in I_{\bullet}$ then 
   \begin{equation}
     \label{eq:cond2bullet}
     \abs{\Phi_{\bullet}(\omega)} \geq a,
   \end{equation}
   otherwise, there exists $\bar{j},\bar{k}$ such that $\omega \in I_{\bar{j},\bar{k}}$  and
     \begin{align}
   	   \label{eq:cond2j}
   	   |\Phi_{\bar{j},\bar{k}}(\omega)|\geq a.
     \end{align}
     The constant $a$ does not depend on $\bar{j},\bar{k}$.
       \end{itemize}	
   \end{Def} 
   \begin{rem}
   The decay hypothesis on $\varpb$ implies
   \[
   \sum_{\lambda \in \nu\Z} |\Lprod {T_{\lambda} \varpb}{f}|^2\lesssim \norm{f}^2_{L^2(\R^d)},	
   \]
   see e.g. \cite{CH03}[Thm 7.2.3 p.143].
   \end{rem}

   We state the main result for our Stockwell-like system. The $L^2(\R^d)$ case, hereby represented by $s=0$, has been proven already in \cite{BB15} under a different and stronger set of 
   hypotheses.
   \begin{thm}\label{thm:fram}
   Consider a $s$-admissible set of functions $\varpb,\varphi$ - cf. Definition~\ref{def-sadm} - with $s\geq 0$. Then there exists $\nu_0>0$ such that for each $\nu\in (0,\nu_0)$ the system
   \[
   F\pt{\varpb, \varphi,\Gamma}=\ptg{T_{\lambda}\varpb,\: \lambda \in \nu\Z^d} \cup \ptg{\phipkn,\: j,k,\lambda \in \Gamma}
   \] 
   is a frame representing the $H^s(\R^d)$ norm.
   Precisely, there exists $A,B>0$ such that for each $f\in H^s(\R^d)$
   \[
   A\norm{f}^2_s \leq\sum_{\lambda\in\nu\Z}|\Lprod {f}{T_{\lambda} \varpb}|^2  + \sum_{\varb}2^{2js}\abs{\Lprod{f}{\phipkn}}^2  \leq B\norm{f}^2_s.
   \]
   \end{thm}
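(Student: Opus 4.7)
The plan is to transcribe every inner product to the Fourier side and, scale by scale, run an adapted Walnut-type representation. By Plancherel and the formula for $\widehat{\phipkn}$ computed above,
\[
\Lprod{f}{\phipkn} = \frac{1}{2^{jd/2}}\int \hat f(\omega)\,\overline{\Phip(\omega)}\,e^{\pii\,\omega\cdot\lambda/2^j}\,\ud\omega,
\]
so the family $\ptg{\Lprod{f}{\phipkn}}_{\lambda\in\nu\Z^d}$ is, up to the factor $2^{jd/2}$ and after the substitution $\omega=2^j\eta$, the sampling on $\nu\Z^d$ of the inverse Fourier transform of $H_{j,k}(\eta):=\hat f(2^j\eta)\overline{\Phip(2^j\eta)}$. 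The low-frequency family $\ptg{T_\lambda\varpb}$ is handled by the same device; its Bessel bound is exactly the one invoked in the remark following Definition~\ref{def-sadm}.

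A dual-lattice Plancherel identity then yields, for each fixed $(j,k)$,
\[
\sum_{\lambda\in\nu\Z^d}\abs{\Lprod{f}{\phipkn}}^2=\frac{1}{\nu^d}\int\abs{\hat f(\omega)}^2\abs{\Phip(\omega)}^2\,\ud\omega + R_{j,k}(\nu),
\]
where $R_{j,k}(\nu)$ collects the off-diagonal cross-terms
\[
R_{j,k}(\nu)=\frac{1}{\nu^d}\sum_{m\neq 0}\int \hat f(\omega)\,\overline{\hat f(\omega+2^j m/\nu)}\,\Phip(\omega)\,\overline{\Phip(\omega+2^j m/\nu)}\,\ud\omega.
\]
Multiplying by $2^{2js}$ and summing over $(j,k)$, the diagonal part collapses into a Fourier multiplier with symbol $M(\omega):=\abs{\Phi_{\bullet}(\omega)}^2+\sum_{j,k}2^{2js}\abs{\Phip(\omega)}^2$.

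The heart of the argument is to prove $M(\omega)\sim (1+\abs{\omega})^{2s}$. Hypothesis~(i) of Definition~\ref{def-sadm} together with the finite-intersection property~(iv) of the covering yields the upper bound: for $\omega\in I_{j_0,k_0}$ the summand $(j,k)=(j_0,k_0)$ dominates and contributes $\sim 2^{2j_0 s}\sim (1+\abs{\omega})^{2s}$, while the $(\alpha+s)$-decay of $\Phip$ makes the remaining scales sum geometrically. The lower bound follows from \eqref{eq:cond2bullet}--\eqref{eq:cond2j}: each $\omega\notin I_\bullet$ lies in some $I_{\bar j,\bar k}$ with $|\Phi_{\bar j,\bar k}(\omega)|\geq a$, so $M(\omega)\gtrsim (1+\abs{\omega})^{2s}$, and the region $I_\bullet$ is taken care of by \eqref{eq:cond2bullet}. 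Thus the diagonal contribution is comparable to $\nu^{-d}\norm{f}^2_s$.

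It remains to absorb the cross-terms $R_{j,k}(\nu)$. Using Cauchy--Schwarz in $\omega$ together with the decay $\abs{\Phip(\omega)}\lesssim 2^{jd/2}(1+d(\omega,I_{j,k}))^{-\alpha-s}$, the shifted product $\Phip(\omega)\overline{\Phip(\omega+2^j m/\nu)}$ picks up a factor of order $(1+\abs{m}/\nu)^{-\alpha-s}$ after rescaling by $2^j$. Since $\alpha>d/2$, the $m$-sum converges, and combined with property~(iv) to control overlaps in $(j,k)$ one obtains $\sum_{j,k}2^{2js}\abs{R_{j,k}(\nu)}\lesssim \nu\,\norm{f}^2_s$, with a parallel estimate for the $\varpb$-term. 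Choosing $\nu_0$ so that this constant is a small fraction of the diagonal lower bound lets us close the two-sided frame inequality. The main technical obstacle is precisely this last step: the sampling spacing $\nu/2^j$ and the window width both scale with $j$, so the cross-term estimate must be uniform in $j$, and the exponent $\alpha>d/2$ in~\eqref{eq:con2} is exactly what produces a summable $m$-sum and a remainder that vanishes as $\nu\to 0$.
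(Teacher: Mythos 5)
Your overall architecture is the one the paper uses for its \emph{lower} bound: pass to the Fourier side, expand the weighted coefficient sum by a Walnut/Daubechies representation over the dual lattice $\frac{2^j}{\nu}\Z^d$, isolate the diagonal multiplier $M(\omega)=\abs{\Phi_{\bullet}(\omega)}^2+\sum_{j,k}2^{2js}\abs{\Phip(\omega)}^2$, and prove $M(\omega)\sim(1+\abs{\omega})^{2s}$ from Definition~\ref{def-sadm}; that part of your sketch is correct and matches Lemma~\ref{lem:low} and Corollary~\ref{cor:sommanorm}. The genuine gap is in the cross-term estimate, which is the technical heart and which you assert rather than prove. Two concrete problems. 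First, the exponent: you say the shifted product $\Phip(\omega)\overline{\Phip(\omega+2^jm/\nu)}$ picks up a factor $(1+\abs{m}/\nu)^{-\alpha-s}$ and that the $m$-sum converges ``since $\alpha>d/2$''. But $\sum_{m\in\Z^d\setminus\{0\}}\abs{m}^{-(\alpha+s)}$ converges only if $\alpha+s>d$, which does not follow from $\alpha>d/2$ and $s\geq0$. The sum one actually needs is $\sum_m\abs{m}^{-2\alpha}$, and the exponent $2\alpha>d$ only appears after Cauchy--Schwarz is arranged so that the \emph{square} of one window factor sits in each half — this is exactly the $c_{\sigma,m}^{1/2}d_{\sigma,m}^{1/2}$ splitting in the proof of Theorem~\ref{thm_down_op}. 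Second, the weights: the quantity $2^{2js}\abs{\Phip(\omega+2^jm/\nu)}^2$ must be paired with $(1+\abs{\omega+2^jm/\nu})^{-2s}$ evaluated at the \emph{same} shifted point, not at $\omega$; otherwise one picks up uncontrolled factors of order $(2^j\abs{m}/\nu)^{2s}$ when $\omega$ is near the origin. Matching the arguments and then trading the $(\alpha+s)$-decay for plain $\alpha$-decay is precisely the content of Lemma~\ref{L_uplow}, which your sketch never invokes.

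Moreover, your claim that the \emph{full} cross-term satisfies $\sum_{j,k}2^{2js}\abs{R_{j,k}(\nu)}\lesssim\nu\norm{f}_s^2$ is stronger than what the paper establishes and is unsubstantiated. The paper splits $f=f_{\sigma,1}+f_{\sigma,2}$ relative to the enlarged sets $E_{j,k}$ of \eqref{eq:epnu}: the piece with both factors supported in $E_{j,k}$ vanishes identically (the shift $2^jm/\nu$ moves $E_{j,k}$ off itself for $\nu<\frac12$), only the two mixed pieces are estimated, and the piece with both factors outside $E_{j,k}$ is handled \emph{only by positivity} — enough for the lower bound, useless for the upper one. The upper bound is then obtained by a separate route (the uniform Bessel Lemmata~\ref{lem:1Bessel} and \ref{lem:2Bessel} combined with Corollary~\ref{cor:sommanorm}). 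If you want a single Walnut-based proof of both inequalities you must actually estimate the $R_{2,2}$-type piece; doing so yields at best a cross-term of size $O(\nu^{\alpha-d})\norm{f}_s^2$ against a diagonal of size $\nu^{-d}\norm{f}_s^2$ — small relative to the diagonal, hence sufficient, but not $O(\nu)\norm{f}_s^2$ in absolute terms. As written, the step that closes the two-sided inequality is missing.
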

   We prove the result in the next sections providing first the upper bound and then we deal with the lower one. 
   We develop the proof in dimension $d=1$ for the sake of simplicity. The proof for arbitrary dimension is essentially the same. 
   % \begin{figure}
   % 	  \includegraphics[width=.3\textwidth]{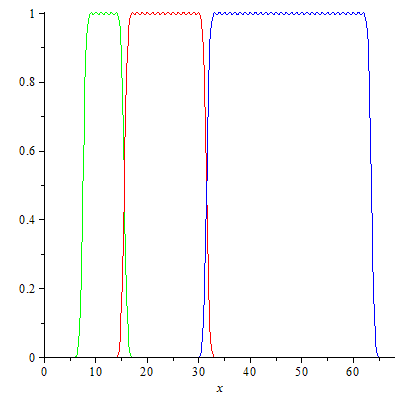}
   %                 \caption{ Real part of the Fourier transforms 
   %                 $\widehat{\phi_{3,+,0}}(\omega)$,
   %                 $\widehat{\phi_{4,+,0}}(\omega)$,
   %                 $\widehat{\phi_{5,+,0}}(\omega)$
   %                 with a Gaussian window $\phi$.}	
   %                   \label{Fig:TreGauss}
   % \end{figure}

   %
   %\begin{rem}
   %We used $2^j$ as the (absolute value of the) reference frequency of $I_{j,k}$ even if it does not represent the center of the frequency band $I_{j,k}$. In principle, the correct Sobolev weight is given by $(1+|\omega_{j,k}|)$ where 
   %$\omega_{j,k} = k(2^{j}+2^{j/2})$ is the center of the band $I_{j,k}$.  Clearly,
   %\[1+|\omega_{j,k}| \asymp 2^j,\]
   %hence, for the sake of clarity, we shall use $2^j$ as our frequency weight.
   %\end{rem}

   \subsection{Upper bound}
   We  introduce a fundamental lemma.
   \begin{lem}\label{L_uplow}
       Let $s\geq 0$ and $\varphi $ be a function such that
         \begin{align}
           \label{eq_fcnd}
           |\Phi_{j,k}(\omega)|\lesssim \left\{
   			    \begin{array}{ll}
   			      \frac{2^{j/2}}{\pt{1+d(\omega, I_{j,k})}^{\alpha+s}},& \quad \omega\notin I_{j,k}\\
   			      1, &\quad \omega \in \R
   			    \end{array}
   			   \right. ,
          \end{align} 
          for some $\alpha>\frac{1}{2}$.
   Then
     \begin{align}
       \label{eq:bd_ups1}    
        \frac{2^{js}}{(1+|\omega|)^{s}} |\Phip(\omega)| &\lesssim \frac{2^{j/2}}{\pt{1+d(\omega, I_{j,k})}^{\alpha}}  &\quad \mbox{a.e. }\omega \notin I_{j,k}, 
        \\
        \label{eq:bd_ups2}
        \frac{2^{js}}{(1+|\omega|)^{s}} |\Phip(\omega)| &\lesssim 1  &\quad \mbox{a.e. }\omega \in I_{j,k}.      
     \end{align}

   \end{lem}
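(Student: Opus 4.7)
The plan is to exploit admissibility property v), which forces $|\omega|\sim 2^j$ on $I_{j,k}$, so that the extra factor $2^{js}$ can be absorbed either by $(1+|\omega|)^s$ (when $\omega$ sits at frequency comparable to $2^j$) or by $(1+d(\omega,I_{j,k}))^s$ (when $\omega$ is far away at low frequency). No oscillatory cancellation is needed; the proof is purely a pointwise envelope estimate in two separate regions.

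For \eqref{eq:bd_ups2}, where $\omega\in I_{j,k}$, I would apply property v) directly: it gives $|\omega|\geq c_{\min}2^j$, hence, since $s\geq 0$,
\[
(1+|\omega|)^s \geq (c_{\min}2^j)^s = c_{\min}^{s}\,2^{js}.
\]
Combining with the second branch of \eqref{eq_fcnd}, namely $|\Phi_{j,k}(\omega)|\lesssim 1$, yields $\frac{2^{js}}{(1+|\omega|)^s}|\Phi_{j,k}(\omega)|\lesssim c_{\min}^{-s}$, which is \eqref{eq:bd_ups2}.

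For \eqref{eq:bd_ups1} the claim, after dividing by $\frac{2^{j/2}}{(1+d(\omega,I_{j,k}))^\alpha}$ and using the first branch of \eqref{eq_fcnd}, is equivalent to
\[
\frac{2^{js}}{(1+|\omega|)^s\bigl(1+d(\omega,I_{j,k})\bigr)^s}\lesssim 1,
\]
which in turn follows from $(1+|\omega|)\bigl(1+d(\omega,I_{j,k})\bigr)\gtrsim 2^j$. I would split into two subcases. If $|\omega|\geq \frac{c_{\min}}{2}2^j$, then $(1+|\omega|)^s\gtrsim 2^{js}$ and the factor $2^{js}$ is absorbed immediately. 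If instead $|\omega|<\frac{c_{\min}}{2}2^j$, then for every $\xi\in I_{j,k}$ property v) gives $|\xi|\geq c_{\min}2^j$, whence by the reverse triangle inequality
\[
|\xi-\omega|\geq |\xi|-|\omega| \geq c_{\min}2^j - \tfrac{c_{\min}}{2}2^j = \tfrac{c_{\min}}{2}2^j,
\]
so that $d(\omega,I_{j,k})\geq \frac{c_{\min}}{2}2^j$ and $(1+d(\omega,I_{j,k}))^s\gtrsim 2^{js}$. In either subcase one of the two factors contributes the needed $2^{js}$, reducing the decay exponent from $\alpha+s$ to $\alpha$ and producing \eqref{eq:bd_ups1}.

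The only delicate point, which I would not call a real obstacle, is to make the case split stable in the constants: one must choose the threshold ($\frac{c_{\min}}{2}2^j$ above) depending only on the admissibility constants $c_{\min},c_{\max}$, so that the implicit constants in $\lesssim$ are uniform in $j,k$. Everything else is the routine book-keeping of two pointwise inequalities.
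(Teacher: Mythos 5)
Your proof is correct and follows essentially the same route as the paper's: both arguments reduce \eqref{eq:bd_ups1} to the pointwise inequality $(1+|\omega|)^s\pt{1+d(\omega,I_{j,k})}^s\gtrsim 2^{js}$, which the paper obtains via the triangle inequality applied to a point $\overline{\omega}\in I_{j,k}$ realizing the distance (with $|\overline{\omega}|\sim 2^j$ by admissibility), while you obtain it by a dichotomy on whether $|\omega|$ is comparable to $2^j$ or not. The treatment of \eqref{eq:bd_ups2} is identical in substance, and your remark about keeping the threshold uniform in $j,k$ is exactly the role played by the constants $c_{\min},c_{\max}$ of Definition~\ref{def:adm}.
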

   \begin{proof}
   	Inequality~\eqref{eq:bd_ups2} is trivially verified since $\omega \sim 2^j$ if $\omega \in I_{j,k}$.

    In order to prove \eqref{eq:bd_ups1}, notice that
    by hypothesis \eqref{eq_fcnd}
    \begin{align*}
         \frac{2^{js}}{(1+|\omega|)^s} \abs{\Phip(\omega)}
         %\lesssim 
         %\frac{2^{js}}{(1+|\omega|)^s}  \frac{2^{j/2} } {\pt{1+d(\omega, I_{j,k})}^{\alpha+s}}\\      
         &\lesssim \frac{2^{js}}{(1+|\omega|)^s \pt{1+d(\omega,I_{j,k})}^s} \frac{2^{j/2}}{\pt{1+d(\omega,I_{j,k})}^{\alpha}}.\\
       \end{align*}
   Now, there exists $\overline{\omega}\in I_{j,k}$ such that 
   $d(\omega,I_{j,k}) = |\omega -  \overline{\omega}|$, moreover 
   $|\overline{\omega}|\sim 2^j$ because the covering is admissible.
   Hence, by triangular inequality 
    \begin{align*}
         \frac{2^{js}}{(1+|\omega|)^s} \abs{\Phip(\omega)}&
         \lesssim \frac{2^{js}}{(1+|\omega|)^s \pt{1+|\overline{\omega}-\omega|}^s} \frac{2^{j/2}}{\pt{1+d(\omega,I_{j,k})}^{\alpha}}\\
   	&\lesssim \frac{2^{js}}{ \pt{1+|\overline{\omega}|}^s} 
   	\frac{2^{j/2}}{\pt{1+d(\omega,I_{j,k})}^{\alpha}}\\
   	&\lesssim \frac{2^{j/2}}{\pt{1+d(\omega,I_{j,k})}^{\alpha}}.
       \end{align*} 
   \end{proof}
   We have an immediate corollary.
   \begin{cor}
   \label{cor:sommanorm}
   Under the hypothesis \eqref{eq_fcnd} of Lemma~\ref{L_uplow} above, there exists $ b_s \in \R$ such that
     \begin{align}
       \label{eq:ups}
        \sum_{j,k\in\Gamma}\frac{2^{2js}}{(1+|\omega|)^{2s}} |\Phip(\omega)|^2\leq b_s, \quad \mbox{a.e. }\omega \in \R. 
     \end{align}	
   \end{cor}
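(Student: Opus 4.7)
The plan is to fix $\omega\in\R$ and show that the sum is bounded independently of $\omega$ by splitting the index set $\Gamma$ according to the position of $\omega$ relative to $I_{j,k}$. First, I would separate the pairs $(j,k)$ with $\omega\in I_{j,k}$ from those with $\omega\notin I_{j,k}$. For the first group, the admissibility property (iv) bounds the cardinality by the absolute constant $N$, and estimate~\eqref{eq:bd_ups2} of Lemma~\ref{L_uplow} shows that each summand is $\lesssim 1$, so this portion contributes at most $N$ times an absolute constant.

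For the remaining pairs, \eqref{eq:bd_ups1} reduces the task to controlling
\[
S(\omega)\;=\;\sum_{(j,k):\, \omega\notin I_{j,k}}\frac{2^{j}}{\bigl(1+d(\omega,I_{j,k})\bigr)^{2\alpha}}
\]
uniformly in $\omega$. The key is to use admissibility (v) to split into three regimes in $j$: a \emph{high} regime $J_+=\{j:\,c_{\min}2^{j}>2(1+|\omega|)\}$, a \emph{low} regime $J_-=\{j:\,2c_{\max}2^{j}<(1+|\omega|)/2\}$, and an \emph{intermediate} regime $J_0=\N\setminus(J_+\cup J_-)$ of cardinality $O(1)$. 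For $j\in J_+$, any $\xi\in I_{j,k}$ satisfies $|\xi|\geq c_{\min}2^{j}$, so by the triangle inequality $d(\omega,I_{j,k})\gtrsim 2^{j}$; each term is then $\lesssim 2^{(1-2\alpha)j}$ and, since $2\alpha>1$, the geometric sum is bounded by $\lesssim 2^{(1-2\alpha)j_+}$ with $2^{j_+}\sim 1+|\omega|$, giving a contribution $\lesssim(1+|\omega|)^{1-2\alpha}\leq 1$. For $j\in J_-$, the same triangle inequality gives $d(\omega,I_{j,k})\gtrsim 1+|\omega|$, and summing $\sum_{j\in J_-}2^{j}\lesssim 1+|\omega|$ yields a bound $\lesssim(1+|\omega|)^{1-2\alpha}\leq 1$.

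For the intermediate scales $j\in J_0$ — those with $2^{j}\sim 1+|\omega|$ — there are only $O(1)$ values of $j$, each with $|K|<\infty$ directions, so the finite intersection property (iv) is not even necessary; it suffices to combine the trivial bound $|\Phi_{j,k}(\omega)|\lesssim 1$ from~\eqref{eq_fcnd} with $2^{2js}/(1+|\omega|)^{2s}\sim 1$ to conclude each summand is $\lesssim 1$. Multiplying by $|K|$ and the number of intermediate scales, this contribution is also $O(1)$.

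The main obstacle I anticipate is bookkeeping across the three regimes in a way that avoids constants depending on $\omega$, in particular verifying that the geometric series summed over $J_+$ and the arithmetic-type series over $J_-$ combine to give the same power $(1+|\omega|)^{1-2\alpha}$, which is uniformly bounded exactly because $\alpha>1/2$. Special care is needed when $|\omega|\lesssim 1$, where $J_-$ may be empty and $j_+=0$, but then the estimate over $J_+$ alone collapses to the standard geometric series $\sum_{j\geq 0}2^{(1-2\alpha)j}$. Collecting the contributions from $\omega\in I_{j,k}$, $J_+$, $J_-$, and $J_0$ yields a constant $b_s$ depending only on $\alpha,s,c_{\min},c_{\max},N,|K|$, completing the proof of~\eqref{eq:ups}.
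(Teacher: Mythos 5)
Your proposal is correct and follows essentially the same route as the paper: invoke Lemma~\ref{L_uplow} to reduce to bounding $\sum_{j,k} 2^{j}\bigl(1+d(\omega,I_{j,k})\bigr)^{-2\alpha}$ over the tiles not containing $\omega$, handle the $O(N)$ tiles containing $\omega$ by \eqref{eq:bd_ups2}, and sum the rest using $\alpha>1/2$. Your three-regime split in $j$ is just a more explicit verification of the paper's one-line observation that every non-adjacent tile satisfies $d(\omega,I_{j,k})\gtrsim 2^{j-1}$, which already yields the summable bound $2^{j(1-2\alpha)}$ uniformly in $\omega$.
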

     \begin{proof}
       For any $\omega \in \R$, there exists $\bar{j}, \bar{k}$ such that $\omega\in I_{\bar{j}, \bar{k}}$
       and for all $\pt{j,k}$ except $\pt{\bar{j}-1,\bar{k}}$, $\pt{\bar{j}+1,\bar{k}}$ we have
     \begin{align*}
     	\dist{\omega}{I_{j,k}} > 2^{j-1},
     \end{align*}  
     this number is independent from the particular point $\omega$. Hence, the claim follows from Lemma~\ref{L_uplow}.
     \end{proof}
   \begin{rem}\label{rem_decay_finite}
   	We notice that the decay at infinity in~\eqref{eq_fcnd}
    can be relaxed. Precisely, we may ask
    that $\Phip$ is uniformly
    bounded and that \eqref{eq_fcnd} holds for $j\geq j_0$, for some $j_0>0$. Indeed, when we estimate the sum in~\eqref{eq:ups}, we can ignore an arbitrary (but finite) number of terms.
   \end{rem}
   In order to show the boundedness of the coefficients with respect to the Sobolev norm, we introduce the following family of sets
   \begin{equation}
   \label{eq:epnu}
   \begin{split}
   E_{j,k}&= \ptg{\omega \in \R^d : \dist{\omega}{I_{j,k}}\leq 2^{j-1}},
   \\
   E_{\bullet} &= I_{\bullet}. 
   \end{split}
   \end{equation}

    \begin{lem}
     \label{lem:1Bessel}
      Let $s\geq 0$ and $\varphi$  such that $\Phip(\omega)$ satisfies hypothesis \eqref{eq_fcnd}.
   Define
      \begin{align}
         \label{eq:widetilde}
         \widetilde{\phi_{j,k,\lambda}}(t)=\frac{1}{2^{j/2}} T_{\frac{\lambda}{\bap}} 
         \F^{-1}_{\omega\mapsto t}\pt{ \frac{2^{js}}{(1+|\omega|)^s} \Phip(\omega)}(t)
      \end{align}
      then for $\nu\in (0,1]$ and all $j,k$ the system of functions 
      \[
        \ptg{ \widetilde{\phi_{j,k,\lambda}}(t)}_{\lambda\in \nu \Z}
      \]
      is a Bessel sequence uniformly in $j,k$, that is
      \begin{align}
        \label{eq:bessel}
        \sum_{\lambda \in \nu\Z} \abs{\Lprod{\widetilde{\phi_{j,k,\lambda}}} {f}}^2\leq C_{\nu} \norm{f}^2_{L^2(\R)}
      \end{align}
      with $C_{\nu}$ independent on $j,k$.
    \end{lem}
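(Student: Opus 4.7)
The plan is to pass to the Fourier side and reduce the Bessel estimate for the translates $\{\widetilde{\phi_{j,k,\lambda}}\}_{\lambda\in\nu\Z}$ to a sampling identity. Setting $\Psi_{j,k}(\omega):=\frac{2^{js}}{(1+|\omega|)^s}\Phi_{j,k}(\omega)$ and $H_{j,k}(\omega):=\Psi_{j,k}(\omega)\overline{\hat f(\omega)}$, Plancherel gives
\[
\langle\widetilde{\phi_{j,k,\lambda}},f\rangle=\frac{1}{2^{j/2}}\,\widehat{H_{j,k}}\bigl(\lambda/2^j\bigr),
\]
so for $\lambda\in\nu\Z$ one is sampling $\widehat{H_{j,k}}$ on the lattice $(\nu/2^j)\Z$. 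The classical Plancherel identity for regular sampling (applied with step $T=\nu/2^j$, identifying $\widehat{H_{j,k}}$ at nodes $mT$ with Fourier coefficients of the $1/T$-periodization of $H_{j,k}$) yields
\[
\sum_{\lambda\in\nu\Z}\bigl|\langle\widetilde{\phi_{j,k,\lambda}},f\rangle\bigr|^2
=\frac{1}{\nu}\int_0^{2^j/\nu}\Big|\sum_{\ell\in\Z}H_{j,k}\bigl(\omega+2^j\ell/\nu\bigr)\Big|^2 d\omega.
\]

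Applying Cauchy--Schwarz inside the square decouples $\Psi_{j,k}$ from $\hat f$, and integrating the periodization of $|\hat f|^2$ over $[0,2^j/\nu]$ recovers $\norm{f}_{L^2(\R)}^2$. The estimate therefore reduces to showing the uniform bound
\[
M:=\operatorname*{ess\,sup}_{\omega\in\R}\sum_{\ell\in\Z}\bigl|\Psi_{j,k}\bigl(\omega+2^j\ell/\nu\bigr)\bigr|^2\le C,
\]
with $C$ independent of $j,k$, for $\nu$ sufficiently small. I would extract this from Lemma~\ref{L_uplow}: one has $|\Psi_{j,k}|\lesssim 1$ on $I_{j,k}$ and $|\Psi_{j,k}(\omega)|\lesssim 2^{j/2}/(1+d(\omega,I_{j,k}))^{\alpha}$ outside $I_{j,k}$, while admissibility forces the diameter of $I_{j,k}$ to be comparable to $2^j$. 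Choosing $\nu_0$ small enough that the shift step $2^j/\nu$ strictly exceeds this diameter, for each $\omega$ at most one $\ell^*\in\Z$ places $\omega+2^j\ell^*/\nu$ in $I_{j,k}$, and for $\ell\ne\ell^*$ one has $d(\omega+2^j\ell/\nu,I_{j,k})\gtrsim |\ell-\ell^*|\,2^j/\nu$. Summing,
\[
M\lesssim 1+\nu^{2\alpha}\,2^{j(1-2\alpha)}\sum_{\ell\ne 0}|\ell|^{-2\alpha},
\]
which is finite and uniformly bounded in $j\ge 0$ precisely because $\alpha>1/2$, giving $C_\nu\lesssim 1/\nu$.

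The main obstacle is ensuring the uniformity in $j$: both the transition-zone amplitude $2^{j/2}$ of $\Psi_{j,k}$ and the sampling spacing $2^j/\nu$ grow with $j$, and the $2^{-j/2}$ normalization built into $\widetilde{\phi_{j,k,\lambda}}$ is precisely what makes these competing growths cancel, leaving a convergent tail controlled by the hypothesis $\alpha>1/2$. The admissibility assumption $\operatorname{diam}(I_{j,k})\sim 2^j$ is essential, since it guarantees that at most one translate of $\omega$ by $2^j\ell/\nu$ lies inside the ``bad'' region $I_{j,k}$ once $\nu$ is small, reducing the series to a tail that behaves uniformly in $j$.
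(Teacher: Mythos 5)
Your proposal is correct and follows essentially the same route as the paper: the paper invokes the standard equivalence between the Bessel property of a family of lattice translates and the boundedness of the periodization of the squared modulus of the Fourier transform (citing Christensen, Thm 7.2.3), which is exactly the identity you re-derive by hand via Plancherel, sampling, and Cauchy--Schwarz, and then both arguments estimate the periodized sum using Lemma~\ref{L_uplow} to get one $O(1)$ term near $I_{j,k}$ plus a tail of order $\nu^{2\alpha}2^{j(1-2\alpha)}\sum_{m}|m|^{-2\alpha}$, uniformly bounded since $\alpha>\frac12$. The only cosmetic difference is that you ask for $\nu$ small so that exactly one shifted point lands in $I_{j,k}$, whereas for the claimed range $\nu\in(0,1]$ one only gets a uniformly bounded number of such points, which suffices just as well.
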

   %%%%%%%%%%%%%%%%%%%%%%%%%%%%%%%%%%%%%%%%%%%%%%%%%%%%%%%%%%%%%%%%%%%%%%%%%%%%%%%%%%%%%%%%%%%%%%%%%%%%%%%%%%%%%%%%%%%%%%%%%%%%%%%%  
   %%%%%%%%%%%%%%%%%%%%%%%%%%DIMOSTRAZIONE LEMMA 1 LIMITATEZZA OPERATORE DI COEFFICIENTI%%%%%%%%%%%%%%%%%%%%%%%%%%%%%%%%%%%%%%%%%%%          
   %%%%%%%%%%%%%%%%%%%%%%%%%%%%%%%%%%%%%%%%%%%%%%%%%%%%%%%%%%%%%%%%%%%%%%%%%%%%%%%%%%%%%%%%%%%%%%%%%%%%%%%%%%%%%%%%%%%%%%%%%%%%%%%%  
    \begin{proof}
      A well known result (see e.g.~\cite{CH03}[Thm 7.2.3 p.143]) states that the Bessel property \eqref{eq:bessel}
      of $\ptg{ \widetilde{\phi_{j,k,\lambda}}(t)}_{\lambda \in \nu\Z}$, for fixed $j$,
      is equivalent to the following condition
      \[
        \Xi_{j,k}(\gamma)=
        \sum_{m \in \Z} \abs{ \F\pt{\widetilde{\phi_{j,k,0}}}
        \pt{ \pt{\gamma-m} \frac{\bap}{\nu}}}^2\leq C_\nu \frac{\nu}{\bap}, \quad a.e.\; \gamma \in [0,1].
      \]
      By definition
      \begin{align*}
        \Xi_{j,k}(\gamma)&= \frac{1}{\bap}\sum_{m\in \Z} \abs{\frac{2^{js}}{(1+\abs{\pt{\gamma-m} \frac{\bap}{\nu}})^s} \Phip\pt{\pt{\gamma-m} \frac{\bap}{\nu}}}^2.
      \end{align*}
      Using the hypothesis \eqref{eq_fcnd} and relations \eqref{eq:bd_ups1}, \eqref{eq:bd_ups2} 
      we can write
      \begin{align}
      \nonumber
        \Xi_{j,k}(\gamma)& \lesssim \frac{1}{\bap}\pt{
        1+ \sum_{|m|>1} \frac{2^{j}}{ \pt{1+ d\pt{\pt{\gamma-m} \frac{\bap}{\nu}, I_{j,k}}}^{2\alpha}}}\\
        \nonumber
        &\lesssim \frac{1}{\bap}\pt{
        1+ \sum_{|m|> 1} \frac{2^{j}}{  \pt{(|m|-1) \frac{\bap}{\nu} }^{2\alpha}}}\\
        \label{eq:ultimalim}
        &\lesssim \frac{1}{\bap}\pt{1+ \frac{\nu^{2\alpha}}{2^{j(2\alpha-1)}}
         \sum_{|m|> 1} \frac{1}{  \pt{(|m|-1) }^{2\alpha}}}, \quad \mbox{a.e. }\; \gamma\in [0,1].
      \end{align}
      By hypothesis $2\alpha>1$, therefore the sum in \eqref{eq:ultimalim} is convergent, 
      moreover it is uniformly bounded with respect to $j$. 
   %    To conclude the proof, set $C_\nu$ in function of \eqref{eq:ultimalim}.
   \end{proof}   
 
    \begin{lem}
       \label{lem:2Bessel}
      Let $\Phip$, $\widetilde{\phi_{j,k,\lambda}}$ as in Lemma \ref{lem:1Bessel} and $E_{j,k}$ as in \eqref{eq:epnu}.    
      If $\nu\in (0,1]$ and 
      \begin{align*}
      \supp \widehat{f} \cap E_{j,k} =\emptyset,
      \end{align*} 
      then
      \begin{align}
        \label{eq:bessel2}
        \sum_{\lambda \in \nu\Z} \abs{\Lprod{\widetilde{\phi_{j,k,\lambda}}} {f}}^2\leq \frac{C_{\nu}}{2^{j(2\alpha -1)}} \norm{f}^2_{L^2(\R)},
      \end{align}
         with $C_{\nu}$ as in Lemma \ref{lem:1Bessel}, therefore independent on $j,k$.
    \end{lem}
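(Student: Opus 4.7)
The plan is to re-run the proof of Lemma \ref{lem:1Bessel} while exploiting the support restriction $\supp\widehat{f}\cap E_{j,k}=\emptyset$ to kill precisely the terms that in Lemma \ref{lem:1Bessel} produced the non-decaying-in-$j$ piece of the Bessel estimate. The key observation is that under this support assumption one may replace $\widetilde{\phi_{j,k,0}}$ by $\F^{-1}\pt{\F(\widetilde{\phi_{j,k,0}})\cdot\chi_{E_{j,k}^c}}$ without altering any pairing $\Lprod{\widetilde{\phi_{j,k,\lambda}}}{f}$, since by Plancherel each one only sees $\widehat{f}$ on $E_{j,k}^c$.

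With that reduction in hand, I would apply once more the translate-Bessel characterization \cite{CH03}[Thm 7.2.3 p.143] to the modified window. As in Lemma \ref{lem:1Bessel}, the claim reduces to showing
\[
\Xi_{j,k}(\gamma)=\frac{1}{\bap}\sum_{m\in\Z}\abs{\frac{2^{js}}{(1+\abs{(\gamma-m)\bap/\nu})^s}\Phip\pt{(\gamma-m)\bap/\nu}}^2\chi_{E_{j,k}^c}\pt{(\gamma-m)\bap/\nu}\lesssim\frac{C_\nu}{2^{2j\alpha}}
\]
uniformly in $j,k$ and a.e. $\gamma\in[0,1]$, which via the cited equivalence delivers a Bessel bound of order $(\bap/\nu)\cdot C_\nu/2^{2j\alpha}$, i.e. $C_\nu/2^{j(2\alpha-1)}$ after absorbing the $1/\nu$ into the $\nu$-dependent constant exactly as in Lemma \ref{lem:1Bessel}.

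To prove the display, set $\omega_m=(\gamma-m)\bap/\nu$. The indicator retains only indices with $\omega_m\notin E_{j,k}$, hence in particular $\omega_m\notin I_{j,k}$, so \eqref{eq:bd_ups1} of Lemma \ref{L_uplow} applies to every surviving term and yields $\Xi_{j,k}(\gamma)\lesssim\sum_{m:\,\omega_m\notin E_{j,k}}(1+\dist{\omega_m}{I_{j,k}})^{-2\alpha}$. Since consecutive $\omega_m$ are spaced by $\bap/\nu\geq\bap$ (as $\nu\leq 1$) while $I_{j,k}$ has diameter of order $\bap$, only a bounded number of indices $m$ can place $\omega_m$ within distance of order $\bap$ of $I_{j,k}$, and for each such index the exclusion from $E_{j,k}$ forces $\dist{\omega_m}{I_{j,k}}>2^{j-1}$, contributing $\lesssim 2^{-2j\alpha}$ per term. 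The remaining indices satisfy $\dist{\omega_m}{I_{j,k}}\gtrsim|m|\bap/\nu$, so their tail contribution is $(\nu/\bap)^{2\alpha}\sum_{|m|\geq M_0}|m|^{-2\alpha}$, a convergent series because $2\alpha>1$. Adding both pieces gives the required estimate. The only real obstacle is the bookkeeping around $\partial E_{j,k}$: excising exactly the width-$2^{j-1}$ buffer is what trades the non-decaying constant ``$1$'' present in Lemma \ref{lem:1Bessel} for the decaying factor $2^{-2j\alpha}$ responsible for the announced improvement $2^{-j(2\alpha-1)}$.
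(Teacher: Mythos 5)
Your argument is correct and follows essentially the same route as the paper: use the support condition to insert $\chi_{\R\setminus E_{j,k}}$ into the Fourier-side pairing, invoke the same periodization criterion as in Lemma~\ref{lem:1Bessel}, and split the lattice points $\omega_m$ into the boundedly many indices near $I_{j,k}$ (where exclusion from $E_{j,k}$ forces $\dist{\omega_m}{I_{j,k}}>2^{j-1}$ and hence a gain of $2^{-2j\alpha}$ per term) and the far indices (controlled by the convergent tail $\sum(|m|-1)^{-2\alpha}$ with the factor $\nu^{2\alpha}2^{-2j\alpha}$). The paper's proof is the same computation, with your set of ``near'' indices appearing there as $M_{j,\nu}$.
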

   \begin{proof}
     Since $\hat{f}(\omega)=0$ if $\omega\in E_{j,k}$
     \begin{align*}
       \sum_{\lambda \in \nu\Z} \abs{\Lprod{\widetilde{\phi_{j,k,\lambda}}} {f}}^2%&=
      % \sum_{\lambda \in \nu\Z} \abs{\Lprod{ \F\pt{\widetilde{\phi_{j,k,\lambda}}}} {\F(f)}}^2\\
       &=\sum_{\lambda \in \nu\Z} \abs{\Lprod{ \chi_{\R\setminus E_{j,k} }\F\pt{\widetilde{\phi_{j,k,\lambda}}}} {\F(f)}}^2.
     \end{align*}
     Therefore, using the same property of Lemma \ref{lem:1Bessel},
     \eqref{eq:bessel2} is equivalent to prove that
     \begin{align}
       \label{eq:final}
       \frac{1}{\bap}\sum_{m\in \Z} \abs{ \chi_{\R \setminus E_{j,k}} 
       \pt{m_{\gamma,j,\nu}} \frac{2^{js}}{\pt{1+\abs{m_{\gamma,j,\nu}}}^s} \Phip\pt{m_{\gamma,j,\nu}}}^2\leq \frac{1}{2^{j(2\alpha)}},
     \end{align}
      where $m_{\gamma,j,\nu} = \pt{\gamma-m} \frac{\bap}{\nu}$.
      Since $\nu\leq 1 $, 
      for each $j,k$, there exist a finite number of consecutive indexes $m$ such that $\pt{\gamma-m} \frac{\bap}{\nu} \in  E_{j,k}$ for some $\gamma\in [0,1]$. We set 
      \[
       M_{j,\nu} = \ptg{m \in \Z :\: \exists\; \gamma \in [0,1] \textrm{ such that } \pt{\gamma-m} \frac{\bap}{\nu} \in  E_{j,k}}.
       \]
     We notice that $M_{j,\nu}$ is uniformly bounded with respect to $j$, by the properties of the partitioning 
     and by the definition of $E_{j,k}$.
     %is finite set of integer numbers around $m=0$.
    \\
    If $m\in M_{j,\nu}$ and  $m_{\gamma,j,\nu}\in E_{j,k}$, then
     \[
      \abs{ \chi_{\R \setminus E_{j,k}} \pt{m_{\gamma,j,\nu}} \frac{2^{js}}
      {\pt{1+\abs{ m_{\gamma,j,\nu}}}^s} \Phip\pt{m_{\gamma,j,\nu}}}^2 = 0 .
     \]
     Otherwise 
    $\chi_{\R \setminus E_{j,k}} \pt{m_{\gamma,j,\nu}}=1$ and, using Lemma~\ref{L_uplow},
      \[
        \abs{  \frac{2^{js}}{\pt{1+\abs{m_{\gamma,j,\nu} } }^s} \Phip\pt{m_{\gamma,j,\nu}}}^2 \lesssim
        \abs{\frac{2^{j/2}}{\pt{1+\bap}^{\alpha}} }^2 \lesssim 2^{j(1-2\alpha)} .
      \]
   Hence, \eqref{eq:final} is bounded by
   \begin{align}\label{eq_mpnu}
   \abs{M_{j,\nu}}2^{-2\alpha j} \!+\! \frac{1}{\bap}\sum_{m\notin M_{j,\nu}} \abs{ \chi_{\R \setminus E_{j,k}} \pt{m_{\gamma,j,\nu}} \frac{2^{js}}{\pt{1+\dist{m_{\gamma,j,\nu}}{I_{j,k}}}^s} \Phip\pt{m_{\gamma,j,\nu}}}^2.
   \end{align}
   The second term in the equation above may be bounded as follows
      \begin{align*}
         & \frac{1}{\bap}\sum_{m\notin M_{j,\nu}} \abs{ \chi_{\R \setminus E_{j,k}}
         \pt{m_{\gamma,j,\nu}} \frac{2^{js}}{\pt{1+\dist{m_{\gamma,j,\nu}}{I_{j,k}}}^s} 
         \Phip\pt{m_{\gamma,j,\nu}}}^2\\
         &\qquad\lesssim \frac{1}{\bap} \frac{\nu^{2\alpha} 2^j }{2^{j(2\alpha)}}
         \sum_{|m|\geq 2} \frac{1}{  \pt{(|m|-1) }^{2\alpha}}\\
         &\qquad\lesssim  \frac{\nu^{2\alpha} } {2^{j(2\alpha)}}
         \sum_{|m|\geq 2} \frac{1}{  \pt{(|m|-1) }^{2\alpha}}\\
         &\qquad\lesssim \frac{\nu^{2\alpha} } {2^{j(2\alpha)}}.
      \end{align*}
   Then the assertion follows as in Lemma \ref{lem:1Bessel}.
   \end{proof}
   %%%%%%%%%%%%%%%%%%%%%%%%%%%%%%%%%%%%%%%%%%%%%%%%%%%%%%%%%%%%%%%%%%%%%%%%%%%%%%%%%%%%%%%%%%%%%%%%%%%%%%%%%%%%%%
   %%%%%%%%%%%%%%%%%%%%%%%THEOREM BOUNDNESS%%%%%%%%%%%%%%%%%%%%%%%%%%%%%%%%%%%%%%%%%%%%%%%%%%%%%%%%%%%%%%%%%%%%%%
   %%%%%%%%%%%%%%%%%%%%%%%%%%%%%%%%%%%%%%%%%%%%%%%%%%%%%%%%%%%%%%%%%%%%%%%%%%%%%%%%%%%%%%%%%%%%%%%%%%%%%%%%%%%%%%
   \begin{thm}\label{thm:Bound}
       Let $s\geq 0$ and $\varphi$ so that condition \eqref{eq_fcnd} holds for $\alpha>\frac{1}{2}$, 
   %       Moreover,
   %       \[
   %       \widehat\varphi(\xi) \lesssim (1+|\omega|)^{-\alpha}. 
   %       \]
   %        \item[iii)] 
   %          \begin{align}
   %          \label{eq:con3}  
   %            |\Phip(\omega)|\lesssim 1, \quad \xi\in I_p.
   %          \end{align}
   %          
      then, there exists a positive constant $C$ such that 
      \begin{equation}
       \label{eq_coeff_ops}
       \sum_{\varb}2^{2js}\abs{\Lprod{f}{\phipkn}}^2 \leq C \norm{f}^2_s,
      \end{equation}
   where $C$ depends on $\nu$.   
      \end{thm}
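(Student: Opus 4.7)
The plan is to reduce the weighted estimate to an ordinary Bessel bound by absorbing the $2^{2js}$ weight into the analyzing functions $\widetilde{\phi_{j,k,\lambda}}$ of Lemma~\ref{lem:1Bessel}, so that the two Bessel lemmas do all of the real work. Introduce $g\in L^2(\R)$ defined by $\widehat{g}(\omega)=(1+|\omega|)^s\widehat{f}(\omega)$, so that $\norm{g}_{L^2(\R)}=\norm{f}_s$. A direct computation from the formulas for $\widehat{\phipkn}$ and $\F(\widetilde{\phi_{j,k,\lambda}})$ yields the identity
\[
\F\pt{\widetilde{\phi_{j,k,\lambda}}}(\omega)=\frac{2^{js}}{(1+|\omega|)^s}\,\F(\phipkn)(\omega),
\]
so Plancherel produces the key relation $2^{js}\Lprod{f}{\phipkn}=\Lprod{g}{\widetilde{\phi_{j,k,\lambda}}}$. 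Consequently \eqref{eq_coeff_ops} is equivalent to the Bessel-type inequality
\[
\sum_{\varb}\abs{\Lprod{g}{\widetilde{\phi_{j,k,\lambda}}}}^2\lesssim\norm{g}_{L^2(\R)}^2,
\]
which is what I would prove directly.

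For each pair $(j,k)$ I would split $g=g^{\rm in}_{j,k}+g^{\rm out}_{j,k}$ in frequency via $\widehat{g^{\rm in}_{j,k}}=\widehat{g}\,\chi_{E_{j,k}}$ and $\widehat{g^{\rm out}_{j,k}}=\widehat{g}\,\chi_{\R\setminus E_{j,k}}$, and use $|a+b|^2\leq 2|a|^2+2|b|^2$ to treat the two pieces separately. On the interior piece, Lemma~\ref{lem:1Bessel} gives the uniform estimate
\[
\sum_{\lambda\in\nu\Z}\abs{\Lprod{g^{\rm in}_{j,k}}{\widetilde{\phi_{j,k,\lambda}}}}^2\leq C_\nu\,\norm{g^{\rm in}_{j,k}}_{L^2(\R)}^2=C_\nu\int_{E_{j,k}}\abs{\widehat{g}(\omega)}^2\,\ud\omega,
\]
while the exterior piece satisfies the support hypothesis of Lemma~\ref{lem:2Bessel} and is therefore controlled by
\[
\sum_{\lambda\in\nu\Z}\abs{\Lprod{g^{\rm out}_{j,k}}{\widetilde{\phi_{j,k,\lambda}}}}^2\leq\frac{C_\nu}{2^{j(2\alpha-1)}}\,\norm{g}_{L^2(\R)}^2.
\]

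Summing over $(j,k)\in\N\times K$, the out-contribution is bounded by $|K|\,C_\nu\,\norm{g}_{L^2(\R)}^2\sum_{j\geq 0}2^{-j(2\alpha-1)}$, a convergent geometric series since $\alpha>\tfrac{1}{2}$. For the in-contribution I would invoke the uniform finite overlap of the enlarged family $\ptg{E_{j,k}}$: this is precisely the combinatorial fact already established inside the proof of Corollary~\ref{cor:sommanorm}, namely that for any fixed $\omega$ only a bounded number of indices $(j,k)$ satisfy $\omega\in E_{j,k}$. This yields $\sum_{j,k}\int_{E_{j,k}}|\widehat{g}|^2\,\ud\omega\lesssim\norm{g}_{L^2(\R)}^2=\norm{f}_s^2$, and combining the two bounds establishes~\eqref{eq_coeff_ops}.

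The step I expect to require the most care is the uniform finite overlap of the enlarged sets $\ptg{E_{j,k}}$: admissibility condition~$iv$ in Definition~\ref{def:adm} only gives finite overlap for the original covering $\ptg{I_{j,k}}$, and one must use the scale comparability~$v$ to prevent the $2^{j-1}$-neighborhoods from accumulating across disparate scales. As noted, this is essentially the content of the remark already made in the proof of Corollary~\ref{cor:sommanorm}; once it is in hand, the rest of the argument is just Plancherel, the splitting trick, and the two Bessel lemmas.
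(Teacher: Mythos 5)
Your proposal is correct and follows essentially the same route as the paper: the identity $2^{js}\Lprod{f}{\phipkn}=\Lprod{g}{\widetilde{\phi_{j,k,\lambda}}}$ is exactly the paper's reduction to \eqref{eq:thm3_last}, and your frequency splitting of $g$ by $\chi_{E_{j,k}}$ coincides with the paper's splitting $f=f_{j,k,1}+f_{j,k,2}$ (the weight $(1+|\omega|)^s$ commutes with the cutoff), after which both arguments invoke Lemma~\ref{lem:1Bessel}, Lemma~\ref{lem:2Bessel}, the finite overlap of the $E_{j,k}$, and the convergence of $\sum_j 2^{-j(2\alpha-1)}$. Your closing remark correctly identifies the one point the paper states without detail, namely that the enlarged sets $E_{j,k}$ inherit the uniform finite intersection property from the admissibility conditions.
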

   %%%%%%%%%%%%%%%%%%%%%%%%%%%%%%%%%%%%%%%%%%%%%%%%%%%%%%%%%%%%%%%%%%%%%%%%%%%%%%%%%%%%%%%%%%%%%%%%%%%%%%%%%%%%%%%%%%%%%%%%%%%%%%%%  
   %%%%%%%%%%%%%%%%%%%%%%%%%%DIMOSTRAZIONE LIMITATEZZA OPERATORE DI COEFFICIENTI%%%%%%%%%%%%%%%%%%%%%%%%%%%%%%%%%%%%%%%%%%%%%%%%%%% 
   %%%%%%%%%%%%%%%%%%%%%%%%%%%%%%%%%%%%%%%%%%%%%%%%%%%%%%%%%%%%%%%%%%%%%%%%%%%%%%%%%%%%%%%%%%%%%%%%%%%%%%%%%%%%%%%%%%%%%%%%%%%%%%%%  
     \begin{proof}
    
      For each index $j$ set
      \begin{align*}
        f_{j,k,1}(t)= \F^{-1}_{\omega\to t} \pt{\chi_{E_{j,k}(\omega)} \widehat{f}(\omega)},\quad
        f_{j,k,2}(t)= 1- f_{j,k,1}(t),
      \end{align*}
         where $E_{j,k}$ is as in \eqref{eq:epnu}.	
   Notice that
    \(
         E_{j,k} \cap E_{j',k}=\emptyset, \quad \mbox{if } |j-j'|\geq 2.
    \)
       By Plancherel Theorem, using the notations of Lemmata \ref{lem:1Bessel}, \ref{lem:2Bessel}, 
      we rewrite
      \begin{align}
        &\sum_{\varb}2^{2js}\abs{\Lprod{f}{\phipkn}}^2 =
        \sum_{\varb}2^{2js}\abs{\Lprod{\F (f)}{\F(\phipkn)}}^2\nonumber\\
        &=\sum_{\varb}\abs{\Lprod{\F (f) (1+|\omega|)^s}{ \frac{2^{js}}{(1+|\omega|)^s}\F(\phipkn)}}^2\nonumber\\
        &=\sum_{\varb}\abs{\Lprod{\F (f) (1+|\omega|)^s}{\frac{1}{2^{j/2}} e^{-\pii \frac{\lambda}{\bap}  (\cdot)}\Phip(\cdot) \frac{2^{js}}{(1+|\omega|)^s} } }^2\nonumber\\
      &=\sum_{\varb}\abs{\Lprod{\F (f) (1+|\omega|)^s}{  \F( \widetilde{\phi_{j,k,\lambda}) }}}^2\label{eq:thm3_last}.
   \end{align}
   Splitting $f = f_{j,k,1}+f_{j,k,2}$ for each $j$ we get
   \begin{align*}   
   &\sum_{\varb}2^{2js}\abs{\Lprod{f}{\phipkn}}^2 \\
      &\lesssim\pt{\sum_{\varb}\abs{\Lprod{\F (f_{j,k,1}) (1+|\omega|)^s}{  \F( \widetilde{\phi_{j,k,\lambda}) }} }^2
      \!+\! \sum_{\varb}\abs{\Lprod{\F (f_{j,k,2})(1+|\omega|)^s}{  \F( \widetilde{\phi_{j,k,\lambda}) }} }^2}.
      \end{align*}
  
     Notice that $f_{j,k,2}$ satisfies the hypothesis of Lemma \ref{lem:2Bessel}, therefore
     \begin{align*}
       \sum_{\varb} 2^{2js} \abs{\Lprod{f}{\phipkn}}^2 &\lesssim \frac{1}{\nu}    \sum_{j,k}\pt{\norm{f_{j,k,1})}_s^2  +\frac{{1}} {2^{j(2\alpha-1)} } \norm{\F\pt{f_{j,k,2}} (1+|\omega|)^s}^2}\\
   %\\
    %   &{}\qquad\lesssim 
   %\frac{1}{\nu}    \sum_{j,k}\pt{\norm{\F\pt{f_{j,k,1}} (1+|\omega|)^s}^2  +\frac{{1}} {2^{j(2\alpha-1)} } \norm{\F\pt{f_{j,k,2}} (1+|\omega|)^s}^2}\\
   \\
        &\lesssim \nu^{-1}\|f\|^2_s,
     \end{align*}
   as desired. Indeed, since the partition $\ptg{I_{j,k}, I_\bullet}$ is admissible then $E_{j,k}$
   have the (uniform) finite intersection property as well, then
   \[
   \sum_{j,k}\norm{f_{j,k,1} }_s^2 \leq \sum_{j,k}\norm{f}^2_{H^s(E_{j,k})}\lesssim \norm{f}^2_s,
   \]   
   and 
   \[
   \sum_{j,k}\frac{{1}} {2^{j(2\alpha-1)} } \norm{\F\pt{f_{j,k,2}} (1+|\omega|)^s}^2 \leq  \norm{f}^2_s\sum_{j,k}\frac{{1}} {2^{j(2\alpha-1)} }\lesssim  \norm{f}^2_s.
   \]
   \end{proof}
   \begin{cor}
   The analysis operator 
   \begin{align*}
   C:  L^2(\R) &\longrightarrow  \ell^2(\Gamma)\\
   	f &\longmapsto \ptg{\Lprod{f}{\phi_{j,k,\lambda}}}_{\varb}
   \end{align*}
   is continuous. Hence, the same is true for the frame operator $S=C^{*}C$.
   \end{cor}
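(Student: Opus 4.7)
The plan is to extract this corollary almost immediately from Theorem \ref{thm:Bound}. Since an $s$-admissible pair (with any $s\geq 0$) satisfies condition \eqref{eq_fcnd} — this is exactly the second bound in \eqref{eq:con2}, and the decay exponent $\alpha>d/2\geq \tfrac12$ appearing in Definition \ref{def-sadm} is more than enough to invoke Theorem \ref{thm:Bound} — one may specialize \eqref{eq_coeff_ops} to $s=0$. Doing so yields
\[
\sum_{j,k,\lambda\in\Gamma}\bigl|\langle f,\phipkn\rangle\bigr|^2 \;\leq\; C\,\|f\|_{L^2(\R)}^2,
\]
which is, by definition, the estimate $\|Cf\|_{\ell^2(\Gamma)}^2\leq C\|f\|_{L^2}^2$. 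Thus $C$ is a bounded linear map from $L^2(\R)$ into $\ell^2(\Gamma)$, with operator norm controlled by $\sqrt{C}$.

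For the frame operator, I would simply observe that $C^{*}\colon \ell^2(\Gamma)\to L^2(\R)$ is the Hilbert-space adjoint of a bounded operator, hence bounded with $\|C^{*}\|=\|C\|$. Therefore $S=C^{*}C$ is continuous as a composition of two continuous operators, with $\|S\|\leq \|C\|^2$. No further estimation is needed, and the explicit form $C^{*}g=\sum_{j,k,\lambda}g_{j,k,\lambda}\,\phipkn$ (the synthesis operator) can be recorded for use in later sections.

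The only mildly delicate point — though it is not really an obstacle — is to note that the hypothesis \eqref{eq_fcnd} of Theorem \ref{thm:Bound} is stated for the \emph{Stockwell-like} elements $\phipkn$, and that the corollary as stated only concerns those elements; the contribution of the $\varphi_\bullet$ piece is handled separately by the remark following Definition \ref{def-sadm}, which gives $\sum_{\lambda}|\langle T_\lambda \varpb,f\rangle|^2\lesssim \|f\|_{L^2}^2$ via \cite{CH03}[Thm 7.2.3]. Combining both contributions shows that the full analysis operator for $F(\varpb,\varphi,\Gamma)$ is continuous on $L^2(\R)$, so there is no loss in the corollary's statement.
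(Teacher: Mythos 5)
Your proposal is correct and is exactly how the corollary follows in the paper: specialize Theorem \ref{thm:Bound} to $s=0$ (the $0$-admissibility hypothesis being implied by $s$-admissibility) to get boundedness of $C$, and then boundedness of $S=C^{*}C$ is immediate. The side remark about the $\varphi_\bullet$ contribution is a correct and harmless addition.
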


   \subsection{Lower bound}
   Using the hypothesis on the window functions, we show that there exists a (uniform) lower
   bound for the $H^s$-norm. 
   \begin{lem}
     \label{lem:low}
     Let $s\geq0$ and $\varpb,\varphi$ be a system of functions such that
   \[
   \abs{\Phi_{\bullet}\pt{\omega}} \geq a, \:\omega \in I_{\bullet}
   \]  
   and  for all $\omega \in \R\backslash I_{\bullet}$ there exists $\bar{j},\bar{k}$ such that $\omega \in I_{\bar{j},\bar{k}}$ and
     \[
   	   |\Phi_{\bar{j},\bar{k}}(\omega)|\geq a
     \]
     and the constant $a$ does not depend on $\bar{j},\bar{k}$.
     Then
     \begin{equation}
         \label{loL2}
              \frac{\abs{\Phi_{\bullet}(\omega)}^2}{\pt{1+|\omega|}^{2s}}+ \sum_{j,k\in\Gamma} \frac{2^{2js}}{\pt{1+|\omega|}^{2s}} |\Phip(\omega)|^2 \geq a^2, \quad \mbox{a.e. }\omega\in \R\;.
     \end{equation}
   \end{lem}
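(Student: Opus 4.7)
My plan is to argue pointwise: fix an arbitrary $\omega\in\R$ and use property (ii) of Definition~\ref{def:adm} to place $\omega$ inside either $I_\bullet$ or some $I_{\bar j,\bar k}$. In each case I produce a single term of the left hand side of~\eqref{loL2} that is already bounded below by a constant multiple of $a^2$, which suffices since every other summand is nonnegative.

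\textbf{Case 1: $\omega\in I_\bullet$.} The hypothesis gives $|\Phi_\bullet(\omega)|\ge a$. Since $I_\bullet$ is a bounded neighborhood of the origin (property (iii) of admissibility), $(1+|\omega|)$ is uniformly bounded on $I_\bullet$ by some $M>0$. Hence
\[
\frac{|\Phi_\bullet(\omega)|^2}{(1+|\omega|)^{2s}}\;\ge\; \frac{a^2}{M^{2s}},
\]
and the sum in~\eqref{loL2} is nonnegative.

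\textbf{Case 2: $\omega\in I_{\bar j,\bar k}$ for some $(\bar j,\bar k)$.} The hypothesis gives $|\Phi_{\bar j,\bar k}(\omega)|\ge a$. Property (v) of admissibility implies $c_{\min}\,2^{\bar j}\le|\omega|\le c_{\max}\,2^{\bar j}$, so $(1+|\omega|)\lesssim 2^{\bar j}$ uniformly in $(\bar j,\bar k)$. Consequently
\[
\frac{2^{2\bar j s}}{(1+|\omega|)^{2s}}\;\gtrsim\;1,
\]
with the implicit constant depending only on $s$ and $c_{\max}$. Keeping just the single $(\bar j,\bar k)$ term in the sum of~\eqref{loL2} therefore yields
\[
\frac{2^{2\bar j s}}{(1+|\omega|)^{2s}}\,|\Phi_{\bar j,\bar k}(\omega)|^2\;\gtrsim\;a^2,
\]
while the $I_\bullet$ contribution and all other terms are nonnegative.

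Since almost every $\omega\in\R$ falls into one of the two cases, the lower bound~\eqref{loL2} follows. There is no real technical obstacle: the argument is a clean case split, and the main bookkeeping is only to track that the implicit lower constant (which is $a^2$ up to factors involving $M$, $c_{\max}$, and $s$) is uniform in $\omega$. This is consistent with the $\lesssim/\gtrsim$ convention used throughout the paper.
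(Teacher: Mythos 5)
Your proposal is correct and follows essentially the same route as the paper's own (very brief) proof: a case split on whether $\omega$ lies in $I_\bullet$ or in some $I_{\bar j,\bar k}$, using $(1+|\omega|)\sim 1$ on $I_\bullet$ and $(1+|\omega|)\sim 2^{\bar j}$ on $I_{\bar j,\bar k}$ (from property (v) of admissibility) to bound a single term from below. Your remark that the constant is really $a^2$ only up to a uniform multiplicative factor applies equally to the paper's argument, so nothing further is needed.
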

      \begin{proof}
   For $s=0$, the statement is trivial while for general $s$, notice that 
   \[
   (1+|\omega|)\sim 2^j, \qquad \omega \in I_{j,k},
   \]  
   while if $\omega \in I_{\bullet}$, then $(1+|\omega|)\sim 1$.
      \end{proof}
   \begin{rem}
    Inequality \eqref{loL2} could be used as hypothesis on the window functions weaker then ours. 
    Since it is quite cumbersome to be checked, we prefer to work with a more transparent assumptions.
   \end{rem}

   We can state the main result on the lower bound for the $H^s$ frame.
   % 
   %Theorem LIMITATEZZA da sotto 
   % 
   \begin{thm}\label{thm_down_op}
   Let $s\geq0$ and $\varpb,\varphi$ be $s$-admissible. Then, there exist $\nu_0>0$ and $C>0$ such that for every $\nu\in (0,\nu_0)$, we have
      \begin{equation}\label{eq_coeff_ops_low}
   \sum_{\lambda\in\nu\Z}|\Lprod {T_{\lambda} \varpb}{f}|^2  + \sum_{\varb}2^{2js}\abs{\Lprod{f}{\phipkn}}^2 \geq C\norm{f}^2_s,
      \end{equation}
   where the constant $C$ depends on $\nu$.
   \end{thm}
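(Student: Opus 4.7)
Following the classical Casazza--Christensen--Tolimieri strategy for Gabor frames, I would proceed via a Walnut-type decomposition of the frame operator: extract the diagonal as a positive multiplier on the Fourier side (using Lemma~\ref{lem:low}), then dominate the off-diagonal correction by taking $\nu$ small. For each fixed $(j,k)$, after Plancherel the map $\lambda \mapsto \Lprod{f}{\phipkn}$ is $2^{-j/2}$ times the Fourier integral of $\hat f\,\overline{\Phip}$ evaluated on the arithmetic progression $(\nu/2^j)\Z$; the Parseval identity applied to the corresponding periodization (i.e., Poisson summation) gives
\[
\sum_{\lambda\in\nu\Z}\abs{\Lprod{f}{\phipkn}}^2 = \frac{1}{\nu}\sum_{m\in\Z}\int\hat f(\omega)\,\overline{\hat f(\omega-m 2^j/\nu)}\,\overline{\Phip(\omega)}\,\Phip(\omega-m 2^j/\nu)\,d\omega,
\]
and analogously for $\sum_\lambda\abs{\Lprod{f}{T_\lambda \varpb}}^2$, with $2^j$ replaced by $1$ in the shift.

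\textbf{Diagonal lower bound.} Collecting the $m=0$ contributions with weights $1$ and $2^{2js}$ produces
\[
\frac{1}{\nu}\int\abs{\hat f(\omega)}^2\Bigl(\abs{\Phi_\bullet(\omega)}^2+\sum_{j,k}2^{2js}\abs{\Phip(\omega)}^2\Bigr)d\omega,
\]
and by Lemma~\ref{lem:low} the bracketed expression is $\geq a^2(1+\abs{\omega})^{2s}$ almost everywhere; hence this portion alone is at least $\frac{a^2}{\nu}\norm{f}_s^2$.

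\textbf{Off-diagonal bound.} For $m\neq 0$, a Cauchy--Schwarz argument on each summand followed by the standard Walnut-type symmetrization swapping $\hat f(\omega)\leftrightarrow \hat f(\omega-m 2^j/\nu)$ gives $\abs{R_{\neq 0}(f)}\leq \tfrac{T(\nu)}{\nu}\norm{f}_s^2$, where
\[
T(\nu):=\sup_\omega\frac{1}{(1+\abs{\omega})^{2s}}\Bigl[\sum_{m\neq 0}\abs{\Phi_\bullet(\omega)\Phi_\bullet(\omega-m/\nu)}+\sum_{j,k,m\neq 0}2^{2js}\abs{\Phip(\omega)\Phip(\omega-m 2^j/\nu)}\Bigr].
\]
The key claim is that $T(\nu)\to 0$ as $\nu\to 0$. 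Indeed, by admissibility each $\omega$ lies in only finitely many $I_{j,k}$; for any $m\neq 0$ and $\nu$ sufficiently small, $\omega-m 2^j/\nu$ sits at distance $\gtrsim \abs{m}2^j/\nu$ from $I_{j,k}$, so Definition~\ref{def-sadm}.i forces $\abs{\Phip(\omega-m 2^j/\nu)}\lesssim 2^{j/2}(\abs{m}2^j/\nu)^{-(\alpha+s)}$. Coupled with the normalization $2^{2js}(1+\abs{\omega})^{-2s}\sim 1$ at the resonant scale and the extra pointwise decay of $\Phip(\omega)$ away from $I_{j,k}$ in the non-resonant regime, the $(j,k,m)$-summation collects a favorable power of $\nu$. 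The analogous treatment for the $\Phi_\bullet$-contribution uses the decay bound $\abs{\Phi_\bullet(\omega)}\lesssim (1+\abs\omega)^{-\alpha}$ together with $\abs{\omega-m/\nu}\gtrsim \abs{m}/\nu$.

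\textbf{Conclusion and main obstacle.} The two estimates combine to give a net lower bound of $\frac{a^2-T(\nu)}{\nu}\norm{f}_s^2$; selecting $\nu_0$ so that $T(\nu)\leq a^2/2$ for every $\nu\in(0,\nu_0)$ completes the proof with $C=a^2/(2\nu)$. The hard step is precisely the smallness of $T(\nu)$: one must balance the $2^{2js}$ weighting, the $(1+\abs{\omega})^{-2s}$ normalization, and the polynomial decay of $\Phip$ and $\Phi_\bullet$. The $j$-sum has to be split at the resonant scale, with the resonant and non-resonant regimes estimated separately, in order to extract both the convergence of the $(j,k,m)$-series and a positive power of $\nu$ — this is the place where the Stockwell scaling forces us to go slightly beyond the usual single-lattice Gabor CCT estimate.
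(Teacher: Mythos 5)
Your overall architecture matches the paper's: a Walnut/Daubechies-type representation of the (weighted) frame operator, the $m=0$ diagonal bounded below by $a^2\norm{f}_s^2$ via Lemma~\ref{lem:low}, and the $m\neq 0$ remainder to be made small by shrinking $\nu$. The diagonal step is correct. The problem is in your off-diagonal estimate.

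You propose to control the remainder by a condition-(A)-type supremum
\[
T(\nu)=\sup_\omega\frac{1}{(1+\abs{\omega})^{2s}}\sum_{j,k}\sum_{m\neq 0}2^{2js}\abs{\Phip(\omega)}\,\abs{\Phip(\omega-m 2^j/\nu)}
\]
and claim $T(\nu)\to 0$. This quantity is in general not even finite under the standing hypothesis $\alpha>\tfrac12$. Fix $\omega$ and take the (finitely many) pairs $(j,k)$ with $\omega\in E_{j,k}$: there the first factor is only bounded, $\tfrac{2^{js}}{(1+\abs\omega)^s}\abs{\Phip(\omega)}\lesssim 1$, while the shifted factor obeys, at best, $\abs{\Phip(\omega-m2^j/\nu)}\lesssim 2^{j/2}\pt{2^j(\abs{m}-2\nu)/\nu}^{-(\alpha+s)}$, so the $m$-sum behaves like $\sum_{m\neq0}\abs{m}^{-\alpha}$ (after the weight transfer of Lemma~\ref{L_uplow}), which diverges for $\alpha\in(\tfrac12,1]$. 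The sup-of-absolute-sums bound inherently costs one power of $m$ compared to what the hypotheses give you; it would only close for $\alpha>1$, which is strictly stronger than Definition~\ref{def-sadm}. (There is also a smaller inaccuracy: after the symmetrization $\widehat f(\omega)\leftrightarrow\widehat f(\omega-m2^j/\nu)$ the Sobolev weight must be split as $(1+\abs\omega)^{-s}(1+\abs{\omega-m2^j/\nu})^{-s}$, not placed entirely at $\omega$.)

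The paper closes this gap differently, and you need some version of its device. It splits $f=f_{\sigma,1}+f_{\sigma,2}$ with $\widehat{f}_{\sigma,1}=\widehat f\,\chi_{E_\sigma}$, so the off-diagonal term decomposes into $R_{1,1}+R_{1,2}+R_{2,1}+R_{2,2}$. Then $R_{1,1}=0$ identically (for $\nu<\tfrac12$ the shift by $m2^\sigma/\nu$ moves $E_\sigma$ off itself), $R_{2,2}\geq 0$ (it is a sum of squares, so it can simply be discarded from a lower bound), and only the cross terms need estimating. For those, the compact localization of $f_{\sigma,1}$ in $E_\sigma$ lets one apply Cauchy--Schwarz so that the $m$-dependence ends up inside an $\ell^2(\sigma,m)$ norm, producing $\sum_{m\neq0}(\abs{m}-2\nu)^{-2\alpha}$ (convergent for all $\alpha>\tfrac12$) together with the factor $\nu^{2\alpha}$ that makes the cross terms small. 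Without the $f_1/f_2$ splitting and the passage to squared sums in $m$, your argument cannot reach the stated generality.
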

   \begin{proof}
   Consider the (modified) frame operator
   \[
   S^{s}f(x) = S^s_{\varpb} f(x) + S_{\varphi}^s f(x),
   \]
   where
   \begin{align*}
   S^s_{\varpb} f(x) {}= \sum_{\lambda}\Lprod{f}{\varphi_{\bullet,\lambda}}\varphi_{\bullet,\lambda}\,, \qquad 
   S_{\varphi}^s f(x) {}= \sum_{j,k,\lambda}2^{2js}\Lprod{f}{\phipkn}\phipkn(x).
   \end{align*}  
   We use Daubechies-like  (or Walnut-like) representation formula, see \cite[Lemma 4.6]{BB15}, to rewrite 
     \begin{align}
   \label{eq_wal3}
       \Lprod{S^s f(x)}{f(x)} &= 
       \langle \sum_{\sigma,m\in\Z}4^{\sigma s} T_{{\frac{m}{\nu}\bas}}\pt{\overline{\Phi_{\sigma} \widehat{f}}} \Phi_{\sigma}, \widehat{f}\rangle_{L^2(\R)},
   \end{align}
   where $\sigma \in \ptg{\bullet, \pt{j,k}}$ and, with an abuse of notation, we set
   \[
   2^{\sigma} = 2^j, \textrm{ for } \sigma = \pt{j,k}, \qquad 2^{\sigma} = 1, \textrm{ for } \sigma = \bullet.
   \]
   % Using the s-admissibility - cf.~\eqref{eq:con2} - we can reinterpret equations~\eqref{eq:bd_ups1}-\eqref{eq:bd_ups2} as
   % \begin{equation}
   % \begin{split}
   %     \label{eq:bd_ups3}    
   %      \frac{2^{js}}{(1+|\omega|)^{s}} |\Phi_{\sigma}(\omega)| &\lesssim \frac{2^{j/2}}{\pt{1+d(\omega, I_{\sigma})}^{\alpha}}  \quad \mbox{a.e. }\omega \notin I_{\sigma}
   %      \\
   %      \frac{2^{js}}{(1+|\omega|)^{s}} |\Phi_{\sigma}(\omega)| &\lesssim 1  \quad \mbox{a.e. }\omega \in I_\sigma.      
   %   \end{split}
   % \end{equation}
   First, for $m=0$ we apply~\eqref{loL2} and obtain
     \begin{align*}
   % &\Lprod{S^s f(x)}{f(x)} \\
   &    \langle \sum_{\sigma}4^{\sigma s}\frac{\abs{\Phi_{\sigma}}^2}{(1+|\omega|)^{2s}} (1+|\omega|)^{s}\widehat{f}, (1+|\omega|)^{s}\widehat{f}\rangle_{L^2(\R)}\\
   &= \langle\ptq{ \frac{|\Phi_{\bullet}|^2}{(1+|\omega|)^{2s}} + \sum_{j,k} 4^{\sigma s}\frac{|\Phip|^2}{(1+|\omega|)^{2s}}} (1+|\omega|)^s \hat{f}, (1+|\omega|)^s \hat{f}\rangle_{L^2(\R)}\geq a^2 \norm{f}_s^2.     
     \end{align*}  

   Hence, 
    \begin{align}
       \Lprod{S^s f(x)}{f(x)} \geq a^2 \norm{f}_s^2 +\langle \sum_{\sigma}4^{\sigma s}\sum_{m\neq 0}T_{\frac{m}{\nu}\bas}\pt{\overline{\Phi_{\sigma} \widehat{f}}} \Phi_{\sigma}, \widehat{f}\rangle_{L^2(\R)} .
   \end{align}
       We study the last term of the above sum  by splitting the different components. Precisely,
   \begin{align}
   \nonumber
   &\langle \sum_{\sigma}4^{\sigma s}
   \sum_{m\neq0} T_{\frac{m}{\nu}\bas}\pt{\overline{\Phi_{\sigma} \widehat{f}}} \Phi_{\sigma}, 
   \widehat{f}\rangle_{L^2(\R)}=
   R_{1,1}+R_{1,2}+R_{2,1}+R_{2,2}
   \end{align}
   where
   \begin{align}
   \label{eq:f1f1}
   R_{1,1}&=     \langle \sum_{\sigma} 4^{\sigma s}\sum_{m\neq0} T_{\frac{m}{\nu}\bas}\pt{\overline{\Phi_{\sigma} \widehat{f_{\sigma,1}}}} \Phi_{\sigma} , \widehat{f_{\sigma,1}}\rangle_{L^2(\R)}  	
   \\
         \label{eq:f2f1}
         R_{2,1}&=\langle \sum_{\sigma}4^{\sigma s}\sum_{m\neq0} T_{\frac{m}{\nu}\bas}\pt{\overline{\Phi_{\sigma} \widehat{f_{\sigma,2}}}} \Phi_{\sigma}, \widehat{f_{\sigma,1}}\rangle_{L^2(\R)}
   \\
         \label{eq:f1f2}
         R_{1,2}&=\langle\sum_{\sigma}4^{\sigma s}\sum_{m\neq0} T_{\frac{m}{\nu}\bas}\pt{\overline{\Phi_{\sigma} \widehat{f_{\sigma,1}}}} \Phi_{\sigma}, \widehat{f_{\sigma,2}}\rangle_{L^2(\R)}
         \\
         \label{eq:f2f2}
               R_{2,2}&=\langle \sum_{\sigma}4^{\sigma s}\sum_{m\neq0} T_{\frac{m}{\nu}\bas}\pt{\overline{\Phi_{\sigma} \widehat{f_{\sigma,2}}}} \Phi_{\sigma}, \widehat{f_{\sigma,2}}\rangle_{L^2(\R)}, 
     \end{align}
   and
   \[
   f  = f_{\sigma,1} + f_{\sigma,2}, \textrm{ and } \widehat{f}_{\sigma,1} = \widehat{f}\chi_{E_{\sigma}},
   \]
    $E_{\sigma}$ is defined in \eqref{eq:epnu}.
   We want to show that the terms in (\ref{eq:f1f1}-\ref{eq:f1f2}) go to zero as $\nu$ does. Precisely, we show that there exists $\nu_0>0$ such that for each $\nu<\nu_0$ one has
   \begin{equation}
   \label{eq:tozero}
   \begin{split}
    \frac{a}{2}\norm{f}^2_s   &\geq R_{1,1}+R_{2,1}+R_{1,2}   \,.
   \end{split}
   \end{equation}
   Also, we know  that $R_{2,2}\geq 0$.
   Indeed, we can rewrite \eqref{eq:f2f2} as
   \[
   \sum_{\sigma} 2^{2s\sigma}\abs{\Lprod{f_{\sigma,2}}{\varphi_{\sigma,\lambda}}_{L^2(\R)}}^2,
   \]
   which is quadratic hence positive, as desired.% Then (\ref{eq:tozero}-\ref{eq:f2f2}) yields \eqref{eq_coeff_ops_low}.
   \medskip\\
   \textbf{Step 1} Show that \eqref{eq:f1f1} is identically zero.\medskip\\
   Equation \eqref{eq:f1f1} vanishes for all  $m\neq0$, since the support of $\widehat{f}_{\sigma,1}$ is compact.\\
   Indeed, assuming $\nu<\frac{1}{2}$, if $\omega\in E_{\sigma}$, then 
   \[
   \omega - \frac{m}{\nu}\bas \notin E_{\sigma}, \quad m\in\Z,
   \] 
   since $|E_{\sigma}| \leq  2^{\sigma+1}$.\\
   \textbf{Step 2.} Show that the term in \eqref{eq:f2f1} goes to zero as $\nu$ does.\medskip\\
   We rearrange the sum as
   \begin{align*}
   &\sum_{\sigma} 4^{s\sigma}\sum_{m\neq 0} \Lprod{\overline{\widehat{f_{\sigma,2}}}\pt{\cdot -m \frac{\bas}{\nu}}\Phi_{\sigma}}{{\Phi_{\sigma}}\pt{\cdot -m \frac{\bas}{\nu}}\widehat{f_{\sigma,1}}}_{L^2(\R)}.
   \end{align*} 
   Multiply and dividing 
   by $(1+\abs{\omega})^{s},\pt{1+\abs{\omega-m \frac{\bas}{\nu}}}^{s} $,
   since $f_{\sigma,1}$ vanishes outside $E_\sigma$, we are led to
   \begin{align*}
   % &\sum_{\sigma}4^{\sigma s}\sum_{m\neq 0} \int_{\R}\pt{1+\abs{\omega -m \frac{\bas}{\nu}}}^s\overline{\widehat{f_{\sigma,2}}}\pt{\omega -m \frac{\bas}{\nu}}\frac{\Phi_{\sigma}(\omega)}{(1+|\omega|)^{s}}
   % \\
   %&\qquad\qquad\qquad\frac{\overline{\Phi_{\sigma}}\pt{\omega -m \frac{\bas}{\nu}}}{\pt{1+\abs{\omega -m \frac{\bas}{\nu}}}^s}(1+|\omega|)^{s}\overline{\widehat{f_{\sigma,1}(\omega)}}\ud\omega
   %\\
    &\sum_{\sigma}4^{\sigma s}\sum_{m\neq 0}\int_{E_{\sigma}}\pt{1+\abs{\omega -m \frac{\bas}{\nu}}}^s\overline{\widehat{f_{\sigma,2}}}\pt{\omega -m \frac{\bas}{\nu}}\frac{\Phi_{\sigma}(\omega)}{(1+|\omega|)^{s}}
   \\
   &\qquad\qquad\frac{\overline{\Phi_{\sigma}}\pt{\omega -m \frac{\bas}{\nu}}}{\pt{1+\abs{\omega -m \frac{\bas}{\nu}}}^s}(1+|\omega|)^{s}\overline{\widehat{f_{\sigma,1}(\omega)}}\ud\omega.
   \end{align*}
   Then by Cauchy-Schwartz Inequality 
   \begin{align}
   \label{eq:bd_sum}
   \langle \sum_{\sigma} 4^{\sigma s}\sum_{m\neq 0} T_{m \frac{\bas}{\nu}}\pt{\overline{\Phi_{\sigma} \widehat{f_{\sigma,2}}}} \Phi_{\sigma}, \widehat{f_{\sigma,1}}\rangle_{L^2(\R)} &\leq 	\sum_{\sigma}\sum_{m\neq 0} c^{1/2}_{\sigma,m}d^{1/2}_{\sigma,m},
   \end{align}
   where
   \begin{align*}
   c_{\sigma,m} &= \int_{E_{\sigma}}\abs{\overline{\widehat{f_{\sigma,2}}}\pt{\omega -m \frac{\bas}{\nu}}}^2\pt{1+\abs{\omega -m \frac{\bas}{\nu}}}^{2s} 4^{\sigma s}\frac{\abs{\Phi_{\sigma}(\omega)}^2}{(1+|\omega|)^{2s}}\ud\omega
   \\
   &\lesssim\int_{E_{\sigma}}\abs{\overline{\widehat{f_{\sigma,2}}}\pt{\omega -m \frac{\bas}{\nu}}}^2\pt{1+\abs{\omega -m \frac{\bas}{\nu}}}^{2s}\ud\omega
   \end{align*}
    and
   \begin{align*}
   d_{\sigma,m} &= \int_{E_{\sigma}}\abs{\overline{\widehat{f_{\sigma,1}}}\pt{\omega}}^2\pt{1+\abs{\omega}}^{2s} 4^{\sigma s}\frac{\abs{\Phi_{\sigma}\pt{\omega -m \frac{\bas}{\nu}}}^2}{\pt{1+\abs{\omega -m \frac{\bas}{\nu}}}^{2s}}\ud\omega
   \\
   &\leq\int_{E_{\sigma}}\abs{\overline{\widehat{f_{\sigma,1}}}\pt{\omega}}^2\pt{1+\abs{\omega}}^{2s} \frac{\bas}{\pt{1+\dist{\omega -m \frac{\bas}{\nu}}{I_{\sigma}}}^{2\alpha}}\ud\omega
   \\
   &\leq \sup_{\omega \in E_{\sigma}}\ptg{\frac{\bas}{\pt{1+\dist{\omega -m \frac{\bas}{\nu}}{I_{\sigma}}}^{2\alpha}}}\int_{E_{\sigma}}\abs{\overline{\widehat{f_{\sigma,1}}}\pt{\omega}}^2\pt{1+\abs{\omega}}^{2s}\ud\omega.
   \end{align*}
   Notice that we have used inequalities \eqref{eq:bd_ups1}, \eqref{eq:bd_ups2}.
   Since
   \[
   \dist{\omega -m \frac{\bas}{\nu}}{I_{\sigma}} \geq \frac{\bas}{\nu}\pt{|m|-2\nu},\quad \omega \in E_{\sigma},
   \]
   if $\nu<\frac{1}{2}$, then $\pt{|m|-2\nu}>0$, for any $m\neq 0$.
   Therefore, 
   \[
   d_{\sigma,m} \leq \nu^{2\alpha}2^{\sigma(1-2\alpha)}\frac{1}{\pt{|m|-2\nu}^{2\alpha}} \norm{f_{\sigma,1}}^2_s.
   \]
   % The coefficients $c_{\sigma,m}$ and $d_{\sigma,m}$ are both $\ell^2$-summable w.r.t. of $m$, precisely
   %   \[
   %       \norm{c_{\sigma,\cdot}
   % }_{\ell^2(\Z)}=\pt{\sum_{m\neq 0} c^{(1)}_{\sigma,m}}^{1/2}\leq \norm{f}_{s}\]
   % \[
   %       \norm{d_{\sigma,\cdot}
   % }_{\ell^2(\Z)}=\pt{\sum_{m\neq 0} |d_{\sigma,m}|}^{1/2}\leq  \nu^{\alpha}\norm{f_{\sigma,1}}_s.
   % \]
   \noindent
   Hence, using the properties of the scalar product and the norm in the space $\ell^2(\Gamma)$ 
   with parameters $(\sigma,m)$, we can write
   %     \[
   % 	\sum_{\sigma, m\neq 0} c^{1/2}_{\sigma, m}d^{1/2}_{\sigma, m} 
   % 	\lesssim  
   % 	\sum_{\sigma, m\neq 0 }\norm{c^{}_{\sigma,\cdot}
   % 	}_{\ell^2(\Z)}\norm{d_{\sigma,\cdot}
   % 	}_{\ell^2(\Z)} \lesssim \norm{f}_s\nu^{\alpha}\sum_{\sigma}\norm{f_{\sigma,1}}_s.
   %     \]

   \begin{align}
   	\nonumber
   	\sum_{\sigma, m\neq 0} c^{1/2}_{\sigma, m}d^{1/2}_{\sigma, m} 
   	&\lesssim  
   	\sum_{\sigma, m\neq 0 }c^{1/2}_{\sigma, m}  \nu^\alpha 2^{\sigma/2(1-2\alpha)} \frac{1}{\pt{|m|-2\nu}^\alpha}\norm{f_{\sigma,1}}_s\\
   	\label{eq:quasifin}
   	&\lesssim
   	\nu^{\alpha} \norm{c^{1/2}_{\sigma, m} 
   	2^{\sigma/2(1-2\alpha)}}_{\ell^2} \norm{\frac{1}{\pt{|m|-2\nu}^\alpha} \norm{f_{\sigma,1}}_s}_{\ell^2}\,.
   \end{align}

   Since $\ptg{I_{j,k}, I_{\bullet}}$ is an admissible partition,
   $\ptg{E_\sigma}$ is as well a partition of $\R$ with the finite intersection property and since 
   $\alpha>\frac{1}{2}$
   \begin{align}
      \nonumber
      \norm{\frac{1}{\pt{|m|-2\nu}^\alpha} \norm{f_{\sigma,1}}_s}^2_{\ell^2} 
      %&=
      %\sum_{\sigma,m\neq 0} \frac{1}{\pt{|m|-2\nu}^{2\alpha}} \norm{f_{\sigma,1}}^2_s\\
      %\nonumber
      &\leq  \pt{\sum_{m\neq 0}\frac{1}{\pt{|m|-2\nu}^{2\alpha}}} \sum_{\sigma}\norm{f_{\sigma,1}}^2_s\\
      \label{eq:primapart}
      &\lesssim \norm{f}^2_s\,.
   \end{align}
   Moreover, 
   \begin{align}
     \nonumber
     \norm{c^{1/2}_{\sigma, m} 2^{\sigma/2(1-2\alpha)}}^2_{\ell^2}&=\sum_{\sigma, m\neq 0}
     c_{\sigma, m} 2^{\sigma(1-2\alpha)}\\
     \nonumber
     &=
     \sum_{\sigma} \pt{ 2^{\sigma(1-2\alpha)} \pt{\sum_{m\neq 0} c_{\sigma,m}}}\\
     \label{eq:secondapart}
     &\lesssim
     \norm{f}_s^2 .
   \end{align}
   Combining \eqref{eq:primapart}, \eqref{eq:secondapart} with \eqref{eq:quasifin} we obtain the assertion.
   \medskip\\

   \textbf{Step 3} Show that the term in \eqref{eq:f1f2} goes to zero as $\nu$ does.\medskip\\
   This follows from the previous step using a change of variable of integration. 
   \end{proof}

   \subsection{proof of theorem~\ref{thm:fram}}
   \begin{proof}
   	The assertion follows combining Theorems \ref{thm:Bound} and \ref{thm_down_op}.
   \end{proof}

\subsection{$H^s$-seminorm discretization}
 In Theorem \ref{thm:fram} we proved that $F(\varpb,\varphi,\Gamma)$ is a frame that describes the $H^s$-norm, provided the parameter $\nu$ is small enough.
 Going through the proof it is clear that, under the hypothesis of the previous section,
 it is not possible to describe the $H^s$-seminorm as well. The problem arises, near the point $\omega=0$,
 in the frequency domain. The main reason is that 
 the covering \eqref{eq:setspl} is too coarse near the origin, in particular,
 the set $I_\bullet$ is too large. 
 Therefore, as in the wavelets case, we need vanishing moments in the origin to  detect high order singularities,
 see \cite{mallat1999wavelet,ME92}. In Figure~\ref{Fig:Vanish}, we show an example of window with such properties.\medskip\\
 \begin{figure}
    \centering
    \begin{subfigure}[b]{0.4\textwidth}
\includegraphics[width=\textwidth]{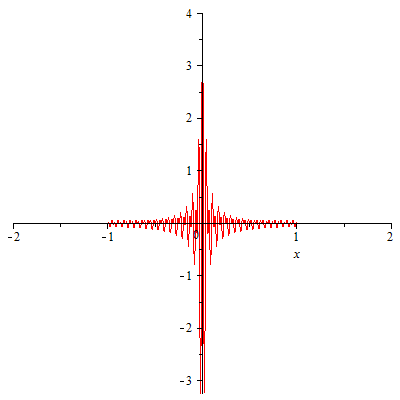}
        \caption{Time domain}
    \end{subfigure}
    \begin{subfigure}[b]{0.4\textwidth}
        \includegraphics[width=\textwidth]{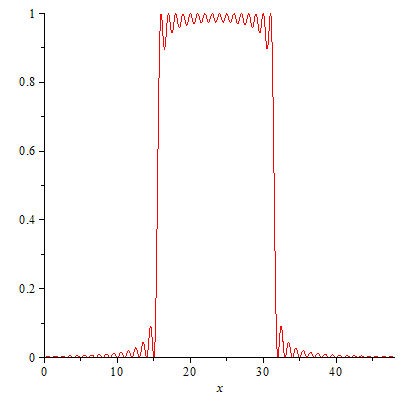}
        \caption{Frequency domain}
    \end{subfigure}
\caption{Frame element $\phipkn(t)$ in both time and frequency. Here, $\phi = \chi_{\ptq{-\frac{1}{2},\frac{1}{2}}}\ast\chi_{\ptq{-\frac{1}{2},\frac{1}{2}}}$, and $j=4$.}\label{Fig:Vanish}
\end{figure}
We introduce the dilation operator $\Dil_a: L^2(\R^d)\to L^2(\R^d)$
 \begin{align*}
	 f(x)\mapsto (\Dil_a f)(x)= a^{-d/2} f\pt{ \frac{ x}{a}}, \quad a\in \R\setminus \ptg{0}.
 \end{align*}
 which is unitary on $L^2(\R^d)$.

Let $\ptg{I_{j,k}}_{j,k\in\Gamma}$ be an admissible partition  in the sense of Definition~\ref{def:adm},  
we consider a new set of indexes
 \[
   \Delta=\ptg{(j, k , \lambda) \mid j\in \Z, k\in K, \lambda \in \nu\Z^d},
 \]
with $|K|<+\infty$ and the sets 
 \begin{align}
   &I_{-j,k}= \ptg{x\in \R^d \mid 2^{2j} x \in I_{j,k} }, & j\in \N \setminus\ptg{0}, \quad k \in K.\nonumber
 \end{align}
\begin{rem}
	These new sets are just contractions of the original ones. We define these sets in order to refine the analysis on $I_\bullet$; clearly the union of these covers $I_{\bullet}$ 
	entirely, except the point $\ptg{0}$ which has zero measure.
\end{rem}
 
 For each $\varphi$, we define the new frame
 \begin{align}
   \nonumber
   \varphi_{j,k,\lambda}(t)&=\varphi_{j,k,\lambda}(t), \quad j\in \N, k\in K,\\
   \label{eq:framesemi}
   \varphi_{-j,k,\lambda}(t)&= T_{\lambda \bap}
                 \Dil_{2^{ 2j}}\varphi_{j,k,0}(t)= \Dil_{2^{2j}} T_{\frac{\lambda}{\bap}}\varphi_{j,k,0}(t) , \quad j \in \N\setminus \ptg{0}, \; k\in K.
 \end{align}
The natural expression of
$\Phi_{-j,k}(\omega)$ for $j\in \N \setminus \ptg{0}$ is:
 \[
   \Phi_{-j,k}(\omega)=  \Phi_{j,k}(2^{2j}\omega).
 \]
 
 We introduce now the admissibility criteria for the seminorm characterization.

\begin{Def}[$s$-admissible for the Sobolev seminorm]
  \label{def-sadmsemi}
Let $s\geq0$, we say that $\varphi$ is $s$-admissible for the Sobolev seminorm
with respect to the partition $\ptg{I_{j,k}}$ if

    \begin{itemize}
      \item[i)]there exists $\alpha>\frac{d}{2}$ such that for all $j\in \N $
	\begin{align}
        \label{eq:con2semi}
        |\Phip(\omega)|\lesssim \left\{
			    \begin{array}{ll}
			      \frac{2^{jd/2} \min\pt{1,|\omega|}^s } {\pt{1+d(\omega, I_{j,k})}^{\alpha+s}},& \quad \omega\neq I_{j,k}\\
			      1 &\quad \omega\in \R^d\,.
			    \end{array}
			   \right.  
       \end{align}
       %the function $\Phi_{j,k}(\omega)$ is defined as in \eqref{eq:G}.
   \item[ii)] There exists a constant $a>0$ such that, for a.e. $\omega\in\R^d$ there exists $\bar{j}\in 
   \Z$ and $\bar{k}\in K$ such that $\omega\in I_{\bar{j},\bar{k}}$ and
  \[
	   |\Phi_{\bar{j},\bar{k}}(\omega)|\geq a.
  \]
  The constant $a$ does not depend on $\bar{j},\bar{k}$.
    \end{itemize}	
\end{Def}
\begin{rem}
The requirement~\eqref{eq:con2semi} means that the frequency windows 
$\Phi_{j,k}$ needs to decay at infinity and to have a polynomial behavior in the origin. 
This property is necessary to avoid accumulation near the zero-frequency. 

As we noticed in Remark \ref{rem_decay_finite} the $\alpha+s$-decay properties can be weekend and 
assumed to hold for $j>j_0$, and to be just of type $\alpha$ for $j\leq j_0$.
\end{rem}

%TODO : aggiungere remark j0 -> infty
   
   We state the main Theorem.
 \begin{thm}
 \label{thm:mainsemi}
  For all $s\geq0$, if $\varphi$ is $s$-admissible for the Sobolev seminorm - cf. Definition \ref{def-sadmsemi},
    then there exits $\nu_0$  such that for all $\nu\in (0, \nu_0)$ the system of functions 
    \[
      \ptg{\varphi_{j,k,\lambda}\mid j, k, \lambda \in \Delta}
    \]
    is a frame representing the $H^s$-seminorm. That is, there exist constant $A, B>0$ such that
   \begin{equation}\label{eq_coeff_ops_low_s}
      A \abs{f}^2_s\leq \sum_{j,k,\lambda \in \Delta} 2^{2js}\abs{\Lprod{f}{\phipkn}}^2 \leq B \abs{f}^2_s.
   \end{equation}
\end{thm}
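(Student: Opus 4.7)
The plan is to mirror the structure of Theorems~\ref{thm:Bound} and \ref{thm_down_op}, replacing the Sobolev-norm weight $(1+\abs{\omega})^s$ by the seminorm weight $\abs{\omega}^s$ throughout and extending the scale index $\sigma$ from $\ptg{\bullet}\cup(\N\times K)$ to all of $\Z\times K$, so as to accommodate the dilated elements $\varphi_{-j,k,\lambda}$ of~\eqref{eq:framesemi}. The strengthened admissibility~\eqref{eq:con2semi}, through its $\min(1,\abs{\omega})^s$ factor, is precisely what is required to make $\Phip$ compatible with the singular weight $\abs{\omega}^{-s}$ at the origin, which is the new feature compared with the norm case.

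First I would establish the seminorm analog of Lemma~\ref{L_uplow}: for every $j\in\Z$ and $k\in K$,
\[
\frac{2^{js}}{\abs{\omega}^s}\abs{\Phip(\omega)}\lesssim \frac{2^{jd/2}}{\pt{1+\dist{\omega}{I_{j,k}}}^\alpha}\ \text{ off } I_{j,k},\qquad \frac{2^{js}}{\abs{\omega}^s}\abs{\Phip(\omega)}\lesssim 1\ \text{ on } I_{j,k}.
\]
For $j\geq 0$, when $\abs{\omega}<1$ the factor $\min(1,\abs{\omega})^s=\abs{\omega}^s$ in~\eqref{eq:con2semi} cancels the $\abs{\omega}^{-s}$ weight, and when $\abs{\omega}\geq 1$ one recovers the triangle-inequality argument of Lemma~\ref{L_uplow} by picking $\bar\omega\in I_{j,k}$ with $\abs{\bar\omega}\sim 2^j$. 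For $j<0$ one exploits $\Phi_{-j,k}(\omega)=\Phip(2^{2j}\omega)$ together with $I_{-j,k}=2^{-2j}I_{j,k}$ to reduce to the previous case. The uniform summability $\sum_{j\in\Z,k\in K}\frac{2^{2js}}{\abs{\omega}^{2s}}\abs{\Phip(\omega)}^2\lesssim 1$ a.e. then follows from the local finiteness of the extended covering. The upper bound in~\eqref{eq_coeff_ops_low_s} is then obtained verbatim as in Lemmas~\ref{lem:1Bessel}--\ref{lem:2Bessel} and Theorem~\ref{thm:Bound}, defining $\widetilde{\phipkn}$ with $\abs{\omega}^s$ in place of $(1+\abs{\omega})^s$ and splitting $f=f_{j,k,1}+f_{j,k,2}$ relative to $E_{j,k}$.

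For the lower bound I would apply the Walnut-type representation~\eqref{eq_wal3}, now with $\sigma$ ranging over $\Z\times K$, to decompose $\Lprod{S^s f}{f}$ into a diagonal $(m=0)$ contribution plus $R_{1,1}+R_{1,2}+R_{2,1}+R_{2,2}$. The diagonal yields $a^2\abs{f}_s^2$ via the trivial seminorm analog of Lemma~\ref{lem:low}, using $\abs{\omega}\sim 2^{\bar j}$ on $I_{\bar j,\bar k}$. For $m\neq 0$, one verifies as in the proof of Theorem~\ref{thm_down_op} that $R_{1,1}\equiv 0$ by support disjointness when $\nu<\frac{1}{2}$, that $R_{2,2}\geq 0$ as a sum of squares, and that $R_{1,2}+R_{2,1}$ can be bounded by Cauchy--Schwarz, choosing $\nu_0$ small enough to absorb the resulting error into $\frac{a^2}{2}\abs{f}_s^2$.

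The main obstacle is exactly the Cauchy--Schwarz estimate on $R_{1,2}+R_{2,1}$ once $\sigma$ ranges over all of $\Z$: in the norm setting the factor $2^{\sigma(1-2\alpha)}$ arising from the off-support decay was summable simply because $\sigma\geq 0$, whereas here one must also control the sum as $\sigma\to -\infty$. This is where the $\min(1,\abs{\omega})^s$ factor in~\eqref{eq:con2semi} earns its keep: combined with the seminorm weight $\abs{\omega}^{-s}$, it supplies the additional low-frequency decay in $\sigma$ needed to make the two-sided sum $\sum_{\sigma\in\Z}$ absolutely convergent and to recover the error bound $\lesssim \nu^\alpha\abs{f}_s^2$.
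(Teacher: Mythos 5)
Your proposal follows essentially the same route as the paper: the seminorm analogue of Lemma~\ref{L_uplow} with the weight $|\omega|^{-s}$ (Lemma~\ref{L_uplowsemi} and Corollary~\ref{cor:sommasemi}), the two Bessel lemmas with the negative scales handled via $\Phi_{-j,k}(\omega)=\Phi_{j,k}(2^{2j}\omega)$ (Lemmata~\ref{lem:1Besselsemi}--\ref{lem:2Besselsemi} and Theorem~\ref{thm:boundcoeffsemi}), and the lower bound repeated as in Theorem~\ref{thm_down_op}. The only imprecision is in your closing remark: the convergence of the two-sided sum over scales in the Cauchy--Schwarz step comes from the dilation structure (for $\sigma=(-j,k)$ the lattice spacing $\nu 2^{j}$ and the compressed sets $E_{-j,k}$ yield the factor $2^{-|j|(2\alpha-1)}$ via the change of variables $\Dil_{2^{-2j}}$, exactly as in the Bessel lemmas), while the $\min(1,|\omega|)^s$ factor in~\eqref{eq:con2semi} is what tames the singular weight $|\omega|^{-s}$ near the origin --- but both ingredients are already present in your argument, so the plan goes through.
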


 The proof Theorem \ref{thm:mainsemi} is essentially divided in the proof of the upper bound and of the lower bound,
 as Theorem \ref{thm:fram}.
 For the upper bound we need two Lemmata similar to Lemma \ref{lem:1Bessel} and Lemma \ref{lem:2Bessel},
 the starting point is an extension of Lemma \ref{L_uplow}.
 \subsection{Preparatory results}
 As we did for the Sobolev norm, we develop the proof in dimension $d=1$ and we start with the upper bound.
 
 \begin{lem}
   \label{L_uplowsemi}
    Let $s\geq 0$ and $\varphi$ be a function such that for all $j \in \N$
    \begin{align}
       \label{eq_fcndsemi}
       |\Phi_{j,k} (\omega)|\lesssim \left\{
			    \begin{array}{ll}
			      \frac{ \min (1,|\omega| )^s 2^{j/2} }{\pt{1+d(\omega, I_j)}^{\alpha+s}},& \quad \omega\notin I_{j,k}\\
			      1 &\quad \omega \in \R,
			    \end{array}
			   \right.
    \end{align}
    for some $\alpha>\frac{1}{2}$.
%       Moreover,
%       \[
%       \widehat\varphi(\xi) \lesssim (1+|\xi|)^{-\alpha}. 
%       \]
%        \item[iii)] 
%          \begin{align}
%          \label{eq:con3}  
%            |\Phi_p(\xi)|\lesssim 1, \quad \xi\in I_p.
%          \end{align}
%          
     Then
     \begin{align}
	\label{eq:bd_ups1semi}    
	\frac{2^{j s}}{|\omega|^{s}} |\Phi_{j,k}(\omega)| &\lesssim \frac{2^{j/2}}{\pt{1+d(\omega, I_{j,k})}^{\alpha}}  &\quad \mbox{a.e. }\omega \notin I_{j,k} 
	\\
	\label{eq:bd_ups2semi}
	\frac{2^{js}}{|\omega|^{s}} |\Phi_{j,k}(\omega)| &\lesssim 1  &\quad \mbox{a.e. }\omega \in I_{j,k},      
     \end{align}
     for all $j \in \N$.
 \end{lem}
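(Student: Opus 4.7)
The plan is to follow exactly the strategy of Lemma~\ref{L_uplow}, replacing the weight $(1+|\omega|)^s$ by the weaker weight $|\omega|^s$, and exploiting the new factor $\min(1,|\omega|)^s$ in~\eqref{eq_fcndsemi} to absorb the singularity at the origin introduced by using $|\omega|^s$ instead of $(1+|\omega|)^s$.

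The inequality~\eqref{eq:bd_ups2semi} would be handled first, and is immediate from admissibility: for $\omega\in I_{j,k}$, item v) of Definition~\ref{def:adm} gives $|\omega|\sim 2^j$, so $2^{js}/|\omega|^s$ is bounded uniformly in $j,k$, and the uniform hypothesis $|\Phi_{j,k}(\omega)|\lesssim 1$ closes the estimate.

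For~\eqref{eq:bd_ups1semi}, I would split the decay bound~\eqref{eq_fcndsemi} into the target factor $(1+d(\omega,I_{j,k}))^{-\alpha}$ times a corrector, reducing the goal to the elementary inequality
\[
\frac{2^{js}\min(1,|\omega|)^s}{|\omega|^s\,(1+d(\omega,I_{j,k}))^s}\lesssim 1.
\]
I would then split into two cases. If $|\omega|\leq 1$, the $|\omega|^s$ in numerator and denominator cancel and one must show $2^{j}\lesssim 1+d(\omega,I_{j,k})$; picking $\bar\omega\in I_{j,k}$ realizing the distance, admissibility gives $|\bar\omega|\sim 2^j$, and the triangle inequality yields $d(\omega,I_{j,k})\geq|\bar\omega|-|\omega|\gtrsim 2^j-1$, which is what we need (for $j=0$ the statement is trivial since $2^{js}=1$). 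If $|\omega|>1$, one instead needs $2^{j}\lesssim |\omega|(1+d(\omega,I_{j,k}))$. Here I would further divide according as $|\omega|\gtrsim 2^j$, in which case the bound is direct, or $|\omega|\ll 2^j$, in which case the triangle inequality forces $d(\omega,I_{j,k})\gtrsim 2^j$ and one multiplies by $|\omega|\geq 1$. Raising to power $s\geq 0$ concludes.

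I do not expect a real obstacle here: the argument is a minor but careful refinement of Lemma~\ref{L_uplow}. The only delicate point is bookkeeping near $\omega=0$ for small $j$, where admissibility constants and the size of $I_\bullet$ could in principle interfere; this is dispatched by the trivial observation that for $j$ bounded, $2^{js}$ is itself bounded and the inequality becomes automatic.
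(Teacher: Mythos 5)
Your proof is correct and takes essentially the same route as the paper: \eqref{eq:bd_ups2semi} follows from $|\omega|\sim 2^j$ on $I_{j,k}$, and \eqref{eq:bd_ups1semi} is obtained by splitting at $|\omega|=1$, cancelling $\min(1,|\omega|)^s$ against $|\omega|^s$ near the origin (with the triangle inequality and $|\bar\omega|\sim 2^j$ giving $1+d(\omega,I_{j,k})\gtrsim 2^j$), and reducing to Lemma~\ref{L_uplow} away from it. The only cosmetic difference is your sub-case split $|\omega|\gtrsim 2^j$ versus $|\omega|\ll 2^j$, where the paper simply notes $|\omega|^s\asymp(1+|\omega|)^s$ for $|\omega|\geq 1$ and repeats the earlier argument.
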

 
  \begin{proof}
 
   The proof of \eqref{eq:bd_ups2semi} is analogous to the proof of \eqref{eq:bd_ups2} of Lemma \ref{L_uplow}.
    Inequality \eqref{eq:bd_ups1semi} is first shown  if $|\omega|\leq 1$. 
    By triangular inequality, \eqref{eq_fcndsemi} implies
   \begin{align*}
     \frac{2^{js}}{|\omega|^s} \abs{\Phi_{j,k}(\omega)}&\lesssim 
     \frac{2^{js}}{|\omega|^s}\frac{2^{j/2} \min\ptg{1, |\omega|}^s} {\pt{1+d(\omega, I_{j,k})}^{\alpha+s}}
     %\lesssim 
     %\frac{2^{js}}{\pt{1+d(\omega,I_{j,k})}^s} \frac{2^{j/2}}{\pt{1+d(\omega,I_{j,k})}^{\alpha}}\\
     %& \lesssim 
     %\frac{2^{js}}{(1+2^j)^s }  \frac{2^{j/2}}{\pt{1+d(\omega,I_{j,k})}^{\alpha}} 
     \lesssim 
     \frac{2^{j/2}}{\pt{1+d(\omega,I_{j,k})}^{\alpha}}.
   \end{align*}
   If $|\omega|\geq 1$ the proof is the same of inequality \eqref{eq:bd_ups1} of Lemma \ref{L_uplow}, since for $|\omega|\geq 1$, $|\omega|^s\asymp (1+|\omega|)^s$.  
 \end{proof}

 We obtain the following Corollary.
 \begin{cor} 
 \label{cor:sommasemi}
 Under the same hypothesis \eqref{eq_fcndsemi}, there exists $ b_s \in \R$ such that
 \begin{align}
   \label{eq:upssemi}
    \sum_{j \in \N, k }\frac{2^{2js}}{|\omega|^{2s}} |\Phi_{j,k}(\omega)|^2\leq b_s, \quad \mbox{a.e. } \omega \in \R. 
 \end{align}	
\end{cor}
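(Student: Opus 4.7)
The plan is to transplant the proof of Corollary~\ref{cor:sommanorm} almost verbatim, with Lemma~\ref{L_uplowsemi} replacing Lemma~\ref{L_uplow}. The key observation is that inequalities~\eqref{eq:bd_ups1semi} and~\eqref{eq:bd_ups2semi} have the very same shape as~\eqref{eq:bd_ups1} and~\eqref{eq:bd_ups2}; all the delicate work of taming the singular factor $|\omega|^{-s}$ near the origin has already been absorbed into Lemma~\ref{L_uplowsemi} through the $\min(1,|\omega|)^s$ built into hypothesis~\eqref{eq_fcndsemi}. Consequently, the combinatorial--geometric argument on the admissible partition carries over unchanged.

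Concretely, I would fix $\omega\in\R\setminus\ptg{0}$ (the measure-zero set $\ptg{0}$ accounts for the ``a.e.'' in the statement). By admissibility property ii) there exists $(\bar{j},\bar{k})$ with $\omega\in I_{\bar{j},\bar{k}}$, and property iv) bounds the number of such pairs by a constant independent of $\omega$. For each of these the trivial bound~\eqref{eq:bd_ups2semi} contributes $\lesssim 1$ to the sum, so altogether the ``near'' part is $O(1)$. For the remaining indices, property v) together with the triangle inequality gives $d(\omega,I_{j,k})\gtrsim 2^{j-1}$ for every $(j,k)$ outside a uniformly bounded ``neighbouring'' set (in dimension one these are just $(\bar{j}\pm 1,\bar{k})$, as in the norm case), and those neighbours are absorbed into the $O(1)$ contribution.

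Squaring~\eqref{eq:bd_ups1semi} then bounds every remaining term by
\[
\frac{2^{2js}}{|\omega|^{2s}}|\Phi_{j,k}(\omega)|^2\;\lesssim\; \frac{2^{j}}{(1+d(\omega,I_{j,k}))^{2\alpha}}\;\lesssim\; 2^{j(1-2\alpha)},
\]
up to the finite number of directions $|K|$. Summing over $j\in\N$ produces a geometric-type series that converges by the hypothesis $2\alpha>1$, uniformly in $\omega$. I do not foresee any substantive obstacle: the only place the seminorm case could have gone wrong is the neutralisation of the $|\omega|^{-s}$ singularity near the origin, and this has already been handled inside Lemma~\ref{L_uplowsemi} precisely by the $\min(1,|\omega|)^s$ factor of~\eqref{eq_fcndsemi}.
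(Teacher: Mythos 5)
Your argument is correct and is exactly the paper's: the authors prove this corollary by the one-line remark that it follows from Lemma~\ref{L_uplowsemi} by the same scheme as Corollary~\ref{cor:sommanorm}, which is precisely the near/far splitting over the admissible partition that you spell out (one harmless caveat: for $\omega\in I_{\bullet}$ no $(\bar{j},\bar{k})$ with $\bar{j}\in\N$ exists, but then every term is a ``far'' or ``neighbour'' term and your bounds apply verbatim). The singularity of $|\omega|^{-2s}$ at the origin is indeed already neutralised by the $\min(1,|\omega|)^{s}$ factor inside hypothesis~\eqref{eq_fcndsemi}, exactly as you observe.
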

\begin{proof}
  The proof is a consequence of the inequalities proven in Lemma \ref{L_uplowsemi}, the scheme is the same of Corollary
  \ref{cor:sommanorm}.
\end{proof}

We state now the counterpart of Lemma \ref{lem:1Bessel} and Lemma \ref{lem:2Bessel} in the framework of seminorm discretization.

\begin{lem}
  \label{lem:1Besselsemi}
   Let $s\geq 0$ and $\varphi$  such that $\Phip(\omega)$ satisfy hypothesis \eqref{eq_fcndsemi}.
   We define
   \begin{align}
      \nonumber
      \widetilde{\phi_{j,k,\lambda}}(t)&=\frac{1}{2^{j/2}} T_{\frac{\lambda}{\bap}} 
      \F^{-1}_{\omega \mapsto t}\pt{ \frac{2^{js}}{|\omega|^s} \Phip(\omega)}(t), \quad j\in \N\\
      \label{eq:widetildesemi}
      \widetilde{\phi_{-j,k,\lambda}}(t)&=
      \frac{1}{2^{j/2}} T_{{\lambda}{\bap}} \Dil_{2^{2j}}
      \F^{-1}_{\omega \mapsto t}\pt{ \frac{2^{js}}{|\omega|^s} \Phip(\omega)}(t), \quad j\in \N\setminus \ptg{0}
   \end{align}
   then, for all $\nu\in (0,1]$ and  $j,k$ the system of functions 
   \[
     \ptg{ \widetilde{\phi_{j,k,\lambda}}(t)}_{\lambda\in \nu \Z}
   \]
   is a Bessel sequence uniformly in $j,k$, that is
   \begin{align}
     \label{eq:bessel_s}
     \sum_{\lambda \in \nu\Z} \abs{\Lprod{\widetilde{\phi_{j,k,\lambda}}} {f}}^2\leq C_\nu \norm{f}^2_{L^2(\R)}
   \end{align}
   with $C_\nu$ independent on $j,k$.
 \end{lem}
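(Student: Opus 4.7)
The plan is to handle the positive-index case $j\in\N$ essentially as in Lemma~\ref{lem:1Bessel}, and then reduce the negative-index case $-j$, $j\in\N\setminus\{0\}$, to the positive one via the unitarity of the dilation operator. For $j\in\N$, I would invoke \cite{CH03}[Thm 7.2.3 p.143] exactly as in Lemma~\ref{lem:1Bessel}: the Bessel estimate~\eqref{eq:bessel_s} with translation spacing $\nu/\bap$ is equivalent to
\[
\Xi_{j,k}(\gamma) := \sum_{m\in\Z}\left|\F(\widetilde{\phi_{j,k,0}})\!\left((\gamma-m)\frac{\bap}{\nu}\right)\right|^2 \leq C_\nu \frac{\nu}{\bap}, \qquad\text{a.e. }\gamma\in[0,1].
\]
Since \eqref{eq:widetildesemi} gives $\F(\widetilde{\phi_{j,k,0}})(\omega) = \frac{1}{2^{j/2}}\frac{2^{js}}{|\omega|^s}\Phi_{j,k}(\omega)$, and Lemma~\ref{L_uplowsemi} shows that this quantity satisfies the same pointwise bounds as the corresponding object of Lemma~\ref{lem:1Bessel} (namely $\lesssim 2^{j/2}(1+d(\omega,I_{j,k}))^{-\alpha}$ outside $I_{j,k}$ and $\lesssim 1$ inside), the computation carried out there transfers verbatim and produces a bound uniform in $j,k$ thanks to $2\alpha>1$.

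For the negative-index case, a direct check yields the commutation identity $T_{\lambda\bap}\Dil_{2^{2j}}=\Dil_{2^{2j}}T_{\lambda/\bap}$. Inserting it into~\eqref{eq:widetildesemi} gives the clean relation
\[
\widetilde{\phi_{-j,k,\lambda}} = \Dil_{2^{2j}}\,\widetilde{\phi_{j,k,\lambda}}.
\]
Since $\Dil_{2^{2j}}$ is unitary on $L^2(\R)$ with adjoint $\Dil_{2^{-2j}}$, I would then write
\[
\sum_{\lambda\in\nu\Z}|\Lprod{\widetilde{\phi_{-j,k,\lambda}}}{f}|^2 = \sum_{\lambda\in\nu\Z}|\Lprod{\widetilde{\phi_{j,k,\lambda}}}{\Dil_{2^{-2j}}f}|^2 \leq C_\nu\|\Dil_{2^{-2j}}f\|^2_{L^2(\R)} = C_\nu\|f\|^2_{L^2(\R)},
\]
with precisely the same constant $C_\nu$ produced in the positive-index step. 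Combined with the positive-index estimate, this yields~\eqref{eq:bessel_s} uniformly for all $j\in\Z$ and $k\in K$.

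The main obstacle, and the sole substantive difference with respect to Lemma~\ref{lem:1Bessel}, is the control near the origin. The weight $|\omega|^{-s}$ replacing $(1+|\omega|)^{-s}$ is potentially singular, and in the sum $\Xi_{j,k}(\gamma)$ the sample index $m\approx-\gamma$ (possible only when $j$ is small) can land arbitrarily close to $0$. This is precisely the point where the extra $\min(1,|\omega|)^s$ factor in hypothesis~\eqref{eq_fcndsemi} comes in: it cancels the singularity of $|\omega|^{-s}$ and restores the bounds of the norm case. Once this cancellation is proven, which is precisely what Lemma~\ref{L_uplowsemi} encapsulates, the rest of the argument is mechanical and mirrors the $H^s$-norm case.
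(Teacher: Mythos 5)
Your proposal is correct and follows essentially the same route as the paper: for $j\in\N$ the bounds of Lemma~\ref{L_uplowsemi} make the computation of Lemma~\ref{lem:1Bessel} carry over verbatim, and for $-j$ the paper likewise uses the identity $\widetilde{\phi_{-j,k,\lambda}}=\Dil_{2^{2j}}T_{\lambda/\bap}\widetilde{\phi_{j,k,0}}$ together with the unitarity of $\Dil_{2^{2j}}$ to transfer the dilation onto $f$. Your closing remark on how the $\min(1,|\omega|)^s$ factor neutralizes the singular weight $|\omega|^{-s}$ near the origin correctly identifies the role of Lemma~\ref{L_uplowsemi}, which is exactly where the paper hides that issue.
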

%%%%%%%%%%%%%%%%%%%%%%%%%%%%%%%%%%%%%%%%%%%%%%%%%%%%%%%%%%%%%%%%%%%%%%%%%%%%%%%%%%%%%%%%%%%%%%%%%%%%%%%%%%%%%%%%%%%%%%%%%%%%%%%%  
%%%%%%%%%%%%%%%%%%%%%%%%%%DIMOSTRAZIONE LEMMA 1 LIMITATEZZA OPERATORE DI COEFFICIENTI%%%%%%%%%%%%%%%%%%%%%%%%%%%%%%%%%%%%%%%%%%%          
%%%%%%%%%%%%%%%%%%%%%%%%%%%%%%%%%%%%%%%%%%%%%%%%%%%%%%%%%%%%%%%%%%%%%%%%%%%%%%%%%%%%%%%%%%%%%%%%%%%%%%%%%%%%%%%%%%%%%%%%%%%%%%%%  

 \begin{lem}
    \label{lem:2Besselsemi}
   Let $\Phip$, $\widetilde{\phi_{j,k,\lambda}}$ as in Lemma \ref{lem:1Besselsemi} 
   and $E_{j,k}$ as in \eqref{eq:epnu} for $j\in \N$ and as
   \[
	E_{-j, k}=\ptg{x\in \R \mid 2^{2j} x\in E_{j,k} }
   \]
   for negative integers.
   If $\nu\in (0,1]$ and 
   \begin{align*}
   \supp \widehat{f} \cap E_{j,k} =\emptyset,
   \end{align*} 
   then
   \begin{align}
     \label{eq:bessel2_s}
     \sum_{\lambda \in \nu \Z} \abs{\Lprod{\widetilde{\phi_{j,k,\lambda}}} {f}}^2\leq \frac{C_\nu}{2^{|j|(2\alpha -1)}} \norm{f}^2_{L^2(\R)}
   \end{align}
   with $C_\nu$ independent on $j,k$.
 \end{lem}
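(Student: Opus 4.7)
I would split the statement into two regimes according to the sign of $j$, treating $j\in\N$ by a direct adaptation of Lemma~\ref{lem:2Bessel}, and $j\in -\N\setminus\ptg{0}$ by reducing to the first case via the $L^2$-unitary dilation appearing in~\eqref{eq:widetildesemi}.

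For $j\in\N$, I would mimic the proof of Lemma~\ref{lem:2Bessel} step by step. By \cite{CH03}[Thm 7.2.3 p.143], the inequality~\eqref{eq:bessel2_s} is equivalent to a pointwise bound on the periodised Fourier transform of $\widetilde{\phi_{j,k,0}}$, which after inserting the explicit form of $\widetilde{\phi_{j,k,\lambda}}$ reduces to estimating
\[
\frac{\nu}{\bap}\sum_{m\in\Z}\abs{\chi_{\R\setminus E_{j,k}}\pt{m_{\gamma,j,\nu}}\tfrac{2^{js}}{\abs{m_{\gamma,j,\nu}}^s}\Phip\pt{m_{\gamma,j,\nu}}}^2
\]
uniformly in $\gamma\in[0,1]$, with $m_{\gamma,j,\nu}=(\gamma-m)\bap/\nu$. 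The indices $m$ in the finite set $M_{j,\nu}$ contribute nothing because of the cut-off; for $m\notin M_{j,\nu}$, the same distance lower bound $\dist{m_{\gamma,j,\nu}}{I_{j,k}}\gtrsim \bap(|m|-2\nu)/\nu$ as in Lemma~\ref{lem:2Bessel}, combined with~\eqref{eq:bd_ups1semi}, yields exactly the same term-by-term estimate. The denominator $|\omega|^s$, rather than $(1+|\omega|)^s$, is harmless because Lemma~\ref{L_uplowsemi} absorbs it against the $\min(1,|\omega|)^s$ factor built into the admissibility condition~\eqref{eq:con2semi}. Summing the tail in $m$ gives the advertised decay $2^{-j(2\alpha-1)}$.

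For $j<0$, write $j=-\ell$ with $\ell\in\N\setminus\ptg{0}$. Using $\Dil_a^{*}=\Dil_{1/a}$ together with the commutation $\Dil_{1/a}T_b=T_{b/a}\Dil_{1/a}$, a direct computation starting from~\eqref{eq:widetildesemi} produces the key identity
\[
\Lprod{\widetilde{\phi_{-\ell,k,\lambda}}}{f}=\Lprod{\widetilde{\phi_{\ell,k,\lambda}}}{\Dil_{2^{-2\ell}}f}.
\]
Since $\Dil_{2^{-2\ell}}$ is an $L^2$-isometry, $\norm{\Dil_{2^{-2\ell}}f}_{L^2}=\norm{f}_{L^2}$; moreover $\F(\Dil_{2^{-2\ell}}f)(\omega)=2^{-\ell}\widehat{f}(2^{-2\ell}\omega)$, so $\supp\F(\Dil_{2^{-2\ell}}f)=2^{2\ell}\,\supp\widehat{f}$. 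By the very definition $E_{-\ell,k}=\ptg{x\mid 2^{2\ell}x\in E_{\ell,k}}$, the assumption $\supp\widehat{f}\cap E_{-\ell,k}=\emptyset$ is equivalent to $\supp\F(\Dil_{2^{-2\ell}}f)\cap E_{\ell,k}=\emptyset$. Hence the positive case, applied to $\Dil_{2^{-2\ell}}f$, returns exactly the bound $C_\nu\,2^{-\ell(2\alpha-1)}=C_\nu\,2^{-|j|(2\alpha-1)}$.

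The main obstacle I foresee is the bookkeeping in the reduction step: one has to track how $T_{\lambda\bap}$ and $\Dil_{2^{2\ell}}$ in~\eqref{eq:widetildesemi} combine under the adjoint, so that the right-hand side of the key identity is precisely $\widetilde{\phi_{\ell,k,\lambda}}$, with no leftover dilation, translation or normalization prefactor. Once this identity is correctly established, both the $L^2$-norm and the support hypothesis transform transparently under $\Dil_{2^{-2\ell}}$, and the whole negative-$j$ argument collapses onto the positive-$j$ one, which is itself a verbatim adaptation of Lemma~\ref{lem:2Bessel}.
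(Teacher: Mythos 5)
Your proposal is correct and follows essentially the same route as the paper: for $j\in\N$ the paper likewise repeats the argument of Lemma~\ref{lem:2Bessel} with \eqref{eq:bd_ups1semi}--\eqref{eq:bd_ups2semi} in place of \eqref{eq:bd_ups1}--\eqref{eq:bd_ups2}, and for negative indices it uses exactly your reduction $\Lprod{\widetilde{\phi_{-j,k,\lambda}}}{f}=\Lprod{T_{\lambda/\bap}\widetilde{\phi_{j,k,0}}}{\Dil_{2^{-2j}}f}$ together with the unitarity of the dilation and the observation that $\supp\widehat{f}\cap E_{-j,k}=\emptyset$ implies $\supp\widehat{\Dil_{2^{-2j}}f}\cap E_{j,k}=\emptyset$. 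The bookkeeping you worried about works out because $T_{\lambda\bap}\Dil_{2^{2j}}=\Dil_{2^{2j}}T_{\lambda/\bap}$, as the paper notes in \eqref{eq:framesemi}.
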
 

\begin{proof}[Lemma \ref{lem:1Besselsemi}, \ref{lem:2Besselsemi}]
  If $j\in \N$ the proof of both Lemmata is the same of of Lemma \ref{lem:1Bessel} and \ref{lem:2Bessel}.
  In order to prove Lemma \eqref{lem:1Besselsemi} for $-j$ with $j\in \N\setminus \ptg{0}$, we use the following relation
  \begin{align*}
    & \sum_{\lambda \in \nu \Z} \abs{ \Lprod{\widetilde{\phi_{-j,k,\lambda}}} {f}}^2=
    \sum_{\lambda \in \nu \Z} \abs{\Lprod{ \Dil_{2^{2j}} T_{\frac{\lambda}{\beta(j)}}
   \widetilde{\phi_{j,k,0 }}} {f}}^2
   %=\\
   % &
%     \sum_{\lambda \in \nu \Z} 
    %  \abs{ \Lprod{T_{ \frac{\lambda}{ \beta(j)}}  
     %\widetilde{\phi_{j,k,0 }} } { \Dil_{2^{-2j}} f}}^2 \leq
    =\sum_{\lambda \in \nu \Z} \abs{ \Lprod{ T_{ \frac{\lambda} { \beta(j)} } 
     \widetilde{\phi_{j,k,0 }}} {\Dil_{2^{-2j}}{f}} }^2
  \end{align*}
  and the fact that $\norm{\Dil_{2^{-2j} }f  }_{L^2(\R)}= \norm{f}_{L^2(\R)}$.
  For Lemma \eqref{lem:2Besselsemi} notice also that 
  \(
       \supp \widehat{f} \cap E_{-j,k}=\emptyset      
  \)
  implies
  \(
    \supp \widehat{\Dil_{2^{-2j}} f}\cap E_{j,k}=\emptyset.      
  \)

\end{proof}
We can now prove the boundedness of the frame operator in the seminorm setting. 

\begin{thm}
    \label{thm:boundcoeffsemi}
    Let $s\geq 0$ and $\varphi$ so that condition \eqref{eq_fcndsemi} holds for $\alpha>\frac{1}{2}$, 
%       Moreover,
%       \[
%       \widehat\varphi(\xi) \lesssim (1+|\xi|)^{-\alpha}. 
%       \]
%        \item[iii)] 
%          \begin{align}
%          \label{eq:con3}  
%            |\Phip(\omega)|\lesssim 1, \quad \xi\in I_p.
%          \end{align}
%          
   then, 
   \begin{equation}
    \label{eq_coeff_opssemi}
    \sum_{j,k,\lambda \in \Delta}2^{2js}\abs{\Lprod{f}{\phipkn}}^2 \leq C_\nu  \abs{f}_s^2.
   \end{equation}   
   \end{thm}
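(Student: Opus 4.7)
The plan is to mimic the proof of Theorem~\ref{thm:Bound}, substituting the inhomogeneous weight $\pt{1+|\omega|}^s$ by the homogeneous one $|\omega|^s$, and summing now over all $j\in\Z$ rather than $j\in\N$. The key identity, valid for every $j\in\Z$, $k\in K$ and $\lambda\in\nu\Z$, is
$$2^{2js}\abs{\Lprod{f}{\phipkn}}^2 = \abs{\Lprod{|\omega|^s\widehat{f}(\omega)}{\F(\widetilde{\phipkn})}}^2,$$
with $\widetilde{\phipkn}$ defined in \eqref{eq:widetildesemi}. For $j\geq 0$ this is literally the calculation \eqref{eq:thm3_last}; for $j<0$ it follows from the dilation rule $\F(\Dil_{2^{2j}}g)(\omega)=2^j\hat{g}(2^{2j}\omega)$, the relation $\Phi_{-j,k}(\omega)=\Phi_{j,k}(2^{2j}\omega)$, and the homogeneity $|2^{2j}\omega|^s=2^{2js}|\omega|^s$. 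This homogeneity is precisely the algebraic reason why only the Sobolev seminorm (and not the full $H^s$-norm) is compatible with the extra dilations $\Dil_{2^{2j}}$ used at negative scales.

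For each $(j,k)\in\Z\times K$, I would decompose $f=f_{j,k,1}+f_{j,k,2}$, with $\widehat{f_{j,k,1}}=\widehat{f}\,\chi_{E_{j,k}}$, where $E_{j,k}$ is as in \eqref{eq:epnu} for $j\geq 0$ and $E_{-j,k}=\ptg{x\in\R : 2^{2j}x\in E_{j,k}}$ for $j\in\N\setminus\ptg{0}$. Applying Lemma~\ref{lem:1Besselsemi} to the $f_{j,k,1}$-piece and Lemma~\ref{lem:2Besselsemi} to the $f_{j,k,2}$-piece produces, for every $(j,k)$,
$$\sum_{\lambda\in\nu\Z}2^{2js}\abs{\Lprod{f}{\phipkn}}^2 \lesssim C_{\nu}\abs{f_{j,k,1}}_s^2 + \frac{C_{\nu}}{2^{|j|(2\alpha-1)}}\abs{f}_s^2.$$

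Finally, one sums over $(j,k)\in\Z\times K$. The enlarged family $\ptg{E_{j,k}}_{j\in\Z, k\in K}$ inherits a uniform finite-intersection property: on $\R\setminus I_\bullet$ this comes from admissibility of the partition, while on $I_\bullet\setminus\ptg{0}$ it follows from the fact that the negative-scale sets are merely dilates by $2^{-2j}$ of a family already enjoying this property. Therefore $\sum_{j,k}\abs{f_{j,k,1}}_s^2\lesssim\abs{f}_s^2$, while the remainder summand is controlled by $\sum_{j\in\Z}2^{-|j|(2\alpha-1)}<+\infty$, which converges because $\alpha>1/2$. Combining these two estimates yields \eqref{eq_coeff_opssemi}.

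The only genuinely new difficulty with respect to Theorem~\ref{thm:Bound} lies in the negative-scale branch: one must check that $\Dil_{2^{2j}}$ intertwines the frame coefficients, the Bessel bounds \eqref{eq:bessel_s}--\eqref{eq:bessel2_s} and the frequency decomposition in a way coherent with the weight $|\omega|^s$, and that the dilated sets $E_{-j,k}$, despite accumulating towards the origin, retain the uniform finite-overlap property needed in the summation step. Once these geometric facts are granted, the algebraic manipulation is identical to the positive-scale case and the proof concludes as in Theorem~\ref{thm:Bound}.
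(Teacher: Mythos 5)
Your proposal is correct and follows essentially the same route as the paper: Plancherel plus the homogeneity of $|\omega|^s$ to absorb the weight $2^{2js}$ into the modified windows $\widetilde{\phi_{j,k,\lambda}}$ of \eqref{eq:widetildesemi}, then the decomposition $f=f_{j,k,1}+f_{j,k,2}$ along the sets $E_{j,k}$ and their dilates, with Lemmata~\ref{lem:1Besselsemi} and \ref{lem:2Besselsemi} playing the roles of Lemmata~\ref{lem:1Bessel} and \ref{lem:2Bessel} in Theorem~\ref{thm:Bound}. The extra checks you flag for the negative scales (the intertwining by $\Dil_{2^{2j}}$ and the finite-overlap property of the sets $E_{-j,k}$) are exactly the points the paper delegates to the construction of the Lemmata and to the remark on the dilated covering, so nothing is missing.
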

%%%%%%%%%%%%%%%%%%%%%%%%%%%%%%%%%%%%%%%%%%%%%%%%%%%%%%%%%%%%%%%%%%%%%%%%%%%%%%%%%%%%%%%%%%%%%%%%%%%%%%%%%%%%%%%%%%%%%%%%%%%%%%%%  
%%%%%%%%%%%%%%%%%%%%%%%%%%DIMOSTRAZIONE LIMITATEZZA OPERATORE DI COEFFICIENTI%%%%%%%%%%%%%%%%%%%%%%%%%%%%%%%%%%%%%%%%%%%%%%%%%%% 
%%%%%%%%%%%%%%%%%%%%%%%%%%%%%%%%%%%%%%%%%%%%%%%%%%%%%%%%%%%%%%%%%%%%%%%%%%%%%%%%%%%%%%%%%%%%%%%%%%%%%%%%%%%%%%%%%%%%%%%%%%%%%%%%  
\begin{proof}
% For indexes $j\in \N$ we use the same idea of Theorem \ref{thm:Bound}.
% For negative indexes indexes notice that
% \begin{align*}
%   &2^{2(-j)s}\abs{\Lprod{f}{\varphi_{-j,k,\lambda}}}^2=
%2^{2(-j)s}   \abs{\Lprod{f}{\Dil_{2^{2j}} T_{\frac{\lambda}{ \bap}} \varphi_{j,k,0}}}^2 =\\
%   &2^{2(-j)s}\abs{\Lprod{\Dil_{2^{-2j}}f}{ T_{\frac{\lambda}{ \bap}} \varphi_{j,k,0}}}^2.
% \end{align*}
% Then, using the result for $j\in \N$, we get
% \[
% \sum_{j>0,k,\lambda \in \Delta}2^{2(-j)s}\abs{\Lprod{f}{\varphi_{-j,k,0}}}^2 = \sum_{j>0,k,\lambda \in \Delta}2^{2(-j)s}\abs{\Lprod{\Dil_{2^{-2j}}f}{ T_{\frac{\lambda}{ \bap}} \varphi_{j,k,0}}}^2
% \]
%and
%\[
%  \abs{\Dil_{2^{-2j}}f}_{s}^2= 2^{2js}\abs{f}_{s}^2.
%\]
We notice that using Plancherel Theorem one gets
\begin{align*}
     &\sum_{j,k,\lambda\in\Delta}4^{js}\abs{\Lprod{f}{\phipkn}}^2 =
     \sum_{j,k,\lambda\in\Delta}4^{js}\abs{\Lprod{\F (f)}{\F(\phipkn)}}^2\\
     &=\sum_{j,k,\lambda\in\Delta}\abs{\Lprod{\F (f) |\omega|^s}{ \frac{2^{js}}{|\omega|^s}\F(\phipkn)}}^2\\
     &=\sum_{j\geq0,k,\lambda\in\Delta}\abs{\Lprod{\F (f) |\omega|^s}{\frac{1}{2^{j/2}} e^{-\pii \frac{\lambda}{\bap}(\cdot)}\Phip(\cdot) \frac{2^{js}}{|\omega|^s} } }^2 \\
     &\quad+ \sum_{j>0,k,\lambda\in\Delta}\abs{\Lprod{\F (f) |\omega|^s}{\frac{1}{2^{j/2}} e^{-\pii \lambda\bap(\cdot)}D_{2^{2j}}\pt{\Phip(\cdot) \frac{2^{js}}{|\omega|^s}}}}^2 \\
         &=\sum_{j\geq0,k,\lambda\in \Delta}\abs{\Lprod{\F (f) |\omega|^s}{  \F( \widetilde{\phi_{j,k,\lambda}) }}}^2 + \sum_{j>0,k,\lambda\in \Delta}\abs{\Lprod{\F (f) |\omega|^s}{  \F( \widetilde{\phi_{-j,k,\lambda}) }}}^2,\\      
\end{align*}
which is the counterpart of ~\eqref{eq:thm3_last} in Theorem~\ref{thm:Bound}. 
Hence we can conclude as shown there using Lemmata~\ref{lem:1Besselsemi} and \ref{lem:2Besselsemi} instead of \ref{lem:1Bessel},\ref{lem:2Bessel}.   
\end{proof} 

\begin{proof}[Proof of Theorem \ref{thm:mainsemi}]
	The upper bound follows from Theorem \ref{thm:boundcoeffsemi}, while the lower bound 
	is proven as in Theorem \ref{thm_down_op}.
\end{proof}

  \section{Explicit examples of $H^s$-Frames} 
  \label{sec:ex}
 
 \label{s:expls}
 In this section we provide some explicit examples of frames which discretize the $H^s$-norm and of frames
 which discretize the $H^s$-seminorm.

 \subsection{Example 1}
   \label{subsect:orto}
 Let us consider $\varpb(t)= \sinc(t)=\pt{\F^{-1}\chi_{\pt{-\frac{1}{2},\frac{1}{2}}} )}(t)$ and
 $\varphi(t)=\varpb(t) $ and the partition introduced in \eqref{eq:setspl}; see Figure~\ref{Fig:Sinc}.
 Since the characteristic function
 has compact support, the couple $\varpb, \varphi$ is trivially $s$-admissible for all $s\geq 0$
 both for the Sobolev norm and for the Sobolve seminorm.
 \begin{figure}
     \centering
     \begin{subfigure}[b]{0.4\textwidth}
 \includegraphics[width=.8\textwidth]{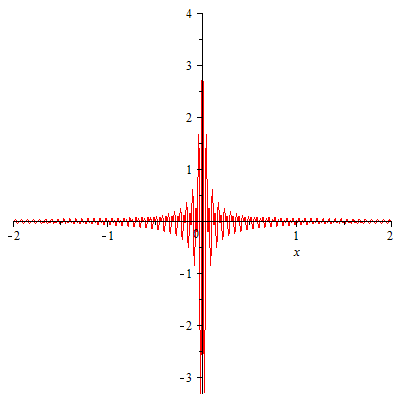}
         \caption{Time domain}
     \end{subfigure}
     \begin{subfigure}[b]{0.4\textwidth}
         \includegraphics[width=.8\textwidth]{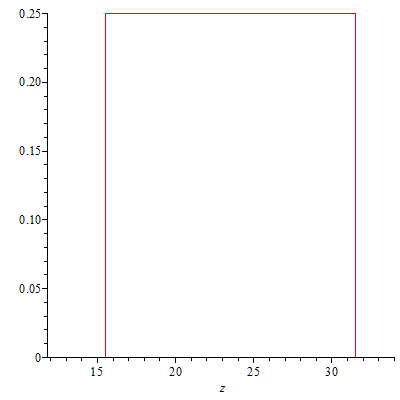}
         \caption{Frequency domain}
     \end{subfigure}
 \caption{Frame element $\phipkn(t)$ in both time and frequency. $\phi = \sinc(t)$ and $j=4$.}\label{Fig:Sinc}
 \end{figure}

 Therefore by Section \ref{sec:normHs}, for $\nu$ small enough, the system of function
 \begin{align}
 \nonumber
      \varphi_{j,+} (t)=&T_{\frac{\lambda}{2^j}}  \pt{   \frac{\lambda}{ \sqrt{\bap} } \sum_{\eta =2^{j}}^{2^{j+1}-1} 
   e^{\pii \eta t } \varphi(t)}, & \lambda \in \nu \Z,
   \\
   \nonumber
     \varphi_{j,-, \lambda} (t)=& T_{\frac{\lambda}{2^j}}  \pt{  \frac{1}{ \sqrt{\bap} } \sum_{\eta =2^{j}}^{2^{j+1}-1} 
   e^{-\pii \eta t } \varphi(t)}= \overline{\varphi_{j,+} (t)}, & \lambda \in \nu \Z, \quad j\in \N 
 \\  
  \label{eq:excar}
   \varphi_{\bullet,\lambda} (t) =& \varphi(t - \lambda) , \lambda \in \nu \Z\,
 \end{align}
 is a frame.

  This example is indeed very peculiar. 
 The system of function \eqref{eq:excar} is not only a frame for $\nu$ small enough but is an orthonormal base of
 $L^2(\R)$ if we set $\nu=1$. Indeed,  notice that
 \begin{align*}
   \Phi_{j,\pm}(\omega)&= \sum_{\eta=\pm 2^j}^{\pm(2^{j}-1)} \widehat{\varphi}(\omega-\eta)=
   \sum_{\eta= \pm 2^j}^{\pm(2^{j}-1)} \chi_{\pt{-\frac{1}{2}, \frac{1}{2}}}(\omega-\eta)= \chi_{I_{j,\pm}}(\omega),\\
 %   \Phi_{j,-}(\omega)&= \sum_{\eta=2^j}^{2^{j}-1} \widehat{\varphi}(\omega+\eta)=
 %   \sum_{\eta=2^j}^{2^{j}-1} \chi_{\pt{-\frac{1}{2}, \frac{1}{2}} }(\omega+\eta)= \chi_{I_{j,-}} (\omega),\\
   \Phi_{\bullet}(\omega)&=\chi_{(-\frac{1}{2},\frac{1}{2})}(\omega).
 \end{align*}
 Therefore, if $j\neq j'$  or $k\neq k'$
 \begin{align*}
   \Lprod{\varphi_{j,k,\lambda}}{ \varphi_{j',k',\lambda'}} &= 
   \Lprod{ \widehat{\varphi_{j,k,\lambda}}}{\widehat{ \varphi_{j',k',\lambda'}}}\\
   &=\Lprod{\frac{1}{2^{j/2}} e^{-\pii \cdot \lambda/2^{j}} \chi_{I_{j,k}}(\cdot) }{\frac{1}{2^{j'/2}} e^{-\pii \cdot \lambda/2^{j'}} \chi_{I_{j',k'}} (\cdot)}=0  
 \end{align*}
 for  all $\lambda, \lambda'\in \Z $.
 Moreover if $j=j'$ and $k=k'$, by well known properties of Fourier series,
 \begin{align*}
   \Lprod{\varphi_{j,k,\lambda}}{ \varphi_{j,k,\lambda'}} &= \\
    &=\Lprod{\frac{1}{2^{j/2}} e^{-\pii \cdot \lambda/2^{j}} \chi_{I_{j,k}}(\cdot) }{\frac{1}{2^{j/2}} e^{-\pii \cdot \lambda/2^{j}} \chi_{I_{j,k}} (\cdot)}=\delta_0(\lambda-\lambda') . 
 \end{align*}
 In the same way one shows that $\varphi_{\bullet,\lambda}$ is orthogonal to
 $\varphi_{j,k,\lambda}$ for all $j,k,\lambda$ and that
 \[
   \Lprod{\varphi_{\bullet,\lambda}}{\varphi_{\bullet,\lambda'}}=\delta_0(\lambda-\lambda').
 \]

 \begin{rem}
 The orthonormal basis system we introduced is strictly related to Shannon basis.
 As expected the localization properties in the time domain of this frame are not so 
 strong, due to the mild localization properties of the $\sinc$ function.
 Nevertheless, in our setting, we can gain localization considering 
 powers of $\sinc$. 
 That is we can consider $\phi(t)=\sinc(t)^n$ and this new window has increasing 
 localization as $n$-increases, moreover it is always $s$-admissible since its Fourier transform has compact support.
 In Figure \ref{Fig:FrWin} there are two examples with $n=4$ and $n=15$. 
 It is clear that as $n$ increases the function gains localization in time and looses a little of localization in the 
 frequency domain. This loss is nevertheless not so high, since we are not dilating the function but summing.
 \end{rem}

 \subsection{Example 2}
 The definition of $s$-admissible window function involves properties of
 $\Phi_{j,k}$ rather then on the window function $\varphi$.
 In general, it is very difficult to check the properties of $\Phi_{j,k}$ since the definition
 involves sums.
 Nevertheless, it is possible to provide sufficient conditions on $\varphi$ which guarantee the 
 $s$-admissibility.
 In the following we will always consider $\varphi=\varpb$.
 \begin{Prop}
   \label{prop:decay}
   Let $\varphi$ such that
   \begin{align}
     \label{eq:condphiex}
     \abs{\widehat{\varphi}(\omega)}\lesssim \frac{1}{\pt{1+|\omega|}^{s+1+\epsilon}}, \quad \epsilon>0.
   \end{align}
   then  $\Phi_{j,k}$ satisfy condition \eqref{eq:con2}.
  
 \end{Prop}
 \begin{proof}
   First let us check that $\Phi_{j,k}$
   is uniformly bounded. It is clear that all $\Phi_{j,k}$
   belong to $L^{\infty}(\R)$ by condition \eqref{eq:condphiex},
   therefore the issue it to determine a uniform bound.
   Condition \eqref{eq:condphiex} implies that $\widehat{\varphi}$ belongs to $L^1(\R)$.
   Therefore, for all $j$  and $k$ we have
   \begin{align*}
     \abs{\Phi_{j,k}}&\leq 
     \sum_{\eta\in Z_{j,k} } \abs{\varphi(\omega-\eta)}
     \\&\lesssim \sum_{\eta\in Z_{j,k}} \frac{1}{\pt{1+|\omega-\eta|}^{s+1+\epsilon}}\\
     \\ &\lesssim \int \frac{1}{\pt{1+|\omega|}^{s+1+\epsilon}} d\omega\lesssim 1.
   \end{align*}
   In order to prove the decay property, notice again that the issue is to have a uniform bound with respect to $j$.
   Moreover, since   $\Phi_{j,k}$ is uniformly bounded we can prove the inequality just when 
   $d(\omega, I_{j,k})> 2^{j/2}$. Under this hypothesis, by \eqref{eq:condphi} we can write
   \begin{align*}
     \abs{\Phi_{j,k}}\leq& \sum_{\eta\in Z_{j,k} }\abs{\varphi(\omega-\eta)}
                          \lesssim \sum_{\eta\in Z_{j,k} } \frac{1}{\pt{1+|\omega-\eta|}^{1+s+\epsilon}}\\
                          & \lesssim  2^{j} \frac{1}{   \pt{1+ d(\omega, I_{j,k})}^{1+s+\epsilon} }
                           \lesssim  2^{j/2} \frac{1}{\pt{1+d(\omega, I_{j,k})}^{s+\frac{1}{2}+\epsilon} }.
   \end{align*}
 \end{proof}
 %%%%%%%%%%%%%%%%%%%%%%%%%%%%%%%%%%%%%%%%%%%%%%%%%%%%%%%%%%%%%%%%%%%%%%%%%%%%%%%
 %%%%%%%%%%%%%LIMITATEZZA DA SOTTO%%%%%%%%%%%%%%%%%%%%%%%%%%%%%%%%%%%%%%%%%%%%%%
 %%%%%%%%%%%%%%%%%%%%%%%%%%%%%%%%%%%%%%%%%%%%%%%%%%%%%%%%%%%%%%%%%%%%%%%%%%%%%%%
 \begin{Prop}
   \label{prop:posit}
   Let $\varphi$ such that $\widehat{\phi}$ has definite sign and 
   \begin{align}
     \label{eq:posphi}
     |\widehat{\varphi}(\omega)|\geq a, \quad \omega \in I_{\bullet}
   \end{align}
   then
   $\Phi_{\bullet}$ and $\Phi_{j,k}$ satisfy condition \eqref{eq:cond2bullet} and \eqref{eq:cond2j} respectively.
 \end{Prop}
 \begin{proof}
   There is nothing to prove for $\Phi_{\bullet}$.
   For $\Phi_{j,k}$ notice that, by hypothesis
   \begin{align*}
     |\Phi_{j,k}(\omega)|=\sum_{\eta\in Z_{j,k} }\abs{\varphi(\omega-\eta)}
   \end{align*}
  and that for all $\omega\in I_{j,k}$ there exists $\bar{\eta} \in Z_{j,k}$ 
  such that $\omega-\bar{\eta}\in I_{\bullet}$, therefore
  \[
    |\Phi_{j,k}(\omega)|=\sum_{\eta\in Z_{j,k} }\abs{\varphi(\omega-\eta)}\geq 
    \abs{\varphi(\omega-\bar{\eta})}\geq a.
  \]

 \end{proof}
 It is now immediate to state the following Theorem.
 \begin{figure}
     \centering
     \begin{subfigure}[b]{0.42\textwidth}
 \includegraphics[width=\textwidth]{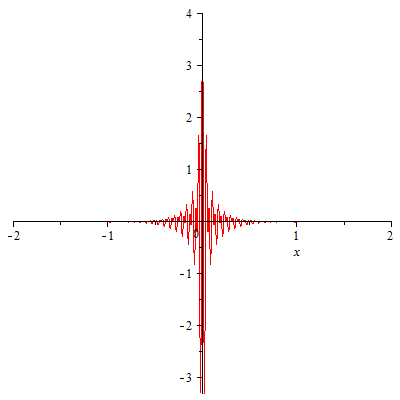}
         \caption{Time domain, $j=4$}
     \end{subfigure}
     \begin{subfigure}[b]{0.42\textwidth}
         \includegraphics[width=\textwidth]{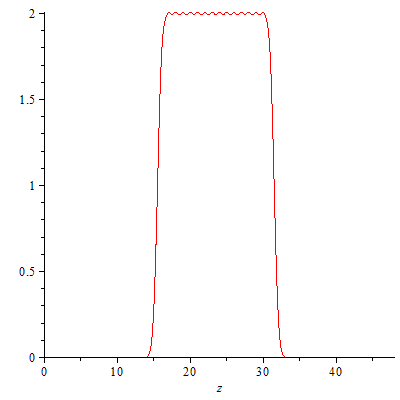}
         \caption{Frequency domain, $j=4$}
     \end{subfigure} 
 \caption{Time and frequency outlook of the real part of two frame elements with normalized Gaussian window.}
 \label{Fig:Gaus1}		
 \end{figure}
 \begin{thm}
   \label{thm:condsuff}
   Let $\varphi$ be a function such that
   \[
     \label{eq:condphi}
     \abs{\widehat{\varphi}(\omega)}\lesssim \frac{1}{\pt{1+\omega}^{s+1+\epsilon}}, \quad \epsilon>0.
   \]
   for a certain $s\in \R$ and such that $\widehat{\varphi}$ has definite sign and
   \[
         |\widehat{\varphi}(\omega)|\geq a, \quad \omega\in I_{\bullet}
   \]
   then the system of functions 
   \[
     \ptg{T_{\lambda} \varphi, \lambda \in \nu \Z} \cup\ptg{\varphi_{j,k,\lambda}, j,k,\lambda \in \Gamma}
   \]
   is a frame describing the $H^s$-norm for $\nu$ small enough.
 \end{thm}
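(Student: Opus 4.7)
The plan is to obtain Theorem \ref{thm:condsuff} as a direct corollary of the frame characterization theorem \ref{thm:fram}, with Propositions \ref{prop:decay} and \ref{prop:posit} serving as the bridge that translates the pointwise hypotheses on $\widehat{\varphi}$ into the $s$-admissibility hypotheses of Definition \ref{def-sadm}. Setting $\varpb = \varphi$, the whole task reduces to verifying conditions (i) and (ii) of Definition \ref{def-sadm}, and then invoking Theorem \ref{thm:fram} directly.

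For condition (i), I would first observe that $\Phi_\bullet = \widehat{\varphi}$, so the decay hypothesis $|\widehat{\varphi}(\omega)| \lesssim (1+|\omega|)^{-(s+1+\epsilon)}$ immediately gives $|\Phi_\bullet(\omega)| \lesssim (1+|\omega|)^{-\alpha}$ with, for example, $\alpha = \tfrac{1}{2} + \epsilon > \tfrac{d}{2}$ in dimension one. For the bound on $\Phi_{j,k}$, the decay hypothesis is precisely the input to Proposition \ref{prop:decay}, whose conclusion is the bound \eqref{eq:con2} on $\Phi_{j,k}$ (with the same $\alpha$). Together these establish \eqref{eq:con2} in full.

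For condition (ii), I would invoke Proposition \ref{prop:posit}: the assumption that $\widehat{\varphi}$ has definite sign and satisfies $|\widehat{\varphi}(\omega)| \geq a$ on $I_\bullet$ is exactly the hypothesis needed to obtain \eqref{eq:cond2bullet} (trivially, since $\Phi_\bullet = \widehat\varphi$) and \eqref{eq:cond2j} with a uniform constant, using the fact that for each $\omega \in I_{j,k}$ there is some $\bar\eta \in Z_{j,k}$ with $\omega - \bar\eta \in I_\bullet$.

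With both conditions of Definition \ref{def-sadm} verified, Theorem \ref{thm:fram} applies and yields the existence of $\nu_0 > 0$ such that the system is a frame characterizing the $H^s$-norm for every $\nu \in (0, \nu_0)$. There is no real obstacle here: all of the analytic difficulty has been absorbed into Theorem \ref{thm:fram} and the two preparatory propositions, and the argument is little more than a bookkeeping verification that the hypotheses match.
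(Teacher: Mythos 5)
Your proposal is correct and is exactly the paper's argument: the paper states Theorem \ref{thm:condsuff} immediately after Propositions \ref{prop:decay} and \ref{prop:posit} precisely because those two propositions verify conditions (i) and (ii) of Definition \ref{def-sadm} (with $\varpb=\varphi$ and $\alpha=\tfrac{1}{2}+\epsilon$), after which Theorem \ref{thm:fram} gives the conclusion. Nothing is missing.
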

 The easiest example of a function satisfying the conditions of Theorem \ref{thm:condsuff}
 is the Gaussian, see Figure ~\ref{Fig:Gaus1}. 
 % % % % % % % % % % % % % % % % % % % % % % % % % % % % % % % % % % % % % % 
 % % % % % % sinc in frequenza % % % % % % % % % % % % % % % % % % % % % % % 
 % % % % % % % % % % % % % % % % % % % % % % % % % % % % % % % % % % % % % % 
 \subsection{Example 3}
 In Theorem \ref{thm:condsuff} we provided necessary condition to obtain 
 admissible windows. In this subsection we describe in detail an example which shows that 
 the condition of Theorem \ref{thm:condsuff} are not necessary. 
 We will provide also another example of admissible function which describes the
 $H^s$-seminorm.
 %{\color{red} Forse omettiamo
 %We will provide somehow the dual of the orthonormal set introduces in Subsection \ref{subsect:orto}.
 %}
 Let us consider 
 \begin{align}
   \label{eq:sinc}
   \varphi=\chi_{\pt{-\frac{1}{2},\frac{1}{2}}}(t)=\pt{\F^{-1} 
   \sinc(\cdot)}(t)=\pt{\F^{-1} \frac{\sin(\pi \cdot )}{\pi \cdot}}(t).
 \end{align}
 It is clear that $\varphi$ defined in \eqref{eq:sinc} does not satisfy the sufficient conditions 
 \eqref{eq:condphiex} and \eqref{eq:posphi}. Nevertheless, it will provide a frame. See Figure~\ref{Fig:Caratteristica} for the plots of the frame with this particular window function.
 \begin{figure}
     \centering
     \begin{subfigure}[b]{0.4\textwidth}
 \includegraphics[width=\textwidth]{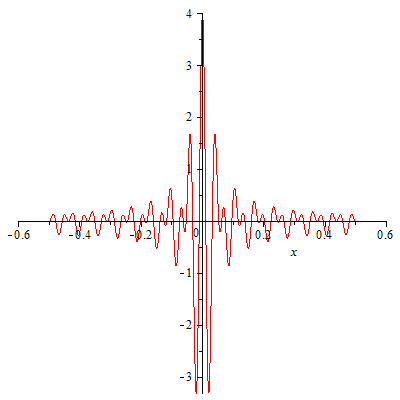}
         \caption{Time domain}
     \end{subfigure}
     \begin{subfigure}[b]{0.4\textwidth}
         \includegraphics[width=\textwidth]{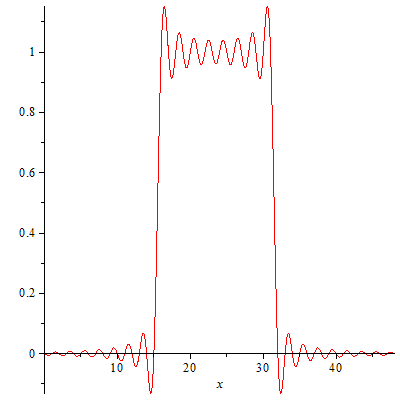}
         \caption{Frequency domain}
     \end{subfigure}
 \caption{Frame element $\phipkn(t)$ in both time and frequency. Here, $\phi = \chi_{\ptq{-\frac{1}{2},\frac{1}{2}}}$ and $j=4$.}\label{Fig:Caratteristica}
 \end{figure}

 \begin{Prop}
   \label{prop:sinc}
   If $\varphi$ is as in \eqref{eq:sinc}, then $\varphi$ is $s$-admissible for the Sobolev seminorm
   for each $s\in \left[0, 1\right)$.
 \end{Prop}
 \begin{proof}
 First we notice that by Remark~\ref{rem_decay_finite} it is enough to prove the decay property at infinity
 for $j>1$. %Since the polynomial behavior in $\omega=0$ is trivial, we do all our calculations for $j>1$.
 \medskip\\
   Let us suppose $k=+$, the case $k=-$ is equivalent.
   By definition
   \begin{align*}
     \Phi_{j,+}(\omega)&=\sum_{\eta=2^{j}}^{2^{j+1}-1}  \sinc(\omega-\eta)=\sum_{\eta=2^{j}}^{2^{j+1}-1} \frac{\sin \pt{\pi\pt{\omega-\eta}}}{\pi \pt{\omega-\eta}}.                   
   \end{align*}
   If $\omega\in I_{j,+}$ then, by construction, there exists $\bar\eta\in Z_{j,+}$ such that 
   $|\omega-\bar \eta|\leq \frac{1}{2}$, hence
   \[
     \abs{ \frac{ \sin\pt{\pi (\omega-\bar{\eta}) }}{\pi \pt{\omega-\bar{\eta}}} } \leq 1.
   \]
   We can write, using trigonometric inequalities,
       \begin{align*}
       \abs{\Phi_{j,k}(\omega)}&=\abs{\sum_{\eta =2^{j}}^{2^{j+1}-1} \frac{\sin \pt{\pi (\omega-\eta)}}{\pi(\omega-\eta)} }\\   
 %                               &\leq \abs{\sum_{\eta =2^{j}}^{\bar{\eta}-1} \frac{\sin \pt{\pi (\omega-\eta)}}{\pi(\omega-\eta)} }  
 %                                +\abs{\sum_{\eta =\bar{\eta}+1}^{2^{j+1}-1} \frac{\sin \pt{\pi (\omega-\eta)}}{\pi(\omega-\eta)} }  
 %                                + \abs{ \frac{ \sin\pt{\pi (\omega-\bar{\eta}) }}{\pi \pt{\omega-\bar{\eta}}} }\\
 %                               & \leq \abs{ \sin\pt{\pi(\omega-\bar{\eta})}
 %                               \sum_{m =1}^{\bar{\eta}-2^{j}} \frac{(-1)^m}{\pi(\omega-\eta+m)} }  
 %                               +\abs{ \sin\pt{\pi(\omega-\bar{\eta})}
 %                               \sum_{m =1}^{2^{j+1}-\bar{\eta}} \frac{(-1)^m}{\pi(\omega-\eta-m)} }
 %                               +1\\
                               &
                                \leq \abs{
                               \sum_{m =1}^{\bar{\eta}-2^{j}} \frac{(-1)^m}{\pi(\omega-\eta+m)} }  
                               +\abs{
                               \sum_{m =1}^{2^{j+1}-\bar{\eta}} \frac{(-1)^m}{\pi(\omega-\eta-m)} }
                               +1.
     \end{align*}
     Since the alternate harmonic series is convergent we can obtain an uniform bound with respect to $j$
     for $\Phi_{j,+}(\omega)$, if $\omega\in I_{j,+}$. Notice that, if $\omega\not \in I_{j,+}$ 
     the above inequality still holds, actually one could improve the bound, but this is not important for our purpose.
     In order to prove \eqref{eq:con2semi}, in view of the uniform bound of $\Phi_{j,+}$ we can suppose
     $d(w,I_{j,+})>2^{j/2}$.
     Therefore
       \begin{align*}
       \abs{\Phi_{j,k}(\omega)}&=%\abs{\sum_{\eta =2^{j}}^{2^{j+1}-1} \frac{\sin \pt{\pi (\omega-\eta)}}{\pi(\omega-\eta)} }\\   
                               \abs{\sum_{m =0}^{2^{j}-1} 
                               \frac{\sin \pt{\pi \omega} (-1)^m}{\pi(\omega-m-2^{j})} }  \\
                               %& \leq \abs{ \sin \pt{\pi \omega}\sum_{m =0}^{2^{j-1}-1} 
                              %\frac{1 }{\pi(\omega-m-2^{j})}-\frac{1 }{\pi(\omega-m-1-2^{j})} }\\
                               & \leq \abs{ \sin \pt{\pi \omega}\sum_{m =0}^{2^{j-1}-1} 
                               \frac{ 1 }{\pi(\omega-m-2^{j})(\omega-m-1-2^{j})} }\\
                               %&\leq 2^{j} 
                               %\abs{\sin \pt{\pi \omega} \frac{ 1 }{\pi d(\omega, I_{j,+})^2} }\\
                               &\leq 2^{j/2} 
                               \abs{\sin \pt{\pi \omega} \frac{ 1 }{\pi d(\omega, I_{j,+})^{\frac{3}{2}}} },
     \end{align*}
     which implies \eqref{eq:con2semi} for each $s\in [0,1)$.

     With the same notation as above, notice that
     \[
       \abs{ \frac{ \sin\pt{\pi (\omega-\bar{\eta}) }}{\pi \pt{\omega-\bar{\eta}}} } \geq \frac{2}{\pi}.
     \]
     Hence
     \begin{align*}
     \abs{\Phi_{j,k}(\omega)}&\geq \frac{2}{\pi} -\abs{ \sin\pt{\pi(\omega-\bar{\eta})}\pt{\sum_{m =1}^{\bar{\eta}-2^{j}} \frac{(-1)^m}{\pi(\omega-\eta+m)} 
                               +
                               \sum_{m =1}^{2^{j+1}-\bar{\eta}} \frac{(-1)^m}{\pi(\omega-\eta-m)}} }\\
                               &\geq \frac{2}{\pi}-\frac{1}{2}>0.
     \end{align*}
 \end{proof}
 \begin{rem}
 This example is closely related to the standard Haar basis. 
 It is well known that the Haar basis is suited to represent Sobolev spaces $H^s(\R)$ 
 with $s<\frac{1}{2}$, essentially due to the lack of continuity in the time domain which is related to
 the slow decay at infinity of the function $\sinc$.
 With our approach we can get rid of this problem and reach all $s<1$, the main reason is that
 performing sums instead of dilation we are able to exploit the oscillation behavior of the $\sinc$ function and
 increase the decay rate of functions $\Phi_{j,k}$.
 \end{rem}
 %Using the same idea of Proposition \ref{prop:sinc} we state similar results for the 
 %convolution of the characteristic functions.
 %
 %Let us denote  
 %\[
 %  \varphi^{(n\ast)}= \underbrace{\varphi \ast\ldots \ast \varphi}_{n-\mbox { times}}  
 %\]
 %so that $\varphi^{0\ast}=\varphi$.
 %\begin{Prop}
 %  \label{prop:sinc}
 %  If $\varphi$ is as in \eqref{eq:sinc}, then $\varphi^{2n\ast}$ is $s$-admissible for the Sobolev seminorm
 %  for each $s\in \left[0, 2n+1 \right)$.
 %\end{Prop}
 %\begin{figure}
 %    \centering
 %    \begin{subfigure}[b]{0.4\textwidth}
 %\includegraphics[width=\textwidth]{../immagini/1D2Funzcar.png}
 %        \caption{Time domain}
 %    \end{subfigure}
 %    \begin{subfigure}[b]{0.4\textwidth}
 %        \includegraphics[width=\textwidth]{../immagini/1D2FunzcarFourier.png}
 %        \caption{Frequency domain}
 %    \end{subfigure}
 %     \begin{subfigure}[b]{0.4\textwidth}
 %\includegraphics[width=\textwidth]{../immagini/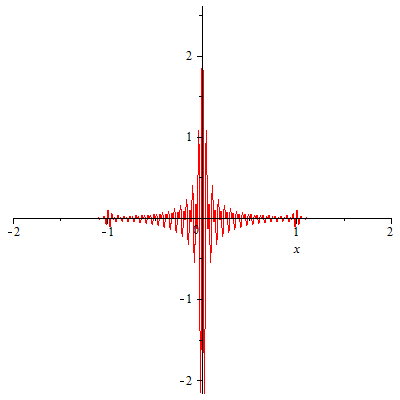}
 %        \caption{Time domain}
 %    \end{subfigure}
 %    \begin{subfigure}[b]{0.4\textwidth}
 %        \includegraphics[width=\textwidth]{../immagini/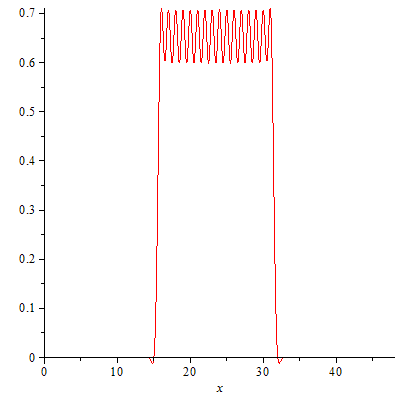}
 %        \caption{Frequency domain}
 %    \end{subfigure}
 %\caption{Convolution of $\chi_{\ptq{-\frac{1}{2},\frac{1}{2}}}$.We used $\varphi = \chi^{*3}_{\ptq{-\frac{1}{2},\frac{1}{2}}}$, for the first two figures and a convolution of $\chi_{\ptq{-\frac{1}{2},\frac{1}{2}}}$ and a Gaussian function in the last ones. The reference frequency is always $j=4$.}\label{Fig:Caratteristica}
 %\end{figure} 
 \section{Conclusion}
  In this paper we focused on the Sobolev properties of the Stockwell-like frame in arbitrary dimension
 obtaining a characterization of these spaces. 
 Although this is a very standard property that many other types of frames share, we believe that the extreme 
 flexibility of this frame opens interesting research paths.\medskip\\
 We are currently working on a generalization of the multi-dimensional partition presented in Section~\ref{sec:normHs} relaxing the condition of finitely many rotations
 that allows very different structures. For example, we can analyze a curvelet-like partition as showed 
 in Figure~\ref{fig:curvletes}. This particular tiling (both isotropic and anisotropic) offers very rich directional 
 information together with the usual parabolic scaling.\medskip\\
 It is clear from the result on the Sobolev seminorms, that this frame is very close to the wavelets one. 
 This motivates us to understand the structure behind our frame. In particular, we are analyzing a possible
 generalization of the Multi Resolution Analysis in this setting, discussing as well 
 sparsity properties.\\
 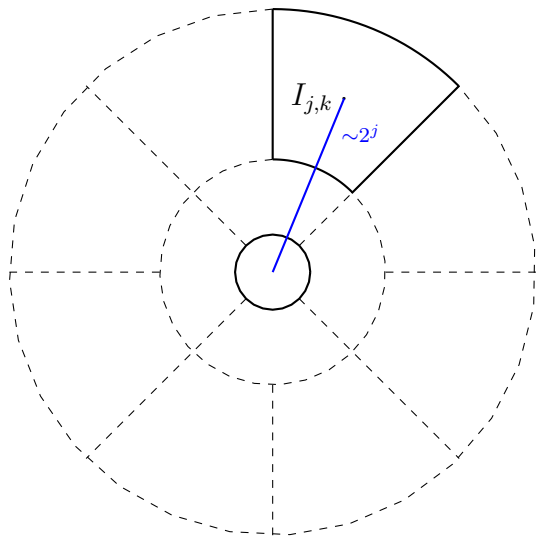
\begin{figure}
 \centering
 \begin{tikzpicture}[>=latex]
  \draw [dashed,domain=90:405] plot ({1.5*cos(\x)}, {1.5*sin(\x)});
   \draw [thick,domain=0:360] plot ({.5*cos(\x)}, {.5*sin(\x)});
   \draw [dashed,domain=90:405] plot ({3.5*cos(\x)}, {3.5*sin(\x)});
     \draw [thick,domain=45:90] plot ({1.5*cos(\x)}, {1.5*sin(\x)});
 \draw [thick,domain=45:90] plot ({3.5*cos(\x)}, {3.5*sin(\x)});
 \node at (.95,2.31) [left] {$I_{j,k}$};
 \node at (.95,2.31){.};
 \node at (.77,1.85) [right] {{\color{blue}$\scriptstyle \sim 2^j$}};
 \draw[dashed]  (.35,.35) -- (2.48,2.48);
 \draw[thick]  (1.05,1.05) -- (2.48,2.48);
 \draw[dashed]  (-.35,-.35) -- (-2.48,-2.48);
 \draw[dashed]  (.35,-.35) -- (2.48,-2.48);
 \draw[dashed]  (-.35,.35) -- (-2.48,2.48);
 \draw[thick]   (0,1.5) --(0,3.5);
 \draw[dashed]   (1.5,0) --(3.5,0);
 \draw[dashed]   (-1.5,0) --(-3.5,0);
 \draw[dashed]   (0,-1.5) --(0,-3.5);
 \draw[thick,blue]   (0,0)  --(.95,2.31) ;
 \end{tikzpicture} 
 \caption{Curvelet like decomposition of the frequency domain.}
 \label{fig:curvletes}
  \end{figure}   

 In Section~\ref{s:expls} we proposed several examples of frame windows; in particular, we considered
 the powers of the $\sinc$ function. 
 We are testing this frame numerically and the first evidences are very promising. For instance, this 
 window function provides great localization in space while being just a perturbation of the boxcar function 
 of the frequency band. These properties, usually leads to very sparse approximations.
 This frame does not provide an orthonormal basis, but it would be interesting to understand
 if it is tight.\medskip\\
 On the other hand it would be interesting to investigate the density of the Stockwell frame with 
 Gaussian window and compare it with well known results on Gabor frames. \medskip\\
 Concerning numerics,
 we are implementing a new algorithm for Stockwell-like frames that follows the work of \cite{BR16}
 and generalizes it to frames in various dimensions.
 \hspace{1em}\\
 \paragraph{\bf{Acknowledgments}}
 We thank Fabio Nicola  for the useful discussions on the subject.
 The first author is partially supported by the Research Project FIR (Futuro in Ricerca) 2013 \emph{Geometrical and qualitative aspects of PDE's}.

  \bibliography{Bib_Sob_Dost.bib}

\begin{thebibliography}{10}

\bibitem{BR16}
U.~Battisti and L.~Riba.
\newblock Window-dependent bases for efficient representations of the
  {S}tockwell transform.
\newblock {\em Appl. Comput. Harmon. Anal.}, 40(2):292--320, 2016.

\bibitem{BB15}
M.~{Berra} and U.~{Battisti}.
\newblock {Explicit examples of \$L\^{}2\$-frames associated to
  \$$\backslash$alpha\$-partitioning}.
\newblock {\em ArXiv e-prints}, Sept. 2015.

\bibitem{BN07}
L.~Borup and M.~Nielsen.
\newblock Frame decomposition of decomposition spaces.
\newblock {\em J. Fourier Anal. Appl.}, 13(1):39--70, 2007.

\bibitem{CD02}
E.~J. Cand{\`e}s and D.~L. Donoho.
\newblock New tight frames of curvelets and optimal representations of objects
  with piecewise {$C^2$} singularities.
\newblock {\em Comm. Pure Appl. Math.}, 57(2):219--266, 2004.

\bibitem{CH03}
O.~Christensen.
\newblock {\em An introduction to frames and {R}iesz bases}.
\newblock Applied and Numerical Harmonic Analysis. Birkh\"auser Boston, Inc.,
  Boston, MA, 2003.

\bibitem{DA08}
S.~Dahlke, M.~Fornasier, H.~Rauhut, G.~Steidl, and G.~Teschke.
\newblock Generalized coorbit theory, {B}anach frames, and the relation to
  {$\alpha$}-modulation spaces.
\newblock {\em Proc. Lond. Math. Soc. (3)}, 96(2):464--506, 2008.

\bibitem{DS89}
D.~L. Donoho and P.~B. Stark.
\newblock Uncertainty principles and signal recovery.
\newblock {\em SIAM J. Appl. Math.}, 49(3):906--931, 1989.

\bibitem{DR09}
S.~Drabycz, R.~Stockwell, and J.~R. Mitchell.
\newblock Image {T}exture {C}haracterization {U}sing the {D}iscrete
  {O}rthonormal {S}-{T}ransform.
\newblock 22(6):696--708, 2009.

\bibitem{FE89}
H.~G. Feichtinger.
\newblock Atomic characterizations of modulation spaces through {G}abor-type
  representations.
\newblock {\em Rocky Mountain J. Math.}, 19(1):113--125, 1989.
\newblock Constructive Function Theory---86 Conference (Edmonton, AB, 1986).

\bibitem{FG85}
H.~G. Feichtinger and P.~Gr{\"o}bner.
\newblock Banach spaces of distributions defined by decomposition methods. {I}.
\newblock {\em Math. Nachr.}, 123:97--120, 1985.

\bibitem{FE85}
H.~G. Feichtinger and P.~Gr{\"o}bner.
\newblock Banach spaces of distributions defined by decomposition methods. {I}.
\newblock {\em Math. Nachr.}, 123:97--120, 1985.

\bibitem{Fo07}
M.~Fornasier.
\newblock Banach frames for {$\alpha$}-modulation spaces.
\newblock {\em Appl. Comput. Harmon. Anal.}, 22(2):157--175, 2007.

\bibitem{GLM06}
P.~C. Gibson, M.~P. Lamoureux, and G.~F. Margrave.
\newblock Letter to the editor: {S}tockwell and wavelet transforms.
\newblock {\em J. Fourier Anal. Appl.}, 12(6):713--721, 2006.

\bibitem{GR92}
P.~Grobner.
\newblock {\em Banachraeume glatter {F}unktionen und {Z}erlegungsmethoden}.
\newblock ProQuest LLC, Ann Arbor, MI, 1992.
\newblock Thesis (Dr.natw.)--Technische Universitaet Wien (Austria).

\bibitem{WO10}
Q.~Guo, S.~Molahajloo, and M.~W. Wong.
\newblock Phases of modified {S}tockwell transforms and instantaneous
  frequencies.
\newblock {\em J. Math. Phys.}, 51(5):052101, 11, 2010.

\bibitem{kalisa1993n}
C.~Kalisa and B.~Torr\'esani.
\newblock {N}-dimensional affine {W}eyl-{H}eisenberg wavelets.
\newblock {\em Annales de l'Institut Henri Poincar\'e (A) Physique
  Th\'eorique}, 59(2):201--236, 1993.

\bibitem{mallat1999wavelet}
S.~Mallat.
\newblock {\em A wavelet tour of signal processing}.
\newblock Academic press, 1999.

\bibitem{ME92}
Y.~Meyer.
\newblock {\em Wavelets and operators}, volume~37 of {\em Cambridge Studies in
  Advanced Mathematics}.
\newblock Cambridge University Press, Cambridge, 1992.
\newblock Translated from the 1990 French original by D. H. Salinger.

\bibitem{RI14}
L.~Riba.
\newblock {\em Multi-Dimensional Stockwell Transforms and Applications}.
\newblock PhD thesis, {U}niversit\`a degli {S}tudi di {T}orino, Italy, 2014.

\bibitem{RW13}
L.~Riba and M.~W. Wong.
\newblock Continuous inversion formulas for multi-dimensional {S}tockwell
  transforms.
\newblock {\em Math. Model. Nat. Phenom.}, 8(1):215--229, 2013.

\bibitem{SCD02}
J.-L. Starck, E.~J. Cand{\`e}s, and D.~L. Donoho.
\newblock The curvelet transform for image denoising.
\newblock {\em IEEE Trans. Image Process.}, 11(6):670--684, 2002.

\bibitem{ST07}
R.~Stockwell.
\newblock A basis for efficient representation of the {S}-transform.
\newblock {\em Digital Signal Processing}, 17(1):371 --393, 2007.

\bibitem{ST96}
R.~G. Stockwell, L.~Mansinha, and R.~P. Lowe.
\newblock Localization of the complex spectrum: the {S} transform.
\newblock {\em IEEE Transactions on Signal Processing}, 44:998--1001, 1996.

\bibitem{VE08}
S.~Ventosa, C.~Simon, M.~Schimmel, J.~J. Da{\~n}obeitia, and A.~M{\`a}nuel.
\newblock The {$S$}-transform from a wavelet point of view.
\newblock {\em IEEE Trans. Signal Process.}, 56(7, part 1):2771--2780, 2008.

\bibitem{WA09}
Y.~Wang and J.~Orchard.
\newblock Fast discrete orthonormal {S}tockwell transform.
\newblock {\em SIAM Journal on Scientific Computing}, 31(5):4000--4012, 2009.

\bibitem{ZH14}
H.-L. Zhou, J.~Wang, M.-C. Wang, M.-C. Shen, X.-K. Zhang, and P.~Liang.
\newblock Amplitude spectrum compensation and phase spectrum correction of
  seismic data based on the generalized s transform.
\newblock 11(4):468--478, 2014.

\end{thebibliography}
%\nocite{*}
\bibliographystyle{abbrv}

\end{document}